\def\eee#1{ \begin{equation} #1 \end{equation} }
\def\aa#1{ \begin{align*} #1 \end{align*} }
\def\aaa#1{ \begin{align} #1 \end{align} }
\def\mm#1{ \begin{multline*} #1 \end{multline*} }
\def\mmm#1{ \begin{multline} #1 \end{multline} }
\newtheorem{thm}{\sc Theorem}
\newtheorem{lem}{\sc Lemma}
\newtheorem{cor}{\sc Corollary}
\newtheorem{rem}{\sc Remark}
\newtheorem{asm}{\sc Assumption}
\newcommand{\ts}{\textstyle}
\newcommand{\sss}{\scriptscriptstyle}
\newcommand{\pl}{\partial}
\newcommand{\gt}{\geqslant}
\newcommand{\lt}{\leqslant}
\newcommand{\te}{\theta}
\newcommand{\sub}{\subset}
\newcommand{\dl}{\delta}
\newcommand{\al}{\alpha}
\newcommand{\gm}{\gamma}
 \newcommand{\sg}{\sigma}
\newcommand{\om}{\omega}
\newcommand{\mc}{\mathcal}
\newcommand{\Om}{\Omega}
\newcommand{\td}{\tilde}
\newcommand{\ox}{\otimes}
\newcommand{\we}{\wedge}
\renewcommand{\div}{\mathrm{div} \,}
\newcommand{\s}{{\alpha}}
\newcommand{\V}{\sss\mathrm{V}}
\newcommand{\x}{\times}
\newcommand{\mto}{\mapsto}
\newcommand{\E}{\mathbb E}
\newcommand{\PP}{\mathbb P}
\newcommand{\Gr}{{G^{\,\s}_{\V}}}
\newcommand{\Alg}{T_e\Gr}
\newcommand{\C}{{\rm C}}
\newcommand{\Proj}{{\rm P \,}}
\newcommand{\ovl}{\overline}
\DeclareMathOperator{\ind}{\mathbb I}
\newcommand{\lap}{\Delta}
\newcommand{\nab}{\nabla}
\newcommand{\fdot}{\,\cdot\,}
\def\Rnu{{\mathbb R}}
\def\Znu{{\mathbb Z}}
\def\Qnu{{\mathbb Q}}
\def\Tor{{\mathbb T}^2}
\def\ffi{\varphi}
\def\com#1{}
\long\def\symbolfootnote[#1]#2{\begingroup%
\def\thefootnote{\fnsymbol{footnote}}\footnote[#1]{#2}\endgroup}
\titleformat{\section}[hang]{\large\bfseries}{\thesection.}{1ex}{}{}
\titleformat{\subsection}[hang]{\normalsize\bfseries}{\thesubsection}{2ex}{}{}
\titleformat{\subsubsection}[hang]{\small\bfseries}{\thesubsubsection}{2ex}{}{}
\begin{document}

 \author{Ana Bela Cruzeiro$^{1,2}$ and Evelina Shamarova$^{2,3}$
 }

 \date{}

 \title{ \Large
  \textbf{Navier--Stokes equations and forward-backward SDEs
   on the group of diffeomorphisms of a torus
  }
 }

 \maketitle

 \vspace{-11mm}

 {\small
  \begin{center}
  \begin{tabular}{l}
  { $^1$\! Dep. de Matem\'{a}tica, IST-UTL.}   \\
  { $^2$\! Grupo de F\'isica Matem\'atica da Universidade de Lisboa.} \\
  { $^3$\! Centro de Matem\'atica da Universidade do Porto.}  
  \end{tabular}
  \begin{tabular}{ll}
  { \hspace{2mm} E-mail addresses:}&{\href{mailto:abcruz@math.ist.utl.pt}{abcruz@math.ist.utl.pt} (A.B. Cruzeiro)}\\
 &{\href{mailto:evelina@cii.fc.ul.pt}{evelinas@fc.up.pt} (E. Shamarova)}
  \end{tabular}
 \end{center}
 }

 \vspace{3mm}

 \begin{abstract}
 We establish a connection between the strong solution
 to the spatially periodic Navier--Stokes equations
 and a solution to a system of forward-backward stochastic differential
 equations (FBSDEs) on the group of volume-preserving diffeomorphisms of
 a flat torus.
 We construct representations of the strong solution to
 the Navier--Stokes equations in terms of diffusion processes.
 \end{abstract}

\vspace{5mm}

\noindent \textit{Keywords:} Navier--Stokes equations, forward-backward SDEs, diffeomorphims group

\section{Introduction}
		The classical Navier--Stokes equations read as follows:
\aaa{
\label{ns-main}
\begin{split}
&\frac{\pl}{\pl t} u(t,x)
= - (u,\nab) u(t,x)
+ \nu \lap u(t,x) - \nab p(t,x),\\
&\mathrm{div \,} u = 0, \\
& u(0,x)= - u_0(x),
\end{split}
}
where $u_0(x)$ is a divergence-free smooth vector field.
We fix a time interval $[0,T]$, and rewrite equations
\eqref{ns-main} 
with respect to the function
\[
\td u(t,x) = -u(T-t,x).
\]
Problem \eqref{ns-main} 
is equivalent to the following:
\aaa{
\label{ns_b}
\begin{split}
& \frac{\pl}{\pl t}\td u(t,x) = - (\td u,\nab)\td u(t,x)
 -\nu \lap \td u(t,x) - \nab \td p(t,x),\\
& \mathrm{div \,} \td u = 0,\\
& \td u(T,x) = u_0(x),
\end{split}
}
where $\td p(t,x) = p(T-t,x)$.

In what follows, system
 \eqref{ns_b} 
 will be referred to as
 the backward Navier--Stokes equations.
 To this system  we associate a certain
 system of forward-backward stochastic differential
 equations on the group of volume-preserving diffeomorphisms of a flat torus.
 For simplicity, we work in two dimensions. However, the generalization
 of most of the results to the case of $n$ dimensions is straightforward.
 The necessary constructions and non-straightforward generalizations
 related to the $n$-dimensional case are considered in the appendix.

 Assuming the existence of a solution of \eqref{ns_b} 
 with the final data in the Sobolev space $H^\s$ for sufficiently large $\s$,
 we construct a solution of the associated system of FBSDEs.
 Conversely, if we assume that a solution of the system of FBSDEs exists,
 then the solution of the Navier--Stokes equations
 can be obtained from the solution of the FBSDEs.
 In fact, the constructed FBSDEs on the group of volume-preserving diffeomorphisms
 can be regarded as an alternative object
 to the Navier--Stokes equations for studying
 the properties of the latter. 

 The connection between forward-backward SDEs and quasi-linear PDEs
 in finite dimensions has been studied by many authors, for example in
 \cite{Delarue}, \cite{ma}, and \cite{pardoux99}.

 Our construction uses the approach originating in the work of Arnold \cite{arnold1}
 which states that the motion of a perfect fluid can be described in terms of geodesics
 on the group of volume-preserving diffeomorphisms of a compact manifold.
 The necessary differential-geometric structures were developed in later work
 by Ebin and Marsden \cite{ebin_marsden}. We note here that
 \cite{arnold1} and \cite{ebin_marsden} deal only with differential geometry
 on the group of maps without involving probability.

 The associated system of FBSDEs is solved using
 the existence of a solution to \eqref{ns_b}, 
 and by applying results from the works
 of Gliklikh (\cite{Gliklikh3}, \cite{Gliklikh4}, \cite{Gliklikh1}, \cite{Gliklikh2}).
 The latter works use, in turn, the approach
 to stochastic differential equations on Banach manifolds
 developed by Dalecky and Belopolskaya \cite{Belopolskaya}, and
 started by McKean \cite{mckean}. Conversely,  a solution of \eqref{ns_b}
 is obtained using the existence of a solution to the associated FBSDEs as well as
 some ideas and constructions from \cite{Delarue}. However,
 unlike \cite{Delarue}, we work in an infinite-dimensional setting.

 Representations of the Navier--Stokes velocity field as a drift of a diffusion
 process were initiated in \cite{Y} and \cite{N-Y-Z}.
 A different system of stochastic equations (but not a system of two SDEs) 
 associated to the Navier--Stokes system was introduced and studied in \cite{Belopolskaya2}.
 This system also includes an SDE on the group of volume-preserving
 diffeomorphisms, but is not a system of forward-backward SDEs. 
 Also, we mention
 here the works \cite{Belopolskaya_albeverio1} and \cite{Belopolskaya_albeverio2}
 discussing probabilistic representations of solutions to the Navier--Stokes equations,
 and the work \cite{CC} establishing a
 stochastic variational principle for the Navier--Stokes equations.
 Different probabilistic representations of the solution to the Navier--Stokes
 equations were studied for example in \cite{YLeJan} and \cite{Constantin}.
 We note that the list of literature on probabilistic approaches to
 the Navier--Stokes equations as well as connections between finite-dimensional
 FBSDEs and PDEs cited in this paper is by no means complete.

 The method of applying infinite-dimensional forward-backward SDEs
 in connection to the Navier--Stokes
 equations is employed, to the authors' knowledge, for the first time.

 \section{Geometry of the diffeomorphism group 
 of the 2D torus}
 \label{Geometry2D}
 Let $\Tor = S^1 \x S^1$ be the two-dimensional torus, and
 let $H^\s(\Tor)$, $\s > 2$, be the space of $H^\s$-Sobolev maps $\Tor \to\Tor$.
 By $G^{\s}$ we denote the subset of $H^\s(\Tor)$
 whose elements are $\C^1$-diffeomorphisms.
 Let $G^{\,\s}_{\V}$ be the subgroup of $G^{\s}$
 consisting of diffeomorphisms preserving the volume
 measure on $\Tor$.
 \begin{lem}
  \label{Glik_Sob_map}
 Let $g$ be an $H^\s$-map and a local diffeomorphism of a finite-dimensional compact
 manifold $M$, $F$ be an $H^\s$-section of the tangent bundle $TM$.
 Then, $F\circ g$ is an  $H^\s$-map.
 \end{lem}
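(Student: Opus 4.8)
The plan is to reduce the statement to a purely local composition estimate and then to prove that estimate by combining a change of variables with the chain rule and Gagliardo--Nirenberg interpolation. Since $M$ is compact, I would first fix a finite atlas $\{(U_\beta,\psi_\beta)\}$, a subordinate smooth partition of unity, and a trivialization of $TM$ over each $U_\beta$. Because $\s > n/2$ (here $n=2$, $\s>2$) the Sobolev embedding $H^\s \zz \C^0$ holds, so $g$ is continuous and the open sets $g^{-1}(U_\beta)$ cover $M$; passing to a finer atlas (a Lebesgue-number argument) I may assume that each source chart is carried by $g$ into a single target chart. In such coordinates the map $F\circ g$ is represented by $x\mapsto (g(x),\,F^i(g(x)))$, where $g$ now denotes its coordinate representative and the $F^i$ are the components of $F$ in the target trivialization, hence $H^\s$ functions. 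The base component is $g$, which is $H^\s$ by hypothesis, so everything reduces to proving that $u\circ g \in H^\s$ for a scalar $u \in H^\s$ and $g$ the coordinate representative of the local diffeomorphism.

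First I would treat the $L^2$ part by changing variables. As $\s-1 > n/2$, the embedding $H^{\s-1}\zz\C^0$ makes $Dg$ continuous, so $\det Dg$ is continuous and, on the compact set on which $g$ is a diffeomorphism onto its image, bounded away from zero. Substituting $y=g(x)$ then gives $\|u\circ g\|_{L^2} \lt C\,\|u\|_{L^2}$, with $C$ depending only on $\|(\det Dg)^{-1}\|_{\C^0}$. The same substitution shows $\|(D^j u)\circ g\|_{L^p}\lt C\,\|D^j u\|_{L^p}$ for every $p$, a fact I will use repeatedly below.

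Next I would control the top-order seminorm. For an integer order $k\lt\s$ the chain rule (Fa\`a di Bruno) writes $D^k(u\circ g)$ as a finite sum of terms $((D^j u)\circ g)\prod_{i} D^{a_i}g$, with $j$ factors of $g$ and $\sum_i a_i = k$, where $1\lt j\lt k$. The two extreme terms are harmless: for $j=1$ one has $(Du\circ g)\,D^k g$, where $Du\in H^{\s-1}\zz\C^0$ is bounded and $D^k g\in H^{\s-k}\zz L^2$; for $j=k$ one has $(D^k u\circ g)(Dg)^k$, where $(Dg)^k$ is bounded and $(D^k u)\circ g\in L^2$ by the change of variables above. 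For the intermediate terms I would distribute $L^p$ norms over the factors by the generalized H\"older inequality and estimate each factor by Gagliardo--Nirenberg interpolation between its $\C^0$ (or $\C^1$) norm and its full $H^\s$ norm; the condition $\s>n/2+1$ is exactly what makes all the resulting exponents admissible. Summing yields $\|u\circ g\|_{H^\s}\lt C(\|g\|_{H^\s})\,\|u\|_{H^\s}$, and summing over the partition of unity gives $F\circ g\in H^\s(M,TM)$. For non-integer $\s$ I would replace the top integer derivative by the Gagliardo fractional seminorm and run the same interpolation argument, equivalently interpolating between two consecutive integer estimates.

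The main obstacle will be precisely these intermediate terms. Unlike a product in the algebra $H^\s$, where one factor can always be placed in $\C^0$, such a term may contain two factors that simultaneously fail to be bounded. The prototype, in two dimensions with $\s=3$, is the third-derivative term $((D^2u)\circ g)\,Dg\,D^2g$, in which both $(D^2u)\circ g$ and $D^2g$ lie only in $H^1$, which does not embed into $\C^0$. The delicate point is that the product is nonetheless in $L^2$ because $H^1\zz L^4$ in dimension two, so the two unbounded factors can be placed in $L^4$ and paired by H\"older while the factor $Dg$ stays bounded; in higher dimensions and for larger $\s$ this role is played by the appropriate Gagliardo--Nirenberg inequalities, and checking that their exponents remain admissible under $\s>n/2+1$ is the one genuinely quantitative point of the argument.
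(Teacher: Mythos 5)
Your argument is correct and is essentially the classical proof of the $\alpha$-lemma that the paper invokes: the paper itself gives no proof, deferring to Ebin--Marsden (p.~108) and Gliklikh (p.~139), and those references proceed exactly as you do --- localize via a finite atlas refined so that $g$ carries each source chart into a target chart, reduce to the scalar composition $u\circ g$, bound the $L^2$ pieces by the change of variables $y=g(x)$ (using that $\det Dg$ is continuous and nonvanishing, hence bounded below on the compact $M$), and control the Fa\`a di Bruno terms by H\"older together with Gagliardo--Nirenberg/Sobolev embeddings, for which $\s>\frac{n}{2}+1$ is precisely the admissibility condition. Your identification of the intermediate terms (two simultaneously unbounded factors paired through $L^4$ in the two-dimensional prototype) as the only delicate point is accurate, and the exponent bookkeeping closes under the paper's standing assumption $\s>2$.
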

 \begin{proof}
 See \cite{Gliklikh1} (p. 139) or \cite{ebin_marsden} (p. 108).
 \end{proof}
 Let $R_g$ denote the right translation on $G^\s$, i.e. $R_g(\eta) = \eta\circ g$.
 \begin{lem}
 \label{lem_t1}
 The map $R_g$ is $C^\infty$-smooth for every $g\in G^\s$. 
 Furthermore, for every
 $\eta\in G^\s$, the tangent map $TR_g$ restricted to the tangent
 space $T_\eta G^\s$ is defined by the formula:
 \aa{
  TR_g: \; T_\eta G^\s \to T_{\eta\circ g} G^\s, \;  X \mto  X\circ g.
  }
 \end{lem}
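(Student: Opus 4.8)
The plan is to prove both assertions together by reducing the infinite-dimensional statement to the finite-dimensional smoothness of composition, exploiting the $\omega$-lemma (the smoothness of the composition map on Sobolev manifolds). First I would observe that right translation $R_g$ is, by definition, the restriction to $G^\s$ of the map $\eta \mapsto \eta\circ g$ on the Hilbert manifold $H^\s(\Tor)$. Since $g\in G^\s$ is a fixed $C^1$-diffeomorphism, composing an $H^\s$-map on the right with $g$ again yields an $H^\s$-map; this is precisely the content of the change-of-variables estimate that makes $R_g$ well-defined as a map $G^\s\to G^\s$. The smoothness is then exactly the statement that right composition by a fixed diffeomorphism is a $C^\infty$ operation between the Sobolev manifolds in question.

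Next I would establish $C^\infty$-smoothness. The standard route is to note that, unlike \emph{left} composition $\eta\mapsto g\circ\eta$ (which loses derivatives and is only continuous, not smooth, in this topology), right composition $\eta\mapsto\eta\circ g$ is smooth because $g$ is frozen and does not interact with the derivatives of $\eta$ in a way that costs regularity. Concretely, in local charts on $H^\s(\Tor)$ built from the exponential map, $R_g$ is represented by a linear-in-$\eta$ substitution of the argument, so its representatives are affine (hence $C^\infty$) maps between the modelling Hilbert spaces. I would cite Ebin--Marsden \cite{ebin_marsden} and Gliklikh \cite{Gliklikh1} for this fact, since the necessary differential-geometric machinery on $G^\s$ is precisely what those references supply, and Lemma \ref{Glik_Sob_map} already guarantees that the relevant compositions remain in the Sobolev class.

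For the formula for the tangent map, I would compute $TR_g$ by differentiating a curve. Take a smooth curve $t\mapsto \eta_t$ in $G^\s$ with $\eta_0=\eta$ and $\frac{d}{dt}\big|_{t=0}\eta_t = X \in T_\eta G^\s$, where $X$ is an $H^\s$-vector field along $\eta$. Then $R_g(\eta_t) = \eta_t\circ g$, and differentiating pointwise at each $x\in\Tor$ gives
\[
\frac{d}{dt}\Big|_{t=0}(\eta_t\circ g)(x) = \frac{d}{dt}\Big|_{t=0}\eta_t(g(x)) = X(g(x)) = (X\circ g)(x).
\]
Hence $TR_g(X) = X\circ g$, which lands in $T_{\eta\circ g}G^\s$ by Lemma \ref{Glik_Sob_map} applied with the vector field $X$ and the map $g$. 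This yields the claimed formula.

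The main obstacle I anticipate is the smoothness claim rather than the formula: one must be careful that right composition really is $C^\infty$ and not merely $C^0$ or $C^1$, which is a subtle point in this topology and is exactly where the asymmetry between left and right composition matters. The cleanest resolution is to lean on the chart descriptions from \cite{ebin_marsden} and \cite{Gliklikh1}, in which $R_g$ becomes an affine map in local coordinates, so infinite differentiability is immediate; the $\omega$-lemma guarantees that the estimates underlying Lemma \ref{Glik_Sob_map} persist at every order.
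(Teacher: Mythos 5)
Your argument is correct and matches the paper's: the paper disposes of this lemma by citing the $\alpha$-lemma of Ebin--Marsden and Gliklikh, and your chart-level observation (right composition with a fixed $g$ is linear, hence $C^\infty$) together with the curve differentiation giving $TR_g(X)=X\circ g$ is precisely the content of that lemma. One terminological correction: the result you are invoking is the $\alpha$-lemma, not the $\omega$-lemma --- the $\omega$-lemma governs left composition $\eta\mapsto f\circ\eta$ with a fixed map $f$ (it is what the paper uses later for the smoothness of the fields $A_k$, $B_k$) and by itself would not yield $C^\infty$-smoothness of $R_g$ for $g$ merely of class $H^\s$, whereas your actual argument via linearity in charts is the right one.
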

 \begin{proof}
 The proof easily follows from the $\al$-lemma (see \cite{ebin_marsden},
 \cite{Gliklikh1}, \cite{Gliklikh2}).
 \end{proof}
 \begin{lem}
 \label{group_lem1}
 The groups $G^\al$ and $\Gr$ are infinite-dimensional Hilbert manifolds.
 The group  $\Gr$ is a subgroup and a smooth submanifold of $G^\s$.
 \end{lem}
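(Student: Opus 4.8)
The plan is to prove the three assertions in turn, handling the last two together. First I would put the standard Hilbert-manifold structure on the manifold of maps $H^\s(\Tor,\Tor)$ and exhibit $G^\s$ as an open submanifold of it. Then I would realize $\Gr$ as the level set $\Psi^{-1}(1)$ of a map $\Psi$ measuring the failure of a diffeomorphism to preserve volume, and show that $\Psi$ is a submersion onto a closed affine subspace with splitting kernel; this makes $\Gr$ a smooth submanifold of $G^\s$, hence itself a Hilbert manifold, while the group axioms are checked directly. The differential-geometric framework is that of Ebin--Marsden \cite{ebin_marsden} and Gliklikh \cite{Gliklikh1,Gliklikh2}.

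For the manifold of maps, I would fix the flat metric on $\Tor$ and build charts from the Riemannian exponential map: near $f\in H^\s(\Tor,\Tor)$ the correspondence $V\mapsto \exp_{f(\cdot)}V(\cdot)$ identifies a neighbourhood of $f$ with an open subset of the Hilbert space $H^\s(f^*T\Tor)$ of $H^\s$-sections of the pullback bundle, the change of charts being smooth by the $\al$-lemma (Lemma \ref{Glik_Sob_map}). Since $\s>2$, the Sobolev embedding $H^\s(\Tor)\zz C^1(\Tor)$ holds, and the set of $C^1$-diffeomorphisms is open in the $C^1$-topology; hence $G^\s$ is open in $H^\s(\Tor,\Tor)$ and inherits a Hilbert-manifold structure modelled on $H^\s(\Tor,\Rnu^2)$.

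That $\Gr$ is a subgroup is immediate from the chain rule $\det D(\eta\circ\zeta)=\bigl((\det D\eta)\circ\zeta\bigr)\det D\zeta$, which shows that compositions and inverses of volume-preserving maps are volume-preserving, together with Lemmas \ref{Glik_Sob_map} and \ref{lem_t1} guaranteeing that $H^\s$-regularity survives composition. For the submanifold claim, let $\mu$ be the flat volume form and set $\Psi:G^\s\to H^{\s-1}(\Tor)$, $\Psi(\eta)=\det D\eta$; this is well defined and smooth because $\s-1>1$ makes $H^{\s-1}$ a Banach algebra. The change-of-variables formula gives $\int_\Tor\Psi(\eta)=\mathrm{vol}(\Tor)$ for every $\eta$, so $\Psi$ lands in the closed affine subspace $A=\{f\in H^{\s-1}:\int_\Tor f=\mathrm{vol}(\Tor)\}$ and $\Gr=\Psi^{-1}(1)$. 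Differentiating along a curve through $\eta$ with velocity $X\circ\eta$ yields
\[
D\Psi_\eta(X\circ\eta)=\bigl((\div X)\circ\eta\bigr)\,\det D\eta ,
\]
which on $\Gr$ reduces to $(\div X)\circ\eta$; its kernel is the right translate $\{X\circ\eta:\div X=0\}$ of the divergence-free $H^\s$ fields. Surjectivity onto $T_\mu A=\{f\in H^{\s-1}:\int_\Tor f=0\}$ and the splitting of this kernel both follow from the Helmholtz decomposition $H^\s(\Tor,\Rnu^2)=\{\div=0\}\oplus\nab(H^{\s+1})$ and elliptic regularity for $\lap=\div\nab$ on $\Tor$; the implicit function theorem then makes $\Gr$ a closed $C^\infty$-submanifold with tangent space $T_\eta\Gr=\{X\circ\eta:\div X=0\}$.

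The main obstacle is the loss of one derivative built into $\Psi$, which sends $H^\s$ into $H^{\s-1}$: one must check, first, that $\Psi$ is genuinely smooth between the Hilbert manifolds in spite of this (again via the $\al$-lemma and the Banach-algebra property valid for $\s>2$), and, second, that $\ker D\Psi_\eta$ admits a closed complement in $T_\eta G^\s$ mapped isomorphically onto the model space of $A$. Both points rest on the Helmholtz/Hodge decomposition and the elliptic estimates for the scalar Laplacian on the torus, and this is where the real content lies; the openness of $G^\s$, the group axioms, and the degree identity $\int_\Tor\det D\eta=\mathrm{vol}(\Tor)$ are routine by comparison.
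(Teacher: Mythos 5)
Your argument is correct and is precisely the standard Ebin--Marsden construction: exponential charts on the manifold of $H^\s$ maps, openness of the $C^1$-diffeomorphisms via the Sobolev embedding for $\s>2$, and the realization of $\Gr$ as a level set of the Jacobian map, with the submersion/splitting properties supplied by the Hodge decomposition and elliptic regularity. The paper gives no proof of this lemma and simply refers to \cite{ebin_marsden}, \cite{Gliklikh1}, \cite{Gliklikh2}, where exactly this argument appears, so your proposal matches the intended proof.
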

 \begin{lem}
 \label{group_lem2}
 The tangent space $T_eG^\s$ is formed by all $H^\s$-vector fields on 
 $\Tor$.
 The tangent space $T_e\Gr$ is formed by all divergence-free  
 $H^\s$-vector fields on $\Tor$.
 \end{lem}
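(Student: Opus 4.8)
The plan is to prove the two statements separately, using Lemma \ref{group_lem1}, which tells us that $G^\s$ and $\Gr$ are Hilbert manifolds with $\Gr$ a submanifold of $G^\s$. For the first statement, I would begin by noting that since $\s > 2$ the Sobolev embedding $H^\s \hookrightarrow C^1$ holds, so any $H^\s$-map close to $e$ in the $H^\s$-topology remains a $C^1$-diffeomorphism; hence $G^\s$ is an \emph{open} subset of the Hilbert manifold $H^\s(\Tor,\Tor)$ and $T_e G^\s = T_e H^\s(\Tor,\Tor)$. A tangent vector at $e$ is the velocity $X(x) = \frac{\pl}{\pl t}\big|_{t=0} g_t(x)$ of a curve $t\mapsto g_t\in G^\s$ with $g_0=e$; since $g_t(x)\in\Tor$, this velocity is a section of $T\Tor$ (which is trivial, so an $H^\s$-map $\Tor\to\Rnu^2$), and its $H^\s$-regularity is inherited from the curve. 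Conversely, using the flat exponential map on $\Tor$, every $H^\s$ vector field $X$ is realized by the curve $g_t(x) = x + tX(x)\ (\mathrm{mod}\ \Znu^2)$, which lies in $G^\s$ for small $t$ because its Jacobian $I + tDX$ is invertible, and whose velocity at $t=0$ is $X$. This identifies $T_eG^\s$ with all $H^\s$ vector fields.

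For the second statement, I would first establish the inclusion into the divergence-free fields. On the flat torus a diffeomorphism $g$ preserves the volume measure iff $\det Dg \equiv 1$, so a curve $g_t\in\Gr$ with $g_0=e$ satisfies $\det Dg_t\equiv 1$. Differentiating at $t=0$ via Jacobi's formula and using $Dg_0 = I$ gives $0 = \frac{\pl}{\pl t}\big|_{0}\det Dg_t = \tr(DX) = \div X$, so every element of $T_e\Gr$ is divergence-free.

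The reverse inclusion is where the work lies. Given a divergence-free $H^\s$ field $X$, I would take its flow $\phi_t$, solving $\frac{\pl}{\pl t}\phi_t = X\circ\phi_t$ with $\phi_0=e$; Liouville's theorem turns $\div X = 0$ into $\det D\phi_t\equiv 1$, so $\phi_t\in\Gr$, while $\frac{\pl}{\pl t}\big|_0\phi_t = X$, which shows $X\in T_e\Gr$. The main obstacle is regularity: one must verify that the flow of an $H^\s$ field stays in $H^\s$, so that $\phi_t$ genuinely lies in $G^\s$, and that $t\mapsto\phi_t$ is differentiable as a curve into the Hilbert manifold. For $\s > 2$ this follows from the $\al$-lemma and the ODE theory on $G^\s$ underlying Lemma \ref{group_lem1} (cf. Ebin--Marsden and Gliklikh). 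Alternatively, one can bypass the flow entirely: viewing $\Gr$ as the level set $\Phi^{-1}(1)$ of the Jacobian map $\Phi(g)=\det Dg$, the submanifold structure of Lemma \ref{group_lem1} yields $T_e\Gr = \ker D\Phi_e = \{X : \div X = 0\}$ and thus both inclusions at once; the content concealed in this shortcut is precisely that $\Phi$ is a submersion, which rests on the Hodge decomposition of $H^\s$ vector fields.
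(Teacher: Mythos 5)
Your argument is correct and is essentially the standard one: the paper itself gives no proof of this lemma, deferring entirely to \cite{ebin_marsden}, \cite{Gliklikh1}, \cite{Gliklikh2}, and both of your routes (differentiating $\det Dg_t\equiv 1$ plus the flow of a divergence-free field, or exhibiting $\Gr$ as a level set of the Jacobian/pullback-of-volume map whose submersion property rests on the Hodge decomposition) are exactly the arguments found in Ebin--Marsden. You also correctly flag the one genuinely delicate point, namely that the flow of an $H^\s$ field is an $H^\s$ diffeomorphism depending $C^1$ on $t$ without loss of derivatives, which the level-set formulation lets you avoid.
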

The proof of Lemmas \ref{group_lem1} and \ref{group_lem2} 
 can be found for example in \cite{ebin_marsden}, \cite{Gliklikh1},
 \cite{Gliklikh2}.
  \begin{lem}
 \label{right-invariant}
 Let $X\in T_eG^\s$ be an $H^\s$-vector field on $\Tor$.
 Then the vector field $\hat X$ on $G^\al$ defined by 
 $\hat X(g) = X\circ g$
 is right-invariant.  Furthermore,
 $\hat X$ is $\C^k$-smooth if and only if $X\in H^{\s+k}$.
 \end{lem}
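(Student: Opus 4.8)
The plan is to treat the two assertions in turn. Right-invariance is immediate: $\hat X$ is right-invariant precisely when $TR_h\,\hat X(g)=\hat X(g\circ h)$ for all $g,h\in G^\s$, and by definition $\hat X(g\circ h)=X\circ(g\circ h)=(X\circ g)\circ h$, while Lemma \ref{lem_t1} says $TR_h$ acts on $T_gG^\s$ by $Y\mto Y\circ h$, so that $TR_h\,\hat X(g)=(X\circ g)\circ h$; the two sides coincide.

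For the smoothness equivalence I would first exploit the parallelizability of the torus. Since $T\Tor\cong\Tor\x\Rnu^2$ via the coordinate frame $\pl_1,\pl_2$, there is a global trivialization $TG^\s\cong G^\s\x H^\s(\Tor,\Rnu^2)$ under which each $Y\in T_gG^\s$ is read off as its $\Rnu^2$-valued coordinate field. In this trivialization $\hat X$ becomes the composition map
\[
\Om_X\colon G^\s\to H^\s(\Tor,\Rnu^2),\qquad \Om_X(g)=X\circ g,
\]
valued in the \emph{fixed} model space, so $\hat X$ is $\C^k$ if and only if $\Om_X$ is. The analytic input is the $\om$-lemma (the $\al$-lemma already used for Lemma \ref{lem_t1}): for a field $f\in H^{\s+k}$ the map $g\mapsto f\circ g$ is $\C^k$, with $j$-th Fr\'echet derivative, $j=0,1,\dots,k$,
\[
D^j\Om_f(g)(v_1,\dots,v_j)=\bl D^jf\circ g\br(v_1,\dots,v_j),
\]
$D^jf$ being the pointwise $j$-th derivative of $f$ contracted against $v_1,\dots,v_j\in H^\s(\Tor,\Rnu^2)$; on the flat torus the chart and the frame are linear, so no connection terms appear and this formula is exact. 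As $\s>2$, the algebra $H^\s(\Tor)$ is closed under pointwise products, and with Lemma \ref{Glik_Sob_map} this gives $D^jf\circ g\in H^\s$ exactly when $D^jf\in H^\s$, that is, when $f\in H^{\s+j}$, in which case the above $j$-linear maps are bounded and continuous in $g$.

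Granting the $\om$-lemma, the \emph{if} direction is immediate: if $X\in H^{\s+k}$ then $D^jX\in H^{\s+k-j}\sub H^\s$ for $j=0,1,\dots,k$, so all derivatives of $\Om_X$ up to order $k$ exist and are continuous and $\hat X$ is $\C^k$. For the \emph{only if} direction, suppose $\Om_X$ is $\C^k$. Restricting $\Om_X$ to the two-dimensional space of constant fields and post-composing with a point evaluation $f\mto f(x_0)$, continuous by the embedding $H^\s\zz\C^0$ (valid since $\s>1$), shows first that $X$ is classically $\C^k$, so its pointwise derivatives exist. Evaluating the $k$-th Fr\'echet derivative at the identity, where $\Om_X(e)=X$, then yields
\[
D^k\Om_X(e)(v_1,\dots,v_k)(x)=D^kX(x)\bl v_1(x),\dots,v_k(x)\br,
\]
and inserting the smooth constant coordinate fields $v_j=\pl_{i_j}$ turns the right-hand side into the mixed partial $\pl_{i_1}\!\cdots\pl_{i_k}X$. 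Since $D^k\Om_X(e)$ is a bounded $k$-linear map into $H^\s$, every such partial lies in $H^\s$; ranging over all multi-indices of length $k$ gives $X\in H^{\s+k}$.

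The main obstacle is the $\om$-lemma itself: proving that $g\mapsto X\circ g$ is genuinely $\C^k$ with the stated derivatives and that precisely one Sobolev order is spent per differentiation. This is the substance of the $\al$-lemma, whose proof rests on the Banach-algebra estimates for $H^\s(\Tor)$ and the continuity of composition (Lemma \ref{Glik_Sob_map}); once it is available, both implications reduce to the single regularity count $X\in H^{\s+k}\Leftrightarrow D^jX\in H^\s$ for $j=0,1,\dots,k$ established above.
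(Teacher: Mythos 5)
Your proof is correct and follows essentially the same route as the paper: the right-invariance is exactly the one-line computation via Lemma \ref{lem_t1}, and for the smoothness equivalence the paper simply cites \cite{ebin_marsden}, whose content is precisely the $\om$-lemma argument (one Sobolev order spent per Fr\'echet derivative of $g\mto X\circ g$, with the converse obtained by evaluating derivatives at the identity along constant fields) that you reconstruct. Your honest flagging of the $\om$-lemma as the unproved analytic core matches what the paper outsources to the reference.
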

  \begin{proof}
  The first statement follows from Lemma \ref{lem_t1}. The proof of the  second
  statement can be found in \cite{ebin_marsden}.
  \end{proof}
 The vector field $\hat X$ on $G^\al$ defined in Lemma \ref{right-invariant}
 will be referred to below as the \textit{right-invariant} vector field generated 
 by $X\in T_eG^\s$.

  Let $g\in G^{\, \s}$, $X,Y \in T_e  G^{\s}$.
  Consider the
  weak $(\fdot, \fdot)_0$ and the strong $(\fdot,\fdot)_\s$
  Riemannian metrics on $G^\s$ (see   \cite{Gliklikh2}):
  \aaa{
  \label{weak}
  (\hat X(g),\hat Y(g))_0 = & \int_{\Tor} (X \circ g(\te), Y \circ g(\te)) d\te, \\
  (\hat X(g),\hat Y(g))_\s = & \int_{\Tor} (X \circ g(\te), Y \circ g(\te)) d\te \notag\\
  \label{strong}
   + &\int_{\Tor} ( (d+\dl)^\s X \circ g(\te), (d+\dl)^\s Y \circ g(\te)) d\te
  }
  where  $d$ is the differential,
  $\dl$ is the codifferential, $\hat X$ and $\hat Y$ are the right-invariant vector
  fields on $G^\s$ generated by the $H^\s$-vector fields
  $X$ and $Y$.
  Metric \eqref{weak} gives rise to the $L_2$-topology
  on the tangent spaces of $G^\s$, and metric
  \eqref{strong} gives rise to the $H^\s$-topology on
  the tangent spaces of
  $G^{\s}$ (see \cite{Gliklikh2}).
  If $g\in \Gr$, then scalar products
  \eqref{weak} and \eqref{strong} 
  do not depend on $g$. Moreover, 
  for the strong metric on $\Gr$, we have the following formula:
  \aa{
 (\hat X(g),\hat Y(g))_\s = \int_{\Tor} ( X \circ g(\te), (1+\lap)^\s Y \circ g(\te)) d\te
  }
 where $\lap = (d\dl + \dl d)$ is the Laplace-de Rham operator (see \cite{shkoller}).

 Let us introduce the notation:
 \aa{
  &\Znu_2^+ = \{(k_1,k_2)\in\Znu^2: k_1 > 0 \; \text{or} \; k_1=0, k_2>0\}; \\
  &k=(k_1,k_2)\in \Znu_2^+, \;  \bar k = (k_2, -k_1), \;  |k|=\sqrt{k_1^2 + k^2_2}, \;
  k\cdot\te = k_1\te_1+k_2\te_2, \\
  &\te = (\te_1,\te_2) \in \Tor, \; \nab = \Bigl(\frac{\pl}{\pl\te_1}, \frac{\pl}{\pl\te_2}\Bigr),\;
  (\bar k, \nab) = k_2\frac{\pl}{\pl\te_1} - k_1 \frac{\pl}{\pl\te_2}\, ,
 }
 and the vectors
 \aa{
 \begin{split}
 &\bar A_k(\te) = \frac1{|k|^{\s+1}}
 \cos(k\cdot\theta) \left(\begin{matrix} k_2 \\ - k_1 \end{matrix} \right),
 \quad
 \bar B_k(\te) =
 \frac1{|k|^{\s+1}}
 \sin(k\cdot\theta) \left(\begin{matrix} k_2 \\ - k_1 \end{matrix} \right), \\
 &\bar A_0 = \left(\begin{matrix} 1 \\  0 \end{matrix} \right),
 \quad
 \bar B_0 = \left(\begin{matrix} 0 \\  1 \end{matrix} \right).
 \end{split}
 }
  Let $\{A_k(g), B_k(g)\}_{k\in\Znu_2^+\cup \{0\}}$ be the right-invariant
  vector fields on $G^\al$
  generated by $\{\bar A_k, \bar B_k\}_{k\in\Znu_2^+ \cup \{0\}}$,
  i.e.
  \aa{
  \begin{split}
   & A_k(g)= \bar A_k \circ g, \quad B_k(g) = \bar B_k \circ g, \quad
   g\in G^\s, \\
   & A_0 = \bar A_0, \quad B_0 = \bar B_0.
   \end{split}
  }
   By $\om$-lemma
 (see \cite{Gliklikh1}), $A_k$ and $B_k$ are $\C^\infty$-smooth
 vector fields on $G^\al$.
 \begin{lem}
 \label{Gsv-hilbert}
 The vectors $A_k(g)$, $B_k(g)$, $k\in \Znu_2^+\cup \{0\}$,
 $g\in\Gr$,
 form an orthogonal basis of
 the tangent space $T_g\Gr$
 with respect to both the weak and the strong inner products
 in $T_g\Gr$.
 In particular, the vectors $\bar A_k$, $\bar B_k$,
 $k\in \Znu_2^+\cup \{0\}$,
 form an orthogonal basis of the tangent space $\Alg$.
 Moreover, the weak and the strong norms of the basis vectors are bounded
 by the same constant.
 \end{lem}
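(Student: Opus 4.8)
The plan is to reduce the statement to the identity $e$ and then settle it by Fourier analysis on $\Tor$. Since the elements of $\Gr$ preserve volume, the inner products \eqref{weak} and \eqref{strong} evaluated at $g\in\Gr$ are independent of $g$, as noted above, and by Lemma \ref{lem_t1} the right translation $TR_g\colon \Alg\to T_g\Gr$, $X\mapsto X\circ g$, is a linear isomorphism sending $\bar A_k,\bar B_k$ to $A_k(g),B_k(g)$. So the whole statement reduces to showing that $\{\bar A_k,\bar B_k\}_{k\in\Znu_2^+\cup\{0\}}$ is an orthogonal basis of $\Alg$ for both the weak and the strong inner products, with all norms bounded by a single constant; the claim for arbitrary $g\in\Gr$ then follows by transport under $TR_g$.

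First I would verify membership in $\Alg$. By Lemma \ref{group_lem2} this amounts to checking that these fields are divergence-free, which is immediate: $\div\bigl(\cos(k\cdot\te)\,\bar k\bigr)=-k_1k_2\sin(k\cdot\te)+k_1k_2\sin(k\cdot\te)=0$, and similarly for the sine field, while $\bar A_0,\bar B_0$ are constant. The crucial point for the metrics is that on the flat torus the Laplace--de Rham operator $\lap$ acts on vector fields componentwise as the scalar Laplacian, since the Weitzenb\"ock curvature term vanishes and the coordinate fields $\pl/\pl\te_i$ are parallel. Hence each basis field is an eigenvector, $(1+\lap)^\s\bar A_k=(1+|k|^2)^\s\bar A_k$ and likewise for $\bar B_k$.

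Orthogonality in the weak metric then reduces to the classical $L_2(\Tor)$-orthogonality of $\{\cos(k\cdot\te),\sin(k\cdot\te)\}_{k}$: for $k\neq l$ the relevant integrals vanish, while within a single block $\int_\Tor\cos(k\cdot\te)\sin(k\cdot\te)\,d\te=0$ gives $(\bar A_k,\bar B_k)_0=0$ (note that both fields point along the single direction $\bar k$, and $|\bar k|^2=|k|^2$). For the strong metric I would use the formula $(\bar X,\bar Y)_\s=\int_\Tor(X,(1+\lap)^\s Y)\,d\te$ together with the eigenvector property: on each pair the strong inner product equals $(1+|k|^2)^\s$ times the weak one, so strong orthogonality is inherited from the weak case. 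Completeness I would obtain from the Fourier expansion of a divergence-free field $u=\sum_k\hat u_k\,e^{ik\cdot\te}\in\Alg$: the condition $\div u=0$ forces $k\cdot\hat u_k=0$, hence $\hat u_k$ is parallel to $\bar k$; grouping the $\pm k$ terms into real cosine and sine modes writes $u$ as an $H^\s$-convergent series in the $\bar A_k,\bar B_k$ (the mean value being carried by $\bar A_0,\bar B_0$). An orthogonal system spanning a dense subspace is an orthogonal basis.

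Finally, the normalization $|k|^{-(\s+1)}$ is tuned precisely for the norm bound: $\|\bar A_k\|_0^2=C\,|k|^{-2\s}$ and, by the eigenvalue, $\|\bar A_k\|_\s^2=(1+|k|^2)^\s C\,|k|^{-2\s}\leq 2^\s C$ for $|k|\geq 1$, where $C=\int_\Tor\cos^2(k\cdot\te)\,d\te$ is constant; the same estimates hold for $\bar B_k$, and the constant fields $\bar A_0,\bar B_0$ have finite norm, so a single constant bounds both the weak and the strong norms of all basis vectors. The main obstacle is the differential-geometric fact that $\lap$ diagonalizes these fields on the flat torus; once its componentwise action is justified through the vanishing Weitzenb\"ock term, the rest is the standard trigonometric orthogonality, the Fourier characterization of divergence-free fields, and a short bookkeeping of the normalizing factors.
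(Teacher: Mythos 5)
Your proof is correct and reaches the same essential conclusion by a mildly different route. Both arguments reduce everything to showing that $\bar A_k$, $\bar B_k$ are eigenvectors of the Laplace--de Rham operator with eigenvalue $|k|^2$, after which orthogonality and the uniform norm bound follow from the classical $L_2(\Tor)$-orthogonality of the trigonometric system and the normalization $|k|^{-(\s+1)}$. Where the paper obtains the eigenvector property by the explicit Hodge computation --- observing $\dl\bar A_k=0$ from $(\bar k,\nab)\cos(k\cdot\te)=0$, so $\lap\bar A_k=\dl d\,\bar A_k=|k|^2\bar A_k$ --- you invoke the vanishing of the Weitzenb\"ock term on the flat torus to let $\lap$ act componentwise as the scalar Laplacian; both are valid, the paper's being more self-contained and yours more conceptual. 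Two further remarks. First, you use the formula $(\bar X,\bar Y)_\s=\int_\Tor(X,(1+\lap)^\s Y)\,d\te$ and get the factor $(1+|k|^2)^\s$, whereas the paper works directly from definition \eqref{strong} and gets $1+|k|^{2\s}$; these come from the two (equivalent but not literally identical) expressions the paper itself gives for the strong metric, and either yields the uniform bound, so this is harmless. Second, you supply a completeness argument (Fourier expansion of a divergence-free field forces $\hat u_k\parallel\bar k$, so the system spans a dense subspace), which the paper's proof does not spell out; this is a genuine addition that actually justifies the word ``basis'' in the statement rather than only orthogonality and boundedness.
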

 \begin{proof}
 It suffices to prove the lemma for the strong norm.
  Let us compute $\lap^\s \bar A_k$.
  Note that the vectors
  $\frac{k}{|k|}$ and $\frac{\bar k}{|k|}$
  form an orthonormal basis of $\Rnu^2$.
  Let us observe that  by the identity $(\bar k,\nab)\cos(k\cdot\te) = 0$, $\dl \bar A_k = 0$.
  Hence $d\dl \, \bar A_k = 0$ which implies $\lap \, \bar A_k = \dl d \, \bar A_k$.
  We obtain:
  \aa{
  &\bar A_k = \frac1{|k|^\s} \cos(k\cdot\te) \, \frac{\bar k}{|k|}, \\
  & d \,\bar A_k = -\frac1{|k|^{\s-1}} \sin(k\cdot\te)\, \frac{k}{|k|}
  \wedge \frac{\bar k}{|k|}, \\
  & \lap \bar A_k =
  \dl d \, \bar A_k = \frac1{|k|^{\s-2}} \cos(k\cdot\te) \, \frac{\bar k}{|k|}
   = |k|^2 \bar A_k,\\
  & \lap^\s \, \bar A_k = |k|^\s  \cos(k\cdot\te)  \, \frac{\bar k}{|k|}
  =  |k|^{2\s} \bar A_k.
  }
  This and the volume-preserving property of $g\in \Gr$ imply that
  \aa{
  &(B_m(g), A_k(g))_\s = (\bar B_m, \bar A_k)_\s
  = (1+ |k|^{2\s})(\bar B_m, \bar A_k)_{L_2} = 0, \\
  &\|A_k(g)\|^2_\s = \|\bar A_k\|^2_\s = \bigl(1 + |k|^{2\s}\bigr)
   \|\bar A_k\|^2_{L_2} \hspace{-1mm}
   = 2\pi^2  \left(|k|^{-2\s} + 1 \right)
  }
  where $\|\fdot\|_\s$ is the norm corresponding
  to the scalar product $(\fdot,\fdot)_\s$.
  Thus, $2\pi^2 \lt \|A_k(g)\|^2_\s \lt 4\pi^2$.
  Clearly, for the $\|B_k(g)\|^2_\s$ we obtain the same.
\end{proof}
It has been shown, for example, in \cite{ebin_marsden} and \cite{Gliklikh2}
 that the weak Riemannian metric has the Levi-Civita
 connection, geodesics, the exponential map, and the spray.
 Let $\bar \nab$ and $\td \nab$ denote the covariant derivatives of
 the Levi-Civita connection of the weak Riemannian
 metric \eqref{weak} on $G^\s$ and $\Gr$, respectively.
 In \cite{ebin_marsden} (see also \cite{Gliklikh2}, \cite{Gliklikh1}), 
 it has been shown that
 \aa{
  \td \nab = \Proj\circ \bar \nab
 }
where $\Proj : TG^\s \to T\Gr$ is defined in the following way:
 on each tangent space $T_gG^\s$, $\Proj = P_g$ where
 $P_g = TR_g \circ  P_e \circ  TR_{g^{-1}}$, 
 $TR_g$ and $TR_{g^{-1}}$ are tangent maps, 
 and $P_e: T_e G^\s \to \Alg$ is the projector defined by
 the Hodge decomposition.
 \begin{lem}
 \label{lem5'}
 Let $\hat U$ be the right-invariant vector
 field on $G^\s$ generated by an $H^{\s+1}$-vector field $U$ on $\Tor$,
 and let $\hat V$ be the right-invariant vector field on $G^\s$
 generated by an $H^\s$-vector field $V$ on $\Tor$.
 Then $\bar \nab_{\hat V}  \hat U$  
 is the right-invariant vector field
 on $G^\s$ generated by the $H^\s$-vector field $\nab_{V} U$ 
 on $\Tor$.
 \end{lem}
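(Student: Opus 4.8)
The plan is to reduce the computation on the infinite-dimensional group $G^\s$ to a pointwise computation on $\Tor$, using the description of the Levi-Civita connection $\bar\nab$ of the weak metric \eqref{weak} due to Ebin and Marsden (see \cite{ebin_marsden}, and also \cite{Gliklikh1}, \cite{Gliklikh2}). The content of that description is that $\bar\nab$ is induced fiberwise by the Levi-Civita connection $\nab$ of $\Tor$: for a $\C^1$-vector field $\mc Y$ on $G^\s$ and a tangent vector $W\in T_gG^\s$, the vector $(\bar\nab_W\mc Y)(g)\in T_gG^\s$ is given at each point $\te\in\Tor$ by
\[
(\bar\nab_W\mc Y)(g)(\te)=\frac{D}{dt}\Big|_{t=0}\mc Y(g_t)(\te),
\]
where $g_t$ is any curve in $G^\s$ with $g_0=g$ and $\dot g_0=W$, and $\frac{D}{dt}$ is covariant differentiation along the curve $t\mto g_t(\te)$ in $\Tor$.

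First I would check that the left-hand side of the asserted identity is meaningful. By Lemma \ref{right-invariant}, the hypothesis $U\in H^{\s+1}$ guarantees that $\hat U$ is a $\C^1$-vector field on $G^\s$, so it may legitimately be differentiated, while $\hat V(g)=V\circ g$ is a genuine element of $T_gG^\s$ because $V\in H^\s$. Next I would realize the tangent direction $\hat V(g)$ by a convenient curve: letting $\ffi_t$ denote the flow on $\Tor$ generated by $V$ and setting $g_t=\ffi_t\circ g$, one has $g_0=g$ and $\dot g_0=V\circ g=\hat V(g)$, so this curve is admissible. Substituting $\mc Y=\hat U$, $W=\hat V(g)$, and $g_t=\ffi_t\circ g$ into the formula above yields, for every $\te\in\Tor$,
\[
(\bar\nab_{\hat V}\hat U)(g)(\te)=\frac{D}{dt}\Big|_{t=0}U\bigl(\ffi_t(g(\te))\bigr)=(\nab_V U)\bigl(g(\te)\bigr),
\]
where the last equality holds because $t\mto\ffi_t(g(\te))$ is the integral curve of $V$ through $g(\te)$, along which the covariant derivative of the field $U$ is precisely $\nab_V U$. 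Since the right-hand side equals $\bigl((\nab_V U)\circ g\bigr)(\te)$, this identifies $\bar\nab_{\hat V}\hat U$ with the right-invariant vector field generated by $\nab_V U$.

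It remains to verify that $\nab_V U$ is indeed an $H^\s$-vector field, so that the right-invariant field it generates is well defined. As $U\in H^{\s+1}$, its covariant derivative $\nab U$ lies in $H^\s$; since $\s>2$, the space $H^\s(\Tor)$ is a Banach algebra, so the contraction $\nab_V U=(\nab U)V$ of the two $H^\s$-fields $\nab U$ and $V$ again belongs to $H^\s$. I expect the only real obstacle to be conceptual rather than computational: the whole argument rests on the pointwise character of $\bar\nab$, and on keeping the Sobolev bookkeeping consistent. The single derivative lost in passing from $U$ to $\nab_V U$ is exactly what both forces the hypothesis $U\in H^{\s+1}$ and, through Lemma \ref{right-invariant}, makes $\hat U$ differentiable in the first place, so the regularity assumptions are sharp for this line of proof.
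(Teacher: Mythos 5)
Your proof is correct and follows essentially the same route as the paper: the paper disposes of this lemma in one line by appealing to the right-invariance of the covariant derivative on $G^\s$ and the Ebin--Marsden/Gliklikh description of the weak-metric connection, and your argument is simply that description made explicit, with the fiberwise formula for $\bar\nab$ and the flow of $V$ supplying the curve. The Sobolev bookkeeping ($U\in H^{\s+1}$ for $\C^1$-smoothness of $\hat U$, and $\nab_V U\in H^\s$ via the Banach-algebra property of $H^\s$ for $\s>2$) is also handled correctly.
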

 \begin{lem}
 \label{lem5''}
 Let $\hat U$ be the right-invariant vector
 field on $\Gr$ generated by a divergence-free 
 $H^{\s+1}$-vector field $U$ on $\Tor$, and let 
 $\hat V$ be the right-invariant vector field on $G^\s$
 generated by a divergence-free  $H^\s$-vector field $V$ on $\Tor$.
 Then $\td \nab_{\hat V} \hat U$  is the right-invariant  vector field
 on $\Gr$ generated by the divergence-free  $H^\s$-vector
 field $P_e \nab_V U$ on $\Tor$. 
 \end{lem}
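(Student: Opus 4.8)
The plan is to combine the identity $\td \nab = \Proj\circ\bar\nab$ with Lemma \ref{lem5'}. First I would note that, because $U$ and $V$ are divergence-free, the right-invariant fields $\hat U$ and $\hat V$ on $\Gr$ are simply the restrictions to $\Gr$ of the right-invariant fields on $G^\s$ generated by $U\in H^{\s+1}$ and $V\in H^\s$: for $g\in\Gr$ one has $\hat U(g) = U\circ g \in T_g\Gr$, since $T_g\Gr = TR_g(\Alg)$ and $U\in\Alg$. Thus $\bar \nab_{\hat V}\hat U$ may be computed in $G^\s$ and only afterwards projected onto $T\Gr$. By Lemma \ref{lem5'}, $\bar \nab_{\hat V}\hat U$ is the right-invariant vector field on $G^\s$ generated by the $H^\s$-field $\nab_V U$.

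The core step is to understand how $\Proj$ transforms a right-invariant field. Let $\hat W$ be the right-invariant field generated by an arbitrary $H^\s$-field $W$, and fix $g\in\Gr$. Using Lemma \ref{lem_t1} I write $\hat W(g) = W\circ g = TR_g(W)$, and since $\Proj$ acts on $T_g G^\s$ as $P_g = TR_g\circ P_e\circ TR_{g^{-1}}$ with $TR_{g^{-1}}(W\circ g) = (W\circ g)\circ g^{-1} = W$, I obtain
\[
\Proj\bigl(\hat W(g)\bigr) = TR_g\bigl(P_e W\bigr) = (P_e W)\circ g.
\]
This is exactly the right-invariant field on $\Gr$ generated by $P_e W$. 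Taking $W = \nab_V U$ and invoking $\td \nab = \Proj\circ\bar \nab$ then yields that $\td \nab_{\hat V}\hat U$ is the right-invariant field on $\Gr$ generated by $P_e\nab_V U$, which is the assertion.

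It then remains to record two routine verifications. First, $P_e\nab_V U$ is divergence-free by the very definition of $P_e$ as the Hodge projector onto $\Alg$, and it stays in $H^\s$ because $P_e$ is an order-zero operator preserving the Sobolev scale while $\nab_V U\in H^\s$. I expect the only genuinely delicate point to be the passage in the middle paragraph: one must be sure that the conjugated operator $P_g = TR_g\, P_e\, TR_{g^{-1}}$ really is the $(\fdot,\fdot)_0$-orthogonal projection of $T_g G^\s$ onto $T_g\Gr$, so that $\Proj$ of a right-invariant field is again right-invariant. This is where the volume-preserving property of $g$ enters: right translation by $g\in\Gr$ carries $\Alg$ onto $T_g\Gr$ and preserves the weak inner product, since the metric \eqref{weak} is right-invariant on $\Gr$; hence it carries the orthogonal complement of $\Alg$ onto that of $T_g\Gr$, making $P_g$ the correct projector and the displayed identity legitimate.
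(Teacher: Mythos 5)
Your argument is correct and follows essentially the same route the paper intends: the paper disposes of Lemma \ref{lem5''} by citing the right-invariance of the covariant derivatives on $G^\s$ and $\Gr$, and your proof is precisely the unwinding of that remark via Lemma \ref{lem5'}, the identity $\td\nab=\Proj\circ\bar\nab$, and the explicit form $P_g=TR_g\circ P_e\circ TR_{g^{-1}}$, which immediately gives $\Proj(\hat W(g))=(P_eW)\circ g$. The extra verifications you record (that $P_e\nab_VU$ is divergence-free and $H^\s$, and that $P_g$ is the correct weak-metric projector onto $T_g\Gr$) are accurate and consistent with the definitions in the paper.
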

 The proofs of Lemmas \ref{lem5'} and \ref{lem5''} follow from
 the right-invariance of covariant derivatives
 on $G^\s$ and $\Gr$ (see \cite{Gliklikh2}).
 \begin{rem}\label{remark1}{\rm
 The basis $\{\bar A_k, \bar B_k\}_{k\in \Znu_2^+ \cup \{0\}}$ of $\Alg$ can be
 extended to a basis of the entire tangent space $T_eG^\s$. Indeed, let us
 introduce the vectors:
  \aa{
   \bar{\mc A_k}(\te) = \frac1{|k|^{\s+1}}\cos(k\cdot\te)
   \left(\begin{matrix} k_1 \\ k_2 \end{matrix}\right), \;
    \bar{\mc B_k}(\te) = \frac1{|k|^{\s+1}}\sin(k\cdot\te)
   \left(\begin{matrix} k_1 \\ k_2 \end{matrix}\right)
   , \; k\in \Znu^+_2.
  }
  The system $\bar A_k$, $\bar B_k$, $k\in \Znu^+_2 \cup \{0\}$,
  $\bar{\mc A_k}$, $\bar{\mc B_k}$, $k\in \Znu^+_2$, form an orthogonal basis of $T_eG^\s$.
  Further let
  $\mc A_k$ and $\mc B_k$ denote the right-invariant vector fields
  on $G^\s$ generated by $\bar{\mc A_k}$ and $\bar{\mc B_k}$.}
 \end{rem}

\section{The FBSDEs on the group of
  diffeomorphisms of the 2D torus}

 Let $h: \Tor \to \Rnu^2$
 be a divergence-free $H^{\s+1}$-vector field on $\Tor$,
 and let $\hat h$ be
 the right-invariant vector field on $G^\al$ generated by $h$.
 Further let the function
 $V(s,\fdot)$ be such that there exists a function $p: [t,T] \to H^{\s+1}(\Tor,\Rnu)$
 satisfying $V(s, \fdot) = \nab p (s,\fdot)$ for  all $s\in [t,T]$.
 For each $s\in [t,T]$,  $\hat V(s,\fdot)$  denotes 
 the right-invariant vector field on $G^\s$ generated
 by $V(s,\fdot)\in H^{\s}(\Tor,\Rnu^2)$.

 Let $E$ be a Euclidean space spanned on an orthonormal,
 relative to the scalar product in $E$, system of vectors
 $\{e_k^A, e_k^B, e_0^A, e_0^B\}_{k\in \Znu_2^+, |k|\lt N}$.
 Consider the map
 \aa{
  \sg(g) = \sum_{\substack{k\in \Znu_2^+\cup \{0\}, \\ |k|\lt N}}
     A_k(g) \otimes e_k^A + B_k(g) \otimes e_k^B, \; g\in G^\s,
 }
 i.e. $\sg(g)$ is a linear operator $E\to T_g G^\s$ for each $g\in G^\s$.

 Let $(\Om,\mc F, \PP)$ be a probability space, and
 $W_s$, $s\in [t,T]$, be an $E$-valued Brownian motion:
 \aa{
 W_s = \sum_{\substack{k\in \Znu_2^+\cup \{0\}, \\ |k|\lt N}}
 (\beta^A_k(s)e^A_k  + \beta^B_k(s)e^B_k)
 }
 where $\{\beta^A_k,\beta^B_k\}_{k\in \Znu_2^+\cup\{0\}, |k|\lt N}$ is
 a sequence of independent Brownian motions.
  We consider the following system of forward and backward SDEs:
 \eee{
 \label{fbsde-torus}
 \begin{cases}
  dZ_s^{t,e} = Y_s^{t,e} ds + \epsilon\,\sg(Z_s^{t,e}) dW_s, \\
  dY_s^{t,e} = -\hat V(s,Z_s^{t,e}) ds + X_s^{t,e}  dW_s, \\
  Z^{t,e}_t = e; \; Y_T^{t,e} = \hat h(Z_T^{t,e}).
 \end{cases}
 }
 The forward SDE  of \eqref{fbsde-torus}
 is an SDE on $\Gr$ where $\Gr$ is considered as a Hilbert manifold.
 Stochastic differentials
 and stochastic differential equations on Hilbert manifolds
 are understood in the sense of
 Dalecky and Belopolskaya's approach (see \cite{Belopolskaya}).
 More precisely, we use the results
 from \cite{Gliklikh1}
 which interprets the latter approach
 for the  particular case
 of SDEs on Hilbert manifolds.
 The stochastic integral in the forward SDE
  can be explicitly written as follows:
  \aaa{
  \label{sto_int}
  \int_t^s \sg(Z^{t,e}_r) dW_{r} = \sum_{k\in \Znu_2^+ \cup \{0\}, |k|\lt N}
  \int_t^s A_k(Z^{t,e}_r)d\beta^A_k(r) + B_k(Z^{t,e}_r)d\beta^B_k(r).
  }
Let us consider the backward SDE:
 \aaa{
 \label{bwd_sde}
 Y_s^{t,e} = \hat h(Z_T^{t,e}) + \int_s^T \hat V(r,Z^{t,e}_r) dr
 - \int_s^T X_r^{t,e} dW_r.
 }
 Note that the processes 
 $\hat V(s, Z^{t,e}_s) = V(s,\fdot) \circ Z^{t,e}_s$
 and $\hat h(Z^{t,e}_T) = h\circ Z^{t,e}_T$ are $H^\s$-maps by Lemma \ref{Glik_Sob_map}.
 Therefore, it makes sense to understand SDE \eqref{bwd_sde}
 as an SDE in the Hilbert space $H^\s(\Tor,\Rnu^2)$.
 Let $\mc F_s = \sg(W_r, r \in [0,s])$.
 We would like to find an $\mc F_s$-adapted
 triple of
 stochastic processes $(Z_s^{t,e}, Y_s^{t,e}, X_s^{t,e})$
 solving FBSDEs \eqref{fbsde-torus} in the following sense:
 at each time $s$, the process $(Z_s^{t,e}, Y_s^{t,e})$ takes values in an
 $H^\s$-section of the tangent bundle $T\Gr$.
 Namely, for each $s\in [t,T]$ and $\om \in \Om$,
 $Z_s^{t,e}\in \Gr$, $Y_s^{t,e}\in T_{Z_s^{t,e}}\Gr$. 
 Therefore, the forward SDE is well-posed on both $G^\al$ and $\Gr$, and
 can be written in the Dalecky--Belopolskaya form:
 \aa{
 dZ_s^{t,e} = \exp_{Z_s^{t,e}}&\{Y_s^{t,e} ds + \epsilon\,\sg(Z_s^{t,e}) dW_s\}\\
 &\text{or} \notag\\
 dZ_s^{t,e} =  \td{\exp}_{Z_s^{t,e}}&\{Y_s^{t,e} ds + \epsilon\,\sg(Z_s^{t,e}) dW_s\}
 }
 where $\exp$ and $\td{\exp}$ are the exponential maps of the Levi-Civita
 connection of the weak Riemannian metrics
 \eqref{weak} on $G^\s$ and resp. $\Gr$.
 Below, we will show that using either of these representations leads to the same solution
 of FBSDEs \eqref{fbsde-torus}. 
 
%

 Finally, the process $X_s^{t,e}$
 takes values in the space of linear operators $\mc L(E, H^\s(\Tor,\Rnu^2))$, i.e.
 \aaa{
 \label{process-x}
  X_s^{t,e} = \sum_{k\in\Znu_2^+\cup\{0\}, |k|\lt  N} X^{kA}_s\ox e^A_k + X^{kB}_s\ox e^B_k
 }
 where the processes $X^{kA}_s$ and  $X^{kB}_s$ take values in $H^\s(\Tor,\Rnu^2)$. 
\begin{rem}
 {\rm 
 The results obtained below also work in the situation when the
 Brownian motion $W_s$ is infinite dimensional (as in \cite{ABC_PM}). 
 Namely, when 
 $W_s = \sum_{k\in \Znu_2^+\cup\{0\}} a_k \beta_k^A \ox e^A_k + 
  b_k \beta_k^B \ox e^B_k$ where $a_k$, $b_k$, $k\in \Znu_2^+\cup\{0\}$,
  are real numbers satisfying $\sum_{k\in \Znu_2^+\cup\{0\}}|a_k|^2 + |b_k|^2 <\infty$.
  However, this requires an additional analysis on the solvability of the forward SDE
  based on the approach of Dalecky and Belopolskaya \cite{Belopolskaya}
  since the results of Gliklikh (\cite{Gliklikh3}, \cite{Gliklikh1}, \cite{Gliklikh2})
  applied below 
  are obtained for the case of a finite-dimensional Brownian motion.
 }
 \end{rem}

 \section{Constructing a solution of the FBSDEs}

 \subsection{The forward SDE}

  Let us consider the backward Navier--Stokes equations in $\Rnu^2$:
  \aaa{
  \label{ns_consider}
  \begin{split}
  & y(s,\te) = h(\te) + \int_s^T \bigl[\nab p(r,\te) +
  \bigl(y(r,\te),\nab\bigr)y(r,\te)
  + \nu \lap  y(r,\te)\bigr]dr,\\
  &\div y(s,\te) = 0
  \end{split}
   }
  where $s\in [t,T]$, $\te\in \Tor$, 
  $\lap$ and $\nab$ are the Laplacian and the gradient.
  \begin{asm}
  \label{asn1}
  Let us assume that on the interval $[t,T]$ 
  there exists a solution $\bigl(y(s,\fdot),  p(s, \fdot)\bigr)$
  to \eqref{ns_consider}  such that
  the functions $p: [t,T] \to H^{\s+1}(\Tor,\Rnu)$
  and $y: [t,T] \to H^{\s+1}(\Tor,\Rnu^2)$ are continuous. 
  \end{asm}
  Clearly, $y(s,\fdot) \in \Alg$.
  Let $\{Y^{t;kA}_s, Y^{t;kB}_s\}_{k\in\Znu_2^+ \cup \{0\}}$ be the coordinates of $y(s,\fdot)$
  with respect to the basis $\{\bar A_k, \bar B_k\}_{k\in\Znu_2^+ \cup \{0\}}$, i.e.
  \[
  y(s,\te) = \sum_{k\in\Znu_2^+ \cup \{0\}} Y^{t;kA}_s \bar A_k(\te) +  Y^{t;kB}_s \bar B_k(\te).
  \]
  Let $\hat Y_s(\fdot)$ denote the right-invariant vector field on $G^\s$
  generated by the solution $y(s,\fdot)$, i.e. $\hat Y_s(g) = y(s,\fdot) \circ g$.
  On each tangent space $T_gG^\s$, the vector $\hat Y_s(g)$
  can be represented by a series converging in the $H^\s$-topology:
  \aaa{
  \label{Y-repres}
  \hat Y_s(g) =  \sum_{k\in\Znu_2^+\cup \{0\}} Y^{t;kA}_s A_k(g) +  Y^{t;kB}_s B_k(g).
  }
  In this paragraph we will study the SDE:
  \aaa{
  \label{forward_sde}
  dZ^{t,e}_s = \hat Y_{s}(Z^{t,e}_s) ds
  + \epsilon\,\sg(Z^{t,e}_s)dW_{s}.
  }
  Later, in Theorem \ref{thm2},  we will show that the solution $Z^{t,e}_s$ to \eqref{forward_sde}
  and the process $Y^{t,e}_s = \hat Y_s(Z^{t,e}_s)$ are the first two processes in the triple
  $(Z^{t,e}_s, Y^{t,e}_s, X^{t,e}_s)$ that solves FBSDEs \eqref{fbsde-torus}.
  \begin{thm}
  \label{forward_esistence1}
  There exists
  a unique strong solution $Z^{t,e}_s$, $s\in [t,T]$,
  to \eqref{forward_sde} on $\Gr$,
  with the initial condition $Z^{t,e}_t = e$.
  \end{thm}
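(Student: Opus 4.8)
The plan is to exploit the right-invariance of all the coefficients of \eqref{forward_sde} in order to reduce the equation on the Hilbert manifold $\Gr$ to a finite-dimensional stochastic flow on the torus, and then to appeal to the existence and uniqueness theory of Gliklikh for stochastic differential equations on the diffeomorphism group. The starting observation is that the drift $\hat Y_s$ and the diffusion fields $A_k$, $B_k$ (for $|k|\lt N$) are all right-invariant: $\hat Y_s(g) = y(s,\fdot)\circ g$, $A_k(g) = \bar A_k\circ g$, $B_k(g) = \bar B_k\circ g$. Consequently, writing a candidate solution $Z^{t,e}_s$ as a random, time-dependent diffeomorphism $\te \mto Z^{t,e}_s(\te)$, equation \eqref{forward_sde} becomes, pointwise in $\te$, the finite-dimensional equation on $\Tor$
\[
d Z^{t,e}_s(\te) = y\bigl(s, Z^{t,e}_s(\te)\bigr)\, ds + \epsilon \sum_{|k|\lt N}\bigl[\bar A_k\bigl(Z^{t,e}_s(\te)\bigr)\, d\beta^A_k(s) + \bar B_k\bigl(Z^{t,e}_s(\te)\bigr)\, d\beta^B_k(s)\bigr],
\]
understood in the Dalecky--Belopolskaya sense relative to the exponential map $\td{\exp}$ of the Levi-Civita connection of the weak metric \eqref{weak} on $\Gr$.

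First I would solve this finite-dimensional equation and assemble its solutions into a flow. The generators $\bar A_k$, $\bar B_k$ are trigonometric, hence $C^\infty$ and bounded on the compact torus together with all their derivatives; by Assumption \ref{asn1} the drift $y(s,\fdot)$ lies in $H^{\s+1}(\Tor,\Rnu^2)$, which for $\s>2$ embeds continuously into $C^1(\Tor,\Rnu^2)$ and depends continuously on $s\in[t,T]$, so its $C^1$-norm is bounded uniformly on the compact interval $[t,T]$. Classical It\^o--Kunita theory then provides, for each fixed starting configuration, a unique strong solution, and the solution map $\te \mto Z^{t,e}_s(\te)$ is a flow of $C^1$-diffeomorphisms of $\Tor$. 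To lift this back to the manifold I would establish $H^\s$-regularity in $\te$ of the flow and of its inverse, so that $Z^{t,e}_s$ is a process on $G^\s$; this is the point at which the $\om$-lemma and Lemma \ref{Glik_Sob_map} enter, and where Gliklikh's existence and uniqueness theorem for SDEs on the diffeomorphism group is invoked, its hypotheses being met here: $C^\infty$ diffusion vector fields $A_k$, $B_k$ and a drift $\hat Y_s$ that is $C^1$-smooth in $g$ (Lemma \ref{right-invariant}, with $y(s,\fdot)\in H^{\s+1}$) and continuous in $s$.

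It then remains to check that the solution does not leave the subgroup $\Gr$. This follows from the fact that all the generating vector fields are divergence-free: $y(s,\fdot)\in\Alg$ by Assumption \ref{asn1}, and $\bar A_k$, $\bar B_k \in \Alg$ by Lemma \ref{Gsv-hilbert}, so every coefficient of \eqref{forward_sde} is, at each $g\in\Gr$, tangent to $\Gr$ (recall $\td\nab = \Proj\circ\bar\nab$). Writing the equation through the exponential map $\td{\exp}$ intrinsic to $\Gr$ forces the trajectory to remain on $\Gr$, and the volume-preserving property is preserved along the flow. Uniqueness on $\Gr$ follows from the uniqueness of the lifted flow together with the local Lipschitz property of the right-invariant coefficients on $G^\s$.

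The main obstacle I anticipate is twofold. First, one must make rigorous the passage between the intrinsic Dalecky--Belopolskaya equation on the manifold and the pointwise flow equation on $\Tor$, including the correction terms arising from the connection, and verify that the lifted object is genuinely an $H^\s$-valued diffusion rather than merely a continuous flow of homeomorphisms; this is where the bulk of the analytic work, and the appeal to Gliklikh's framework, resides. Second, the drift $\hat Y_s$ is only $C^1$, since $y(s,\fdot)$ gains exactly one Sobolev derivative over $\s$, so the Lipschitz estimates underlying both the $H^\s$-regularity of the flow and the uniqueness claim must be carried out at this borderline regularity and kept uniform in $s$ using the continuity of $s\mto y(s,\fdot)$ furnished by Assumption \ref{asn1}.
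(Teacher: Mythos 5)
Your route differs from the paper's. The paper does not pass through a pointwise It\^o--Kunita flow on $\Tor$ at all: it treats \eqref{forward_sde} directly as an SDE on the Hilbert manifold $\Gr$ and verifies the hypotheses of Gliklikh's abstract existence--uniqueness theorem (Theorem 13.5 of \cite{Gliklikh2}), namely (i) compatibility of the Levi-Civita connection of the weak metric with the strong metric \eqref{strong}, (ii) $\C^\infty$-smoothness and, crucially, \emph{boundedness of $\sg$ with respect to the strong $H^\s$-norm}, which is exactly what Lemma \ref{Gsv-hilbert} and the $|k|^{-(\s+1)}$ normalization of $\bar A_k,\bar B_k$ were set up to deliver, and (iii) $\C^1$-smoothness and boundedness of $\hat Y_s$. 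Your plan of reducing to a finite-dimensional flow and lifting is a legitimate alternative (it is essentially how the constant-noise case, Theorem 28.3 of \cite{Gliklikh2}, is handled), but as written it has two gaps.

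First, you assert at the outset that the Dalecky--Belopolskaya equation on $\Gr$ relative to $\td{\exp}$ ``becomes, pointwise in $\te$,'' the flat It\^o equation on the torus. That identification is not automatic: $\td{\exp}$ is the exponential of the \emph{projected} connection $\td\nab=\Proj\circ\bar\nab$, whose geodesics are not pointwise geodesics of $\Tor$, so a priori the intrinsic equation carries connection correction terms (second-order It\^o terms and the pressure-type projection). That these corrections vanish here is true but is precisely the content of the paper's Theorems \ref{existence2} and \ref{solution2} (via $A_k(g)A_k(g)\imath_{\sss V}(g)=0$ and tangency of the coefficients to $\Gr$); you flag it as ``the main obstacle'' but never close it, and your argument cannot stand without it. Second, when you invoke Gliklikh's theorem you check smoothness of $A_k,B_k$ and boundedness of $\bar A_k,\bar B_k$ on the compact torus, i.e.\ a weak-metric bound; the hypothesis actually needed when the noise runs along the non-constant fields $A_k,B_k$ is their uniform boundedness in the \emph{strong} norm $\|\cdot\|_\s$, which does not follow from compactness of $\Tor$ and is the substantive point of Lemma \ref{Gsv-hilbert}. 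Relatedly, your claim that ``the volume-preserving property is preserved along the flow'' needs an argument (e.g.\ the Stratonovich form of the equation with divergence-free coefficients, cf.\ Lemma \ref{lem5}); it is not a consequence of the coefficients merely being tangent to $\Gr$ unless you have already established that the intrinsic equation on $\Gr$ is the one being solved.
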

  \begin{proof}
  Below, we verify the assumptions of Theorem 13.5 of \cite{Gliklikh2}.
  The latter theorem will imply the existence and uniqueness of the strong
  solution to \eqref{forward_sde}. Note that,
  if sum \eqref{sto_int} representing the stochastic integral
  $\int_t^s\sg(Z^{t,e}_s)\, dW_s$ contains only the terms $A_0 (\beta^A_0(s)-\beta^A_0(t))$  and
  $B_0 (\beta^B_0(s)- \beta^B_0(t))$,
  i.e., informally speaking, if
  the Brownian motion runs only along the constant vectors $A_0$ and $B_0$,
  then the statement of the theorem follows from Theorem 28.3 of \cite{Gliklikh2}.
  If sum \eqref{sto_int} contains also terms
   with $A_k$ and $B_k$, $k\in\Znu_2^+$,
  or, informally, when the Brownian motion runs also along 
  non-constant vectors
  $A_k$ and $B_k$, $k\in\Znu_2^+$,
  then the assumptions of Theorem 13.5 of \cite{Gliklikh2}
  require the boundedness of $A_k$ and $B_k$ with respect
  to the strong norm. The latter fact holds by Lemma
  \ref{Gsv-hilbert}.

  Hence, all the assumptions of Theorem 13.5 of \cite{Gliklikh2}
  are satisfied. Indeed,
  the proof of Theorem 28.3 of \cite{Gliklikh2} shows
  that the Levi-Civita connection of the weak Riemannian metric
  \eqref{weak} on $\Gr$ is compatible (see Definition 13.7 of \cite{Gliklikh2})
  with the strong Riemannian metric \eqref{strong}.
   The function $\sg(g) = \sum_{k\in \Znu_2^+\cup \{0\}, |k|\lt N} A_k(g) \ox e_k^A + B_k(g) \ox e_k^B$
  is $\C^\infty$-smooth since $A_k$ and $B_k$ are $\C^\infty$-smooth. 
  Moreover, by Lemma \ref{Gsv-hilbert}, $\sg(g)$ is bounded on $\Gr$.
  Next, since $y: [t,T] \to H^{\s+1}(\Tor,\Rnu^2)$ is continuous, then it is also bounded
 with respect to (at least) the $H^\s$-norm. 
 Hence, the generated right-invariant
  vector field $\hat Y_s(g)$ is bounded in $s$ 
  with respect to the strong metric \eqref{strong},
  and it is at least $\C^1$-smooth in $g$.
 The boundedness of $\hat Y_s$ in $g$ follows from the volume-preserving
  property of $g$. 
  \end{proof}
  \begin{thm}
  \label{existence2}
  There exists a unique strong solution $Z^{t,e}_s$, $s\in [t,T]$,
  to \eqref{forward_sde} on $G^\s$,
  with the initial condition $Z^{t,e}_t = e$. This solution coincides
  with the solution to SDE \eqref{forward_sde} on $\Gr$.
  \end{thm}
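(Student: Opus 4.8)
The plan is to reduce the equation on $G^\s$ to the one already solved on $\Gr$ in Theorem \ref{forward_esistence1}, by showing that the second-order term through which the Levi-Civita connection enters the Belopolskaya--Dalecky form of \eqref{forward_sde} vanishes identically. When \eqref{forward_sde} is written as $dZ^{t,e}_s = \exp_{Z^{t,e}_s}\{\hat Y_s(Z^{t,e}_s)\,ds + \epsilon\,\sg(Z^{t,e}_s)\,dW_s\}$, its passage to an Itô equation in a chart produces, besides the drift $\hat Y_s$ and the diffusion $\sg$, a quadratic correction proportional to $\sum_{|k|\lt N}\bigl(\bar \nab_{A_k}A_k + \bar \nab_{B_k}B_k\bigr)$, the trace of the connection along the diffusion directions; the same holds on $\Gr$ with $\bar \nab$, $\exp$ replaced by $\td \nab$, $\td{\exp}$. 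I would first show that all these correction terms are zero, so that the two representations coincide and reduce to a single equation whose coefficients are tangent to $\Gr$.

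The key computation is at the Lie-algebra level. For $k\in\Znu_2^+$ the field $\bar A_k = |k|^{-\s-1}\cos(k\cdot\te)\,(k_2,-k_1)$ points in the constant direction $\bar k=(k_2,-k_1)$ and depends on $\te$ only through $\cos(k\cdot\te)$, so
\[
\nab_{\bar A_k}\bar A_k = \frac{1}{|k|^{2(\s+1)}}\cos(k\cdot\te)\,\bigl[(\bar k,\nab)\cos(k\cdot\te)\bigr]\,\bar k = 0,
\]
since $(\bar k,\nab)\cos(k\cdot\te)=0$ by $\bar k\perp k$ (the identity already used in the proof of Lemma \ref{Gsv-hilbert}); the same holds for $\bar B_k$, and trivially for the constant fields $\bar A_0,\bar B_0$. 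By Lemma \ref{lem5'} the right-invariant field $\bar \nab_{A_k}A_k$ is generated by $\nab_{\bar A_k}\bar A_k=0$, hence $\bar \nab_{A_k}A_k=\bar \nab_{B_k}B_k=0$ on $G^\s$; by Lemma \ref{lem5''} the corresponding fields on $\Gr$ are generated by $P_e\,\nab_{\bar A_k}\bar A_k=0$ and so vanish as well. Thus the connection contributes nothing, the $\exp$- and $\td{\exp}$-forms of \eqref{forward_sde} coincide, and in each chart \eqref{forward_sde} is an Itô equation with drift $\hat Y_s$ and diffusion $\sg$ and no further correction.

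With this in hand I would establish existence on $G^\s$ by confinement. Both the drift $\hat Y_s(g)=y(s,\fdot)\circ g$, generated by the divergence-free field $y(s,\fdot)$ of Assumption \ref{asn1}, and every column $A_k, B_k$ of $\sg$, generated by the divergence-free fields $\bar A_k,\bar B_k$ (recall $\dl\bar A_k=0$), are tangent to $\Gr$; since the only term of the $G^\s$-equation that could carry a component transversal to $\Gr$ is the now-vanishing connection correction, the equation leaves $\Gr$ invariant. Consequently the solution $Z^{t,e}_s$ produced by Theorem \ref{forward_esistence1}, viewed as a process in $G^\s$, solves \eqref{forward_sde} on $G^\s$ with $Z^{t,e}_t=e$. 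Uniqueness on $G^\s$ follows the same way: any solution of the $G^\s$-equation starting at $e\in\Gr$ must, by the same invariance, remain on $\Gr$ and hence solve the $\Gr$-equation, so it coincides with $Z^{t,e}_s$ by the uniqueness part of Theorem \ref{forward_esistence1}. This simultaneously yields the asserted coincidence of the two solutions.

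The main obstacle is to make the invariance and the ``no correction'' reduction rigorous in the Belopolskaya--Dalecky setting on the infinite-dimensional Hilbert manifold $G^\s$, rather than appealing to finite-dimensional intuition. In particular, Theorem 13.5 of \cite{Gliklikh2} cannot be invoked directly on $G^\s$ to produce a global solution: unlike on $\Gr$, the strong norms of $A_k(g)=\bar A_k\circ g$ and of $\hat Y_s(g)$ are not bounded in $g$, because composition with a general, non volume-preserving $g\in G^\s$ raises the Sobolev norm. This is precisely why the argument must run through the already-constructed $\Gr$-solution and the confinement of any $G^\s$-solution to $\Gr$, so that all quantities are evaluated where Lemma \ref{Gsv-hilbert} guarantees the required bounds.
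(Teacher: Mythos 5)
Your existence half is sound and in fact supplies the computation that the paper leaves implicit: the paper obtains existence on $G^\s$ by pushing the $\Gr$-solution forward through the identical embedding $\imath:\Gr\to G^\s$ via Proposition 1.3 of \cite{Belopolskaya}, and the reason this produces \eqref{forward_sde} on $G^\s$ with no extra drift term is exactly your observation that $\bar\nab_{A_k}A_k=\td\nab_{A_k}A_k=0$, i.e.\ the second fundamental form of $\Gr$ in $G^\s$ vanishes on the diffusion directions. So for existence you and the paper are doing essentially the same thing, with you spelling out the geometric cancellation.

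The gap is in your uniqueness argument. You reduce uniqueness on $G^\s$ to uniqueness on $\Gr$ by claiming that any $G^\s$-solution started at $e$ is confined to $\Gr$, but you never prove this stochastic invariance; you only observe that the coefficients are tangent to $\Gr$ at points of $\Gr$ and that the connection corrections vanish there. That is the right heuristic, but upgrading it to a viability theorem for a Belopolskaya--Dalecky SDE on an infinite-dimensional Hilbert manifold is precisely the hard step, and you flag it yourself as ``the main obstacle'' without resolving it. Worse, a hypothetical second solution a priori wanders in $G^\s\setminus\Gr$, where (as you correctly note) the strong norms of $A_k(g)=\bar A_k\circ g$ and of $\hat Y_s(g)$ are uncontrolled, so you cannot even localize the argument near $\Gr$. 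The paper avoids invariance altogether: it proves uniqueness directly on $G^\s$ by invoking the uniqueness theorem for SDEs on the manifold $G^\al$ equipped with the \emph{weak} Riemannian metric \eqref{weak}. With respect to that metric the coefficients are globally bounded on all of $G^\s$ --- $\|\bar A_k\circ g\|_{L_2}$ and $\|y(s,\fdot)\circ g\|_{L_2}$ are controlled by the sup-norms of $\bar A_k$ and $y$ on $[t,T]\times\Tor$ whether or not $g$ preserves volume, and $\sg$, $\hat Y_s$ retain the required smoothness --- which is exactly the escape route you overlooked when you concluded that the only viable path runs through confinement to $\Gr$.
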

  \begin{proof}
  Consider the identical embedding $\imath: \Gr \to G^\s$.
  By results of \cite{Belopolskaya} (Proposition 1.3, p. 146; see also \cite{Gliklikh2}, p. 64), 
  the stochastic process $\imath(Z^{t,e}_s)= Z^{t,e}_s$, $s\in [t,T]$, is a solution
  to SDE \eqref{forward_sde} on $G^\s$, i.e. 
  with respect to the exponential map $\exp$. 
  This easily follows from the fact that $T\imath: T\Gr \to TG^\s$, 
  where $T$ is the tangent map, is the identical imdedding,
  and that $\imath\bigl(\exp(X)\bigr) = \td{\exp}(T\imath \circ X)$. 
  The solution $Z^{t,e}_s$ to \eqref{forward_sde} on $G^\s$  is unique.
  This follows from the uniqueness theorem
  for SDE \eqref{forward_sde} considered on the manifold
  $G^\al$ equipped with the weak Riemannian metric. 
  Indeed, $\sg(g)$ and $\hat Y_s(g)$ are bounded
  with respect to the weak metric \eqref{weak} since
  the functions $\bar A_k$, $\bar B_k$, $k\in \Znu_2^+\cup \{0\}$,
  are bounded on $\Tor$,
  and $y(\fdot,\fdot)$ is bounded on $[t,T]\x \Tor$.
  Moreover $\sg(g)$ is $\C^\infty$-smooth
  and $\hat Y_s$ is at least $\C^1$-smooth on $G^\al$.
  \end{proof}
  One can also consider \eqref{forward_sde} as an
  SDE with values in the Hilbert space $H^\s(\Tor,\Rnu^2)$. 
  \begin{thm}
  \label{solution2}
  There exists a unique strong solution 
  $Z^{t,e}_s$ to the  $H^\s(\Tor,\Rnu^2)$-valued SDE
  \eqref{forward_sde} on $[t,T]$,
  with the initial condition $Z^{t,e}_t = e$ where $e$ is the identity
  of $\Gr$.
  This solution coincides
  with the solution to SDE \eqref{forward_sde} on $\Gr$ or $G^\s$.
  \end{thm}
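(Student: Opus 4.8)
The plan is to read \eqref{forward_sde} literally as an It\^o equation in the flat Hilbert space $H^\s(\Tor,\Rnu^2)$, driven by the \emph{finite}-dimensional Brownian motion $W_s$ (recall that the sum runs over $|k|\lt N$), with drift $g\mto \hat Y_s(g) = y(s,\fdot)\circ g$ and diffusion $g\mto \sg(g) = \sum_{|k|\lt N}(\bar A_k\circ g)\ox e_k^A + (\bar B_k\circ g)\ox e_k^B$. Since $y(s,\fdot)$, $\bar A_k$ and $\bar B_k$ are all smooth periodic maps, the coefficients extend to genuine maps $H^\s(\Tor,\Rnu^2)\to H^\s(\Tor,\Rnu^2)$ by precomposition, and the first task is to record their regularity: by the $\om$-lemma (\cite{Gliklikh1}) each composition operator $g\mto F\circ g$ is $\C^\infty$, hence locally Lipschitz, in the $H^\s$-topology, while Lemma \ref{Gsv-hilbert} together with the continuity of $y:[t,T]\to H^{\s+1}(\Tor,\Rnu^2)$ from Assumption \ref{asn1} bounds them uniformly in $s$ on the set of volume-preserving maps.

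First I would establish existence by exhibiting the already-constructed manifold solution as a solution of the flat equation. The key observation is that the Levi-Civita connection $\bar\nab$ of the weak metric \eqref{weak} on $G^\s$ is, by Lemma \ref{lem5'}, generated by the flat directional derivative $\nab_V U = (V,\nab)U$ on $\Tor$; its geodesics are the pointwise straight lines $g_\tau(\te) = g(\te) + \tau X(\te)$, so the exponential map is affine, $\exp_g(X) = g + X$. Consequently the Dalecky--Belopolskaya equation $dZ_s = \exp_{Z_s}\{\hat Y_s(Z_s)\,ds + \eps\,\sg(Z_s)\,dW_s\}$ on $G^\s$ carries no second-order (Christoffel) correction when written in the ambient linear coordinates of $H^\s(\Tor,\Rnu^2)$, and therefore reduces verbatim to the flat It\^o equation \eqref{forward_sde}. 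Thus the solution on $G^\s$ produced in Theorem \ref{existence2}, viewed in $H^\s(\Tor,\Rnu^2)$, already solves the Hilbert-space equation.

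Next I would prove uniqueness directly in $H^\s(\Tor,\Rnu^2)$. Given two solutions, I would localize by stopping times that keep both processes in a ball of $H^\s(\Tor,\Rnu^2)$ on which the composition operators are Lipschitz, apply the It\^o isometry to the finite sum \eqref{sto_int} and a Burkholder--Davis--Gundy estimate to control the martingale part, and close a Gronwall estimate for $\E\,\sup_{r}\|Z^1_r - Z^2_r\|_\s^2$; letting the localizing times increase to $T$ yields pathwise uniqueness, and hence a unique strong solution. Combined with Theorem \ref{existence2}, this identifies the $H^\s$-valued solution with the solutions on $\Gr$ and on $G^\s$.

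The step I expect to be the main obstacle is the justification that the exponential-map formulation collapses to the naive It\^o equation, that is, that the weak-metric connection on $G^\s$ is genuinely flat in the ambient coordinates so that no correction term survives; this is precisely what forces the three notions of solution to agree. Once it is in place, the regularity input from the $\om$-lemma and Lemma \ref{Gsv-hilbert} makes the Hilbert-space existence and uniqueness argument routine, the only mild technical point being the localization needed because the composition operators are merely locally, and not globally, Lipschitz on $H^\s(\Tor,\Rnu^2)$.
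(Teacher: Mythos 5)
Your proposal is correct, but it reaches the conclusion by a genuinely different route from the paper's. For existence, the paper applies It\^o's formula directly to the identical embedding $\imath_{\sss V}:\Gr\to H^\s(\Tor,\Rnu^2)$ of the $\Gr$-valued solution from Theorem \ref{forward_esistence1}: the flat equation drops out because $A_k(g)\imath_{\sss V}(g)=A_k(g)$ while the second-order terms $A_k(g)A_k(g)\imath_{\sss V}(g)=(\nab_{\bar A_k}\bar A_k)\circ g$ vanish thanks to the identity $(\bar k,\nab)\cos(k\cdot\te)=0$. You instead pass through the $G^\s$-solution of Theorem \ref{existence2} and argue that the weak-metric exponential on the full group is the pointwise-affine map $\exp_g(X)=g+X$ (the standard Ebin--Marsden fact that $L_2$-geodesics on $G^\s$ are pointwise geodesics of the flat torus), so the Dalecky--Belopolskaya equation carries no correction in ambient linear coordinates. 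Both arguments amount to the vanishing of the same second-order term, but yours rests on the flatness of the $L_2$-connection on $G^\s$ rather than on the special structure of the noise fields $\bar A_k,\bar B_k$; that is slightly more robust (it would survive a change of driving frame), at the cost of having to justify that the Dalecky--Belopolskaya solution concept in an affine chart really is the naive It\^o equation. For uniqueness the divergence is larger: the paper simply invokes the uniqueness theorem for Hilbert-space SDEs in $L_2(\Tor,\Rnu^2)$, where the composition operators $g\mto y(s,\fdot)\circ g$ and $g\mto\bar A_k\circ g$ are \emph{globally} Lipschitz because $y(s,\fdot)$ and $\bar A_k$ have bounded derivatives, and then observes that uniqueness in the larger space $L_2$ implies uniqueness in $H^\s$. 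Your localized Gronwall argument in $H^\s$ also works, but it is heavier: composition is only locally Lipschitz in the $H^\s$-topology, and is even awkward to define on all of $H^\s(\Tor,\Rnu^2)$ since Lemma \ref{Glik_Sob_map} needs $g$ to be a local diffeomorphism and $y(s,\fdot)$ is only $H^{\s+1}$, so you must localize both in an $H^\s$-ball and near the diffeomorphism group. The paper's trick of descending to the weaker $L_2$-norm sidesteps all of this.
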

  \begin{proof}
  By Theorem \ref{forward_esistence1}, SDE \eqref{forward_sde}
  on $\Gr$ has a unique strong solution $Z^{t,e}_s$ on $[t,T]$.
  Let us prove that the solution $Z^{t,e}_s$ to \eqref{forward_sde}
  solves this SDE considered as an 
  SDE in $H^\s(\Tor,\Rnu^2)$.
  Consider the identical imbedding
  $\imath_{\sss V}:\, \Gr \to H^\s(\Tor,\Rnu^2)$, $g \mto g$. 
  Applying It\^o's formula to $\imath_{\sss V}$,
  and taking into account that 
  $A_k(g) \imath_{\sss V}(g) = \nab_{\bar A_k}\te \circ g = A_k(g)$
 and that $A_k(g)A_k(g) \imath_{\sss V}(g) = A_k(g)A_k(g) = 0$,
 we obtain that the solution $Z^{t,e}_s$ to \eqref{forward_sde}
  on $\Gr$ solves the $H^\s(\Tor,\Rnu^2)$-valued
  SDE \eqref{forward_sde}. 
  Note that by the uniqueness theorem for SDEs in Hilbert spaces,
  SDE \eqref{forward_sde} can have only one solution in $L_2(\Tor,\Rnu^2)$.
  This proves the uniqueness of its solution in $H^\s(\Tor,\Rnu^2)$ as well.
  Thus the solutions to
  \eqref{forward_sde} on $G^\s$, $\Gr$, and in $H^\s(\Tor,\Rnu^2)$ coincide.
  \end{proof}

Let us find the representations of SDE \eqref{forward_sde} in normal coordinates
on $G^\s$ and $\Gr$.
  First, we prove the following lemma.
  \begin{lem}
  \label{lem5}
  The following equality holds:
  \aa{
  \int_t^{s}\sg(Z^{t,e}_r)\circ dW_r = \int_t^{s}\sg(Z^{t,e}_r)dW_r, 
  }
  i.e. instead of the It\^{o} stochastic integral in \eqref{forward_sde}
  we can write
  the Stratonovich stochastic integral $\int_t^s \sg(Z^{t,e}_r)\circ dW_r$.
  \end{lem}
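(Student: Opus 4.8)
The plan is to work with the Hilbert-space interpretation of \eqref{forward_sde} furnished by Theorem \ref{solution2}, so that both integrals are ordinary $H^\s(\Tor,\Rnu^2)$-valued stochastic integrals, and then apply the classical It\^o--Stratonovich conversion formula. In that framework the two integrals differ by a quadratic-variation correction, and the entire content of the lemma is that this correction vanishes. Concretely, since $Z^{t,e}_\cdot$ solves an It\^o SDE with diffusion coefficient $\epsilon\,\sg(\fdot)$, I would write
\[
\int_t^s \sg(Z^{t,e}_r)\circ dW_r - \int_t^s \sg(Z^{t,e}_r)\, dW_r
= \frac{\epsilon}{2}\!\!\sum_{k\in\Znu_2^+\cup\{0\},\,|k|\lt N}\!\!\int_t^s \bigl[(D\bar A_k\cdot\bar A_k) + (D\bar B_k\cdot\bar B_k)\bigr]\circ Z^{t,e}_r\, dr,
\]
where $D\bar A_k$ is the Jacobian of the map $\bar A_k:\Tor\to\Rnu^2$ and $D\bar A_k\cdot\bar A_k$ is its directional derivative along $\bar A_k$ itself (similarly for $\bar B_k$). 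This term arises because the martingale part of $d(\bar A_k\circ Z^{t,e}_\cdot)$ is $\sum_m (D\bar A_k\circ Z^{t,e})[A_m(Z^{t,e})]\,d\beta^A_m + \cdots$, so by independence of the driving Brownian motions the only surviving cross-variation with $\beta^A_k$ is $(D\bar A_k\circ Z^{t,e})[A_k(Z^{t,e})]=(D\bar A_k\cdot\bar A_k)\circ Z^{t,e}$; the drift $\hat Y_s$ contributes nothing to the quadratic variation.

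The key step is then the algebraic observation that each summand vanishes identically on $\Tor$. Writing $\bar A_k(\te)=|k|^{-(\s+1)}\cos(k\cdot\te)\,\bar k$ with $\bar k=(k_2,-k_1)$, differentiating $\cos(k\cdot\te)$ produces the factor $k$ while the constant direction $\bar k$ is unaffected, so $D\bar A_k(\te)=-|k|^{-(\s+1)}\sin(k\cdot\te)\,\bar k\otimes k$. Hence
\[
(D\bar A_k\cdot\bar A_k)(\te)=-|k|^{-2(\s+1)}\sin(k\cdot\te)\cos(k\cdot\te)\,(k\cdot\bar k)\,\bar k=0,
\]
because $k\cdot\bar k=k_1k_2-k_2k_1=0$. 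The identical computation with $\sin$ and $\cos$ interchanged gives $D\bar B_k\cdot\bar B_k=0$, and for $k=0$ the fields $\bar A_0,\bar B_0$ are constant, so their Jacobians vanish trivially. Thus every term of the correction is zero and the two integrals coincide.

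I expect the algebra to be routine; the main thing to get right is the bookkeeping of the conversion formula in the present setting. One must justify that the Stratonovich integral along the embedded solution $Z^{t,e}_\cdot\in H^\s(\Tor,\Rnu^2)$ is governed by the standard Hilbert-space correction term, that only the ``diagonal'' cross-variations (mode $k$ paired with $\beta^A_k$ or $\beta^B_k$) survive, and that no extra connection-dependent term intrudes beyond what Theorem \ref{solution2} already controls. Since the sum over $k$ is finite ($|k|\lt N$) there are no convergence issues. In the end the lemma reduces entirely to the orthogonality $k\perp\bar k$, which is the geometric reason the Stratonovich correction disappears.
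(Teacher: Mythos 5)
Your proposal is correct and follows essentially the same route as the paper: both reduce the Stratonovich--It\^o difference to the diagonal cross-variations $d(\bar A_k\circ Z^{t,e})\,d\beta^A_k$ (the off-diagonal ones dying by independence and the drift contributing nothing), and both kill each surviving term via the derivative of $\bar A_k$ along itself, which vanishes because $(\bar k,\nab)\cos(k\cdot\te)=-\sin(k\cdot\te)\,(k\cdot\bar k)=0$, i.e.\ $k\perp\bar k$. The explicit Jacobian formula $D\bar A_k\cdot\bar A_k$ in your write-up is just the paper's $A_k(g)[\bar A_k\circ g]$ spelled out in coordinates, so the two arguments coincide.
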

 \begin{proof}
 We have:
 \aa{
 \sg(Z^{t,e}_r)\circ dW_{r} = \sg(Z^{t,e}_r) dW_{r}
 + \hspace{-3mm}
 \sum_{k\in \Znu_2^+ \cup \{0\}, |k|\lt N} \hspace{-3mm}
  dA_k(Z^{t,e}_r)d\beta^A_k(r) + dB_k(Z^{t,e}_r)d\beta^B_k(r).
 }
 Hence, we have to prove that $dA_k(Z^{t,e}_r)d\beta^A_k(r) = 0$
 and $dB_k(Z^{t,e}_r)d\beta^B_k(r)= 0$.
 For simplicity of notation we use the
 notation $A_\nu$ for both of the vector fields
 $A_k$ and $B_k$ and the notation $\bar A_\nu$ for $\bar A_k$ and $\bar B_k$,
 $k\in\Znu^+_2\cup \{0\}$.
 Also, we use the notation
 $\beta_\nu(s)$ for the Brownian motions
 $\{\beta^A_k(s),\beta^B_k(s)\}_{k\in\Znu^+_2\cup\{0\}, |k|\lt N}$.
 We obtain:
 \aa{
 d(\bar A_\nu \circ Z^{t,e}_s) =
 \sum_\gm A_\gm(Z^{t,e}_s)\bigl(\bar A_\nu\circ Z^{t,e}_s \bigr)\circ d\beta_\gm(s)
 + Y^{t,e}_s\bigl(\bar A_\nu\circ Z^{t,e}_s \bigr) dt.
 }
 This implies
 \aa{
 d(\bar A_\nu \circ Z^{t,e}_s)\cdot d\beta_\nu =
 A_\nu(Z^{t,e}_s)\bigl(\bar A_\nu\circ Z^{t,e}_s \bigr)ds = 0
 }
 which holds
 by the identity $(\bar k,\nab)\cos(k\cdot\te) = (\bar k,\nab)\sin(k\cdot\te)= 0$
 or by differentiating of constant vector fields.
 \end{proof}
Let $\bar Z^{t}_s = \{Z_s^{t;kA}, Z_s^{t;kB}\}_{k\in \Znu_2^+ \cup \{0\}}$
 be the vector of local coordinates of the solution $Z^{t,e}_s$
 to \eqref{forward_sde} on $\Gr$, i.e. the vector of normal
 coordinates provided by the exponential
 map $\td{\exp} :\Alg \to \Gr$. Let $U_e$ be the
 canonical chart of the map $\td{\exp}$.
  \begin{thm}[SDE \eqref{forward_sde} in local coordinates]
 \label{Z-coord}
  Let
  \aaa{
  \label{tau1stopping}
  \tau = \inf\{s\in [t,T]:  Z_s^{t,e} \notin U_e\}.
  }
  On the interval $[t,\tau]$, SDE \eqref{forward_sde}
  has the following representation in local coordinates:
  \aaa{
  \label{FSDE-simple}
  \begin{split}
  &Z^{t, kA}_{s\we\tau} = \int_{t}^{{s}\we\tau}
  Y^{t; kA}_{r} dr + \dl_k\epsilon \,(\beta_k^A(s\we\tau) - \beta_k^A(t)),\\
  &Z^{t, kB}_{s\we\tau} = \int_{t}^{{s}\we\tau}
  Y^{t; kB}_{r} dr + \dl_k\epsilon \,(\beta_k^B(s\we\tau) - \beta_k^B(t)).
  \end{split}
  }
  where $\dl_k = 1$ if $|k|\lt N$, and $\dl_k = 0$ if $|k| > N$.
 \end{thm}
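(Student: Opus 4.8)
The plan is to push the forward equation \eqref{forward_sde} through the chart $\td{\exp}^{-1}$ and read off the resulting equation for the coordinate process up to the exit time $\tau$ of \eqref{tau1stopping}. First I would rewrite \eqref{forward_sde} in Stratonovich form, which is legitimate by Lemma \ref{lem5}, so that the change of chart obeys the ordinary chain rule. Since the canonical chart $U_e$ of $\td{\exp}$ is a diffeomorphism onto its image, on $[t,\tau]$ the coordinate process $\bar Z^t_s = \td{\exp}^{-1}(Z^{t,e}_s)$ is well defined and, by the Stratonovich chain rule, satisfies $d\bar Z^t_s = D\td{\exp}^{-1}\bigl(\hat Y_s(Z^{t,e}_s)\bigr)\,ds + \epsilon\sum_\nu D\td{\exp}^{-1}\bigl(A_\nu(Z^{t,e}_s)\bigr)\circ d\beta_\nu(s)$, where $\nu$ ranges over the pairs carried by $\sg$, i.e. $(k,A),(k,B)$ with $|k|\lt N$. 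It then remains to identify the two differentials on the right-hand side.

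The decisive geometric input is that each diffusion field is a geodesic field for the weak connection $\td\nab$. Indeed, the same computation as in Lemma \ref{lem5}, namely $(\bar k,\nab)\cos(k\cdot\te) = (\bar k,\nab)\sin(k\cdot\te) = 0$, gives $\nab_{\bar A_\nu}\bar A_\nu = 0$ on $\Tor$; hence $P_e\nab_{\bar A_\nu}\bar A_\nu = 0$ and, by Lemma \ref{lem5''}, $\td\nab_{A_\nu}A_\nu = 0$. Thus the integral curves of $A_\nu$ through $e$ are exactly the radial geodesics $r\mapsto\td{\exp}(r\bar A_\nu)$, which in the normal chart are the straight lines $r\mapsto r\bar A_\nu$. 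Combined with the vanishing of the It\^o--Stratonovich correction (Lemma \ref{lem5}) and of the connection term $\td\nab_{A_\nu}A_\nu$, this should force all second-order contributions to drop out, so that $D\td{\exp}^{-1}(A_\nu)$ reduces, in the basis $\{\bar A_k,\bar B_k\}$ of $\Alg$, to the constant coordinate vector $\bar A_\nu$; this yields the constant diffusion coefficient $\epsilon\dl_k$ in the $\nu$-th coordinate, the factor $\dl_k$ accounting for the truncation $|k|\lt N$ in $\sg$. For the drift, right-invariance of $\td\nab$ (Lemmas \ref{lem5'} and \ref{lem5''}) together with the series representation \eqref{Y-repres} identifies the coordinates of $\hat Y_s$ in the normal chart with the coefficients $\{Y^{t;kA}_s,Y^{t;kB}_s\}$ of the Navier--Stokes field $y(s,\fdot)$, producing the Lebesgue integrals $\int_t^{s\we\tau}Y^{t;kA}_r\,dr$ and $\int_t^{s\we\tau}Y^{t;kB}_r\,dr$.

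Assembling the two identifications gives precisely system \eqref{FSDE-simple} on $[t,\tau]$. I expect the main obstacle to be the rigorous justification of the coordinate identities for $D\td{\exp}^{-1}$ applied to the right-invariant fields $A_\nu$ and $\hat Y_s$ at a \emph{general} point of $U_e$, rather than only at $e$ where they are immediate: one must show that the curvature and off-diagonal contributions genuinely cancel, using the geodesic property $\td\nab_{A_\nu}A_\nu=0$ together with the orthogonality and uniform norm bounds of Lemma \ref{Gsv-hilbert}, and that these manipulations are valid in the infinite-dimensional Hilbert-manifold setting, where $\td{\exp}$ is only guaranteed to be smooth through the spray of the weak metric \eqref{weak}. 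The stopping time $\tau$ from \eqref{tau1stopping} is exactly what keeps the argument inside the region where $\td{\exp}$ is a diffeomorphism and the normal coordinates are well defined; controlling the solution up to $\tau$ and matching the quadratic variations to confirm the constancy of the diffusion coefficient is, I expect, the technically delicate part.
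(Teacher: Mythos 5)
Your strategy coincides with the paper's: pass to the normal chart of $\td{\exp}$, use Lemma \ref{lem5} to replace the It\^o integral by a Stratonovich one, and identify the images of the drift and diffusion fields with the coordinate expressions in \eqref{FSDE-simple}. The only structural difference is the direction of the argument: the paper starts from the coordinate process defined by \eqref{FSDE-simple}, applies It\^o's formula to $\td f=f\circ\td{\exp}$ for a test function $f\in\C^\infty(\Gr)$, and shows that $\td{\exp}\bigl\{\sum_k Z^{t,kA}_{s\we\tau}\bar A_k+Z^{t,kB}_{s\we\tau}\bar B_k\bigr\}$ solves \eqref{forward_sde} on $[t,\tau]$, whereas you push \eqref{forward_sde} forward through $\td{\exp}^{-1}$. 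These are equivalent modulo one key fact.

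That key fact is precisely the step you flag as the ``main obstacle'' and do not carry out: the identification, at a \emph{general} point $g=\td{\exp}(\bar g)\in U_e$, of the coordinate frame of the normal chart with the right-invariant frame, i.e.
\[
\frac{\pl}{\pl g^{kA}}\,\td f(\bar g)=A_k(g)f(g),\qquad
\frac{\pl}{\pl g^{kB}}\,\td f(\bar g)=B_k(g)f(g).
\]
The paper's proof takes this identity as its starting point (it is introduced with a ``Note that'' and no further argument), and everything else — the constant diffusion coefficients $\dl_k\epsilon$ and the identification of the drift coordinates with $Y^{t;kA}_r$, $Y^{t;kB}_r$ via \eqref{Y-repres} — then follows mechanically together with Lemma \ref{lem5}. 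Your observation that $\nab_{\bar A_\nu}\bar A_\nu=0$, hence $\td\nab_{A_\nu}A_\nu=0$, so that the integral curve of $A_\nu$ through $e$ is the radial geodesic $r\mto\td{\exp}(r\bar A_\nu)$, is correct and relevant, but by itself it only gives $(d\td{\exp})_{r\bar A_\nu}(\bar A_\nu)=A_\nu\bigl(\td{\exp}(r\bar A_\nu)\bigr)$ along those radial lines; it does not yield $(d\td{\exp})_{v}(\bar A_\nu)=A_\nu\bigl(\td{\exp}(v)\bigr)$ for arbitrary $v\in\td{\exp}^{-1}U_e$, which is what constancy of the diffusion coefficient at a general point of the chart requires (on a generic Riemannian manifold the normal coordinate frame agrees with the chosen orthonormal frame only at the center of the chart). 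As written, your argument therefore establishes \eqref{FSDE-simple} only to first order at $e$; to match the paper you must either prove the frame identification throughout $U_e$ or, as the paper does, assert it as the defining property of these coordinates and then run the It\^o computation.
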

  \begin{proof}
  Let $\bar g = \{g^{kA},g^{kB}\}_{k\in \Znu_2^+ \cup \{0\}}$
  be local coordinates
  in the neighborhood $U_e$ provided by the map $\td {\exp}$.
  Let $f \in \C^\infty(G^{\,\s}_{\V})$, and let $\td f: T_e\Gr \to \Rnu$
  be such that $\td f = f\circ \td{\exp}$. Since
  $\td{\exp}$ is a $C^\infty$-map (see \cite{ebin_marsden}), 
  then $\td f \in \C^\infty(U_0)$, where $U_0 = \td{\exp}^{-1}U_e$.
  Note that
    $\frac{\pl}{\pl g^{kA}}\td f(\bar g) = A_k(g)f(g)$ and $\frac{\pl}{\pl g^{kB}}\td f(\bar g) = B_k(g)f(g)$.
  By It\^{o}'s formula, we obtain:
  \mm{
  f(Z^{t,e}_{s\we\tau}) - f(e) =
  \td f(\bar Z^{t,0}_{s\we\tau}) - \td f(0)
  \\
  =\int_{t}^{s\we\tau} dr \sum_{k \in\Znu_2^+ \cup\{0\}}
  \frac{\pl \td f}{\pl g^{kA}}(\bar Z^{t}_r)  Y^{t;kA}_r
  + \int_t^{s\we\tau} \hspace{-1mm} \epsilon \hspace{-2mm} \sum_{k \in\Znu_2^+ \cup\{0\}}
  \dl_k\,\frac{\pl \td f}{\pl g^{kA}}(\bar Z^{t}_r) Y^{t;kA}_r
  \circ  d\beta_k^A(r) \\
  +\int_{t}^{s\we\tau} dr \sum_{k \in\Znu_2^+ \cup\{0\}}
  \frac{\pl \td f}{\pl g^{kB}}(\bar Z^{t}_r)  Y^{t;kB}_r
  + \int_t^{s\we\tau} \hspace{-1mm} \epsilon  \hspace{-2mm} \sum_{k \in\Znu_2^+ \cup\{0\}}
  \dl_k\,\frac{\pl \td f}{\pl g^{kB}}(\bar Z^{t}_r)  Y^{t;kB}_r
  \circ d\beta_k^B(r) \\
  =\int_t^{s\we\tau}  dr \,
  \sum_{k\in \Znu_2^+ \cup\{0\}} \bigl(Y^{t;kA}_rA_k(Z^{t,e}_r) f(Z^{t,e}_r)+
  Y^{t;kB}_rB_k(Z^{t,e}_r\bigr)
  f(Z^{t,e}_r)\bigr)\\
  + \int_t^{s\we\tau} \hspace{-1mm} \epsilon \hspace{-2mm} \sum_{k\in \Znu_2^+\cup\{0\}} \dl_k
  \,\bigl(A_k(Z^{t,e}_r) f(Z^{t,e}_r) \circ d\beta^{A}_k(r)+
  B_k(Z^{t,e}_r) f(Z^{t,e}_r) \circ d\beta_k^{B}(r)\bigr).
  }
  Using representations \eqref{Y-repres} and \eqref{sto_int} we obtain:
  \aa{
  f(Z^{t,e}_{s\we\tau}) - f(e) = \int_t^{s\we\tau} \hat Y_r(Z^{t,e}_r) f(Z^{t,e}_r) dr
  + \int_t^{s\we\tau} \epsilon \, \sg(Z^{t,e}_r) f(Z^{t,e}_r)\circ dW_r.
  }
  This shows that the process
  \aa{
   \exp\Bigl\{\sum_{k\in \Znu_2^+\cup \{0\}}Z^{t, kA}_{s\we\tau}\bar A_k
  + Z^{t, kB}_{s\we\tau}\bar B_k \Bigr\}
  }
  solves SDE \eqref{forward_sde} on the interval $[t,\tau]$.
  \end{proof}
 Let
 \aa{
 \check{\bar Z}_s^{t} =  \{\check Z_s^{t;kA}, \check Z_s^{t;kB}, \check Z_s^{t;k\mc A}, \check Z_s^{t;k\mc B},
  \check Z_s^{t;0A}, \check Z_s^{t;0B}\}_{k\in \Znu_2^+}
 }
 be the vector of local coordinates of the solution $Z^{t,e}_s$
 to \eqref{forward_sde} on $G^\s$, i.e. the vector of normal coordinates provided by the
 exponential map $\exp: T_e G^\s \to G^\s$. Further let
 $\check U_e$ be the canonical chart of the map $\exp$.
  \begin{thm}
  Let
  \aa{
  \check\tau = \inf\{s\in [t,T]:  Z_s^{t,e} \notin \check U_e\}.
  }
  Then, a.s. $\check \tau = \tau$, where the stopping time $\tau$
  is defined by  \eqref{tau1stopping}, and on $[t,\tau]$, 
  $\check Z_s^{t;kA} = Z_s^{t;kA}$,  $\check Z_s^{t;kB} = Z_s^{t;kB}$,
  $k\in \Znu^2_+\cup \{0\}$, $\check Z_s^{t;k\mc A}= \check Z_s^{t;k\mc B} = 0$,
  $k\in \Znu_2^+$, a.s.
 \end{thm}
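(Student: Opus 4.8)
The plan is to transcribe the It\^o's-formula computation of Theorem \ref{Z-coord} into the normal coordinates of the ambient group $G^\s$ and then read off the three assertions from the resulting coordinate SDEs together with a uniqueness argument. First I would set up the $G^\s$-chart. Recall from Remark \ref{remark1} that the basis $\{\bar A_k,\bar B_k\}_{k\in\Znu_2^+\cup\{0\}}$ of $\Alg$ is completed to the orthogonal basis $\{\bar A_k,\bar B_k\}_{k\in\Znu_2^+\cup\{0\}}\cup\{\bar{\mc A_k},\bar{\mc B_k}\}_{k\in\Znu_2^+}$ of $T_eG^\s$, with $\mc A_k,\mc B_k$ the corresponding right-invariant fields. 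For $f\in\C^\infty(G^\s)$ put $\check f = f\circ\exp$ on $\check U_0 = \exp^{-1}\check U_e$. Since $\exp:T_eG^\s\to G^\s$ plays on $G^\s$ exactly the role that $\td{\exp}$ plays on $\Gr$, the argument of Theorem \ref{Z-coord} gives, for the enlarged basis,
\aa{
&\frac{\pl\check f}{\pl\check g^{kA}} = A_k(g)f(g), \quad
 \frac{\pl\check f}{\pl\check g^{kB}} = B_k(g)f(g),\\
&\frac{\pl\check f}{\pl\check g^{k\mc A}} = \mc A_k(g)f(g), \quad
 \frac{\pl\check f}{\pl\check g^{k\mc B}} = \mc B_k(g)f(g).
}

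Next I would apply It\^o's formula to $f(Z^{t,e}_{s\we\check\tau})$ in Stratonovich form (justified by Lemma \ref{lem5}), using the drift $\hat Y_r(Z^{t,e}_r)$ from \eqref{Y-repres} and the diffusion $\epsilon\,\sg(Z^{t,e}_r)$ from \eqref{sto_int}. The decisive observation is that both $\hat Y_r$ and $\sg$ involve only the fields $A_k,B_k$ and none of the $\mc A_k,\mc B_k$: the former because $y(r,\fdot)$ is divergence-free by Assumption \ref{asn1}, hence lies in $\Alg$; the latter by the very definition of $\sg$. Taking $f$ equal to each coordinate function then produces the coordinate SDEs. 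For $f=\check g^{k\mc A}$ (and likewise $f=\check g^{k\mc B}$) one has $A_jf = B_jf = 0$ for every $j$, so that $\hat Y_rf$ and $\sg(Z^{t,e}_r)f$ vanish identically; since $\check g^{k\mc A}(e)=0$, It\^o's formula yields $\check Z^{t;k\mc A}_{s\we\check\tau}=\check Z^{t;k\mc B}_{s\we\check\tau}=0$. For $f=\check g^{kA}$ (resp. $\check g^{kB}$) the same computation reproduces verbatim the explicit equations \eqref{FSDE-simple}, namely $\check Z^{t;kA}_{s\we\check\tau}=\int_t^{s\we\check\tau}Y^{t;kA}_r\,dr+\dl_k\epsilon\,(\beta_k^A(s\we\check\tau)-\beta_k^A(t))$, and analogously for $B$.

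Finally I would compare the two representations on $[t,\tau\we\check\tau]$. There the real-valued processes $\check Z^{t;kA}_s$ and $Z^{t;kA}_s$ both equal the same pathwise-explicit expression \eqref{FSDE-simple} with the same initial value, so they coincide, and the complementary coordinates vanish; in particular the $G^\s$-coordinate vector remains in the $\Alg$-directions and equals the $\Gr$-coordinate vector $\bar Z^t_s$. Consequently $Z^{t,e}_s=\td{\exp}(v_s)=\exp(v_s)$ with $v_s=\sum_k Z^{t;kA}_s\bar A_k+Z^{t;kB}_s\bar B_k\in\Alg$, so the exits of $Z^{t,e}_s$ from $U_e$ and from $\check U_e$ are both governed by the single $\Alg$-valued process $v_s$ leaving the canonical domains $U_0=\td{\exp}^{-1}U_e$ and $\Alg\cap\check U_0$. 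I expect the main obstacle to be precisely the stopping-time equality: one must show that the canonical charts of $\exp$ and $\td{\exp}$ are compatible along the $\Alg$-directions, i.e. $\Alg\cap\check U_0=U_0$ near $e$, from which $\check\tau=\tau$ a.s. follows. Everything else is a direct transcription of Theorem \ref{Z-coord}.
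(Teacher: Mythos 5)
Your proposal follows essentially the same route as the paper: its proof likewise introduces the additional coordinates $g^{k\mc A}, g^{k\mc B}$, repeats the It\^o computation of Theorem \ref{Z-coord} in the enlarged chart, and uses precisely the two facts you isolate --- that $Y^{k\mc A}_s = Y^{k\mc B}_s = 0$ and that the Brownian motion runs only along divergence-free and constant vector fields --- to conclude that $\check{\bar Z}^{t}_s$ satisfies \eqref{FSDE-simple} together with $\check Z^{t;k\mc A}_s = \check Z^{t;k\mc B}_s = 0$. The only difference is that you explicitly flag the chart-compatibility question behind $\check\tau = \tau$, which the paper's very terse proof passes over in silence.
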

 \begin{proof}
  Let us introduce 
  additional local coordinates $g^{k\mc A}, g^{k\mc B}$, $k\in \Znu_2^+$,
  and perform the same computation as in the proof of Theorem \ref{Z-coord}.
  We have to take into account that $Y^{k\mc A}_s = Y^{k\mc B}_s = 0$, $k\in \Znu_2^+$,
  and that the components of the Brownian motion are 
  non-zero only along divergence-free
  and constant vector fields. We obtain that 
  the coordinate process $\check{\bar Z}_s^{t}$ verifies 
  SDEs  \eqref{FSDE-simple} and the equations 
  $\check Z_s^{t;k\mc A}= \check Z_s^{t;k\mc B} = 0$, $k\in \Znu_2^+$.
 \end{proof}

 \subsection{The backward SDE and the solution of the FBSDEs}

 We have the following result:
 \begin{thm}
 \label{thm2}
 Let $\hat Y_s$ be the right-invariant vector field generated
 by the solution $y(s,\fdot)$ to the backward Navier--Stokes equations
 \eqref{ns_consider}.
 Further let $Z^{t,e}_s$ be the solution to SDE \eqref{forward_sde}
 on $\Gr$.
 Then there exists an $\epsilon >0$
 such that the triple of stochastic processes
 \aa{
 Z^{t,e}_s, \,  Y^{t,e}_s = \hat Y_s(Z^{t,e}_s), \,
 X^{t,e}_s = \epsilon\, \sg(Z^{t,e}_s)\hat Y_s(Z^{t,e}_s)
 }
 solves FBSDEs \eqref{fbsde-torus} on the interval $[t,T]$.
 \end{thm}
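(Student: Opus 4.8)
The plan is to verify the three relations defining system \eqref{fbsde-torus} directly for the proposed triple. Two of them are immediate: since $Y^{t,e}_s = \hat Y_s(Z^{t,e}_s)$ and $Z^{t,e}_s$ solves \eqref{forward_sde}, the forward equation and the initial condition $Z^{t,e}_t=e$ hold by construction; and since the integral in \eqref{ns_consider} is empty at $s=T$, we have $y(T,\fdot)=h$, whence $Y^{t,e}_T=y(T,\fdot)\circ Z^{t,e}_T=h\circ Z^{t,e}_T=\hat h(Z^{t,e}_T)$. The real content is the backward equation, and for it I would apply It\^o's formula to the $H^\s(\Tor,\Rnu^2)$-valued process $Y^{t,e}_s=y(s,\fdot)\circ Z^{t,e}_s$. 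This is legitimate since, by Theorem \ref{solution2}, $Z^{t,e}_s$ solves \eqref{forward_sde} as an SDE in the Hilbert space $H^\s(\Tor,\Rnu^2)$, and since $y(s,\fdot)\in H^{\s+1}$ keeps every composition below an $H^\s$-map by Lemma \ref{Glik_Sob_map}.

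The computation rests on the Fr\'echet derivatives of the composition map $\Phi_s(g)=y(s,\fdot)\circ g$. In the direction $A_k(g)=\bar A_k\circ g$ one finds $D\Phi_s(g)[A_k(g)]=\bigl((\bar A_k,\nab)y(s,\fdot)\bigr)\circ g$ --- the same chain rule already used for $\imath_{\sss V}$ in Theorem \ref{solution2} --- and for the second derivative the flat Hessian contraction $D^2\Phi_s(g)[A_k(g),A_k(g)]=\bigl((\bar A_k,\nab)^2 y(s,\fdot)\bigr)\circ g$, the first-order correction dropping out because $(\bar A_k,\nab)\bar A_k=0$ (the same cancellation as in Lemma \ref{lem5}). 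Inserting the drift $\hat Y_s$ and the diffusion fields $A_k,B_k$ of \eqref{forward_sde} into It\^o's formula then yields a drift equal to $\bigl[\pl_s y + (y,\nab)y + \tfrac{\epsilon^2}{2}Ly\bigr]\circ Z^{t,e}_s$ and a martingale part equal to $\epsilon\,\sg(Z^{t,e}_s)\hat Y_s(Z^{t,e}_s)\,dW_s=X^{t,e}_s\,dW_s$, where $L=\lap+\sum_{k\in\Znu_2^+,\,|k|\lt N}|k|^{-2\s-2}(\bar k,\nab)^2$; the first term arises from the constant directions $\bar A_0,\bar B_0$ and the sum from $\bar A_k,\bar B_k$ after using $(\bar k,\nab)\cos(k\cdot\te)=(\bar k,\nab)\sin(k\cdot\te)=0$ and $\cos^2+\sin^2=1$.

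The decisive step is to identify $L$ with a multiple of the Laplacian. I would show that the matrix $Q=\sum_{k\in\Znu_2^+,\,|k|\lt N}|k|^{-2\s-2}\,\bar k\,\bar k^{\top}$ is isotropic, $Q=cI$. Because the summand is invariant under $k\mapsto-k$, the sum over $\Znu_2^+$ equals half the sum over $\Znu^2\setminus\{0\}$ with $|k|\lt N$; on the full lattice the off-diagonal entry vanishes by $k_1\mapsto-k_1$ antisymmetry and the two diagonal entries agree under the swap $k_1\leftrightarrow k_2$, so $c=\tfrac12\sum_{k\in\Znu_2^+,\,|k|\lt N}|k|^{-2\s}$. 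Hence $L=(1+c)\lap$, and the choice $\epsilon=\sqrt{2\nu/(1+c)}$ gives $\tfrac{\epsilon^2}{2}Ly=\nu\lap y$. Finally, substituting the backward Navier--Stokes identity $\pl_s y+(y,\nab)y=-\nab p-\nu\lap y$ read off from \eqref{ns_consider} collapses the drift to $-\nab p(s,\fdot)\circ Z^{t,e}_s=-\hat V(s,Z^{t,e}_s)$, matching the backward equation of \eqref{fbsde-torus}.

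I expect the main obstacle to be making the infinite-dimensional It\^o expansion rigorous, and in particular controlling the second-order term: one must confirm that the derivative identities for $\Phi_s$ hold in the $H^\s$-topology, that the vanishing of $(\bar A_k,\nab)\bar A_k$ leaves exactly the Hessian contraction, and that no further cross-terms survive the quadratic variation. The isotropy identity $Q=cI$, although it is the conceptual reason the viscosity is reproduced, is then a routine symmetry computation; the recurring technical point is the regularity bookkeeping --- keeping $y(s,\fdot)$ one Sobolev order above $H^\s$ so that each composed field returns to $H^\s$.
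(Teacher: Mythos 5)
Your computational core is exactly the paper's: apply It\^o's formula to the composition $y(s,\fdot)\circ Z^{t,e}_s$, use the chain rule $D\Phi_s(g)[A_k(g)]=\bigl((\bar A_k,\nab)y(s,\fdot)\bigr)\circ g$, kill the first-order correction via $(\bar k,\nab)\cos(k\cdot\te)=(\bar k,\nab)\sin(k\cdot\te)=0$, and reduce the second-order term to a multiple of $\lap$ by the $\cos^2+\sin^2=1$ and lattice-symmetry argument. Your isotropy computation $Q=cI$ with $c=\tfrac12\sum_{k\in\Znu_2^+,\,|k|\lt N}|k|^{-2\s}$ is a repackaging of the paper's Lemma \ref{lapl} (which couples the terms indexed by $k$ and $\bar k$ via $(\bar k,\nab)^2+(k,\nab)^2=|k|^2\lap$), and your choice $\epsilon=\sqrt{2\nu/(1+c)}$ is the same normalization. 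The verification of the terminal condition and the substitution of \eqref{ns_consider} into the drift are also as in the paper.

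There is, however, one genuine gap, and it sits precisely where you flagged "the main obstacle": you propose to run It\^o's formula for the $H^\s(\Tor,\Rnu^2)$-valued process and to handle regularity by "keeping $y(s,\fdot)$ one Sobolev order above $H^\s$." One order is not enough. The second-order It\^o term is $\bigl((\bar A_k,\nab)^2y(s,\fdot)\bigr)\circ Z^{t,e}_s$, which costs two derivatives; with $y(s,\fdot)\in H^{\s+1}$ (which is all that Assumption \ref{asn1} provides) this term lands only in $H^{\s-1}$, so the map $g\mto y(s,\fdot)\circ g$ is not $\C^2$ into $H^\s$ and the $H^\s$-valued It\^o expansion cannot be closed. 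The paper's resolution is a two-step argument your proposal is missing: first carry out the It\^o computation for $\hat Y_s$ viewed as a $\C^2$ map $\Gr\to L_2(\Tor,\Rnu^2)$ (the second-order terms, being elements of $TG^{\s-1}$, are well defined there), concluding that the pair $\bigl(\hat Y_s(Z^{t,e}_s),\epsilon\,\sg(Z^{t,e}_s)\hat Y_s(Z^{t,e}_s)\bigr)$ solves BSDE \eqref{bwd_sde} in $L_2$; then invoke the existence-and-uniqueness statement for the BSDE in $H^\s$ (Lemma \ref{lem11}) together with the fact that the $H^\s$-solution is in particular an $L_2$-solution, to identify the two and conclude that the constructed pair is the (unique $\mc F_s$-adapted) $H^\s$-valued solution. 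Without some such device, your argument either requires the unavailable hypothesis $y(s,\fdot)\in H^{\s+2}$ or only establishes the backward equation in a topology weaker than the one in which system \eqref{fbsde-torus} is posed.
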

\begin{rem}
\label{remark2}
{\rm
 The expression $\sg(Z^{t,e}_s)\hat Y_s(Z_{s}^{t,e})$
 means the following:
 \aa{
\sg(Z^{t,e}_s)\hat Y_{s}(Z_{s}^{t,e}) 
  = \hspace{-3mm} \sum_{k\in \Znu_2^+\cup\{0\}, |k| \lt N} \hspace{-2mm}
  A_k(Z^{t,e}_s)\hat Y_{s}(Z_{s}^{t,e})\ox e^A_k
 + B_k(Z^{t,e}_s)\hat Y_{s}(Z_{s}^{t,e})\ox e^B_k 
 }
 where $\hat Y_s(\fdot)$ is regarded as a function $\Gr \to H^\s(\Tor,\Rnu^2)$,
 and  $A_k(g)\hat Y_s(g)$ means differentiation of $\hat Y_s :\, \Gr \to H^\s(\Tor,\Rnu^2)$
 along the vector field $A_k$ at the point $g\in \Gr$.
 Let $\gm_\xi$ be the geodesic in $\Gr$ such that $\gm_0 = e$ and $\gm'_0 = \bar A_k$.
 We obtain:
 \mmm{
 \label{diff_along_vfield}
 A_k(g)\hat Y_s(g)(\te) = \frac{d}{d\xi} \,\hat Y_s(\gm_\xi\circ g)(\te)|_{\xi=0}
 = R_g \,\frac{d}{d\xi}\, y(s,\gm_\xi \te)|_{\xi = 0} \\ = R_g \,\nab_{\bar A_k}y(s,\te) 
 = \bar\nab_{A_k}\hat Y_s(g)(\te).
 }
 Thus,
 \aaa{
\label{X}
X^{t,e}_s = 
  \epsilon \hspace{-2mm}\sum_{k\in \Znu_2^+\cup\{0\}, |k| \lt N}
 [\nab_{ \bar A_k} y(s,\fdot) \ox e^A_k + \nab_{ \bar B_k} y(s,\fdot) \ox e^B_k]\circ Z_{s}^{t,e},
 }
 and the stochastic integral in \eqref{bwd_sde} can be represented as
 \mm{
 \int_s^T X^{t,e}_r dW_r \\ = \epsilon \hspace{-2mm} \sum_{k\in \Znu_2^+\cup\{0\}, |k| \lt N}
 \int_s^T \nab_{ \bar A_k} y(r,\fdot) \circ Z_{r}^{t,e} d\beta^A_k(r)
 + \int_s^T \nab_{ \bar B_k} y(r,\fdot) \circ Z_{r}^{t,e} d\beta^B_k(r).
 }
 In particular, if $N=0$,
 \aa{
 \int_s^T X^{t,e}_r dW_r =
 \epsilon \left(
 \int_s^T \hspace{-2mm}\frac{\pl}{\pl \te_1} y(r,\fdot) \circ Z_{r}^{t,e} d\beta^A_0(r)
 + \int_s^T \hspace{-2mm} \frac{\pl}{\pl \te_2} y(r,\fdot) \circ Z_{r}^{t,e} d\beta^B_0(r)
 \right).
 }
}
\end{rem}
A result similar to Lemma \ref{lapl} below  was obtained in \cite{CC}.
 \begin{lem}[The Laplacian of a right-invariant vector field]
 \label{lapl}
 Let $\hat V$ be the right-invariant vector field on
 $G^{\td \al}$ generated by an $H^{\td{\al}+2}$-vector field $V$
 on $\Tor$. Further let  $\epsilon > 0$ be such that
  $\frac{\epsilon^2}2 \Bigl(1+\frac12\sum_{k\in\Znu_2^+, |k|\lt N}\frac1{|k|^{2\s}}\Bigr) = \nu$.
  Then for all $g\in G^{\td\al}$,
  \aaa{
  \label{lap}
   \frac{\epsilon^2}2 \sum_{\substack{k\in\Znu_2^+ \cup\{0\},\\ |k|\lt N}}
   \bigl(\bar \nab_{A_k}\bar \nab_{A_k}
   +\bar \nab_{B_k}\bar \nab_{B_k}\bigr)\,\hat V (g)
   = \nu \, \lap V  \circ g.
  }
 \end{lem}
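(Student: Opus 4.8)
The plan is to push both covariant derivatives down to the torus using the right-invariance of $\bar\nab$, reducing identity \eqref{lap} to a purely Fourier-analytic computation on $\Tor$.

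First I would apply Lemma \ref{lem5'} twice. Since $\bar A_k$ and $\bar B_k$ are smooth (hence $H^{\td\al}$) and $V\in H^{\td\al+2}$, the field $\bar\nab_{A_k}\hat V$ is the right-invariant vector field generated by $\nab_{\bar A_k}V\in H^{\td\al+1}$; applying Lemma \ref{lem5'} once more to this field shows that $\bar\nab_{A_k}\bar\nab_{A_k}\hat V$ is the right-invariant vector field generated by $\nab_{\bar A_k}\nab_{\bar A_k}V\in H^{\td\al}$, i.e.
\[
\bar\nab_{A_k}\bar\nab_{A_k}\hat V(g) = \bigl(\nab_{\bar A_k}\nab_{\bar A_k}V\bigr)\circ g .
\]
The hypothesis $V\in H^{\td\al+2}$ is exactly what is needed to apply Lemma \ref{lem5'} twice. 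This turns the left-hand side of \eqref{lap} into $\frac{\epsilon^2}2\sum_k[\nab_{\bar A_k}\nab_{\bar A_k}V + \nab_{\bar B_k}\nab_{\bar B_k}V]\circ g$, so it remains to identify the bracket with $\bigl(1+\frac12\sum_{|k|\lt N}\frac1{|k|^{2\s}}\bigr)\lap V$.

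Second, I would compute each summand on the flat torus, where the Levi-Civita connection is trivial and $\nab_{\bar A_k}V = (\bar A_k,\nab)V$ acts componentwise. Writing $\bar A_k = \frac1{|k|^{\s+1}}\cos(k\cdot\te)\,\bar k$ and using the identity $(\bar k,\nab)\cos(k\cdot\te)=0$ already exploited in the proofs of Lemmas \ref{Gsv-hilbert} and \ref{lem5}, the oscillatory prefactor passes through the directional derivative, and similarly for $\bar B_k$ with $(\bar k,\nab)\sin(k\cdot\te)=0$. Hence for $k\in\Znu_2^+$,
\[
\nab_{\bar A_k}\nab_{\bar A_k}V + \nab_{\bar B_k}\nab_{\bar B_k}V
= \frac{\cos^2(k\cdot\te)+\sin^2(k\cdot\te)}{|k|^{2\s+2}}(\bar k,\nab)^2 V
= \frac1{|k|^{2\s+2}}(\bar k,\nab)^2 V ,
\]
while the constant vectors $\bar A_0,\bar B_0$ give $\nab_{\bar A_0}\nab_{\bar A_0}V + \nab_{\bar B_0}\nab_{\bar B_0}V = \lap V$.

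Third, I would sum over $k$ and exploit the symmetry of $\Znu_2^+$. Expanding $(\bar k,\nab)^2 = k_2^2\frac{\pl^2}{\pl\te_1^2} - 2k_1k_2\frac{\pl^2}{\pl\te_1\pl\te_2} + k_1^2\frac{\pl^2}{\pl\te_2^2}$, the mixed term cancels: pairing $(k_1,k_2)$ with $(k_1,-k_2)$, both of which lie in $\Znu_2^+$ when $k_1>0$ and share the same $|k|$, flips the sign of $k_1k_2$, and the terms with $k_1=0$ carry no mixed derivative. For the pure second derivatives, the coefficients $\sum\frac{k_2^2}{|k|^{2\s+2}}$ and $\sum\frac{k_1^2}{|k|^{2\s+2}}$ coincide, because $\Znu_2^+$ is a fundamental domain for $k\mapsto -k$ and the $90^\circ$ rotation $k\mapsto(-k_2,k_1)$ interchanges $k_1^2$ and $k_2^2$ on the full lattice $\Znu^2\setminus\{0\}$ while preserving $|k|$; since $k_1^2+k_2^2=|k|^2$, each coefficient equals $\frac12\sum_{|k|\lt N}\frac1{|k|^{2\s}}$. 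Collecting the $k=0$ and $k\neq 0$ contributions gives $\sum_k[\cdots] = \bigl(1+\frac12\sum\frac1{|k|^{2\s}}\bigr)\lap V$; multiplying by $\frac{\epsilon^2}2$, invoking the defining relation for $\epsilon$, and composing with $g$ completes the proof.

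I expect the main obstacle to be the bookkeeping of the lattice sum in the third step, namely verifying that the mixed second-derivative terms cancel and that the two diagonal coefficients are equal, so that the anisotropic operators $(\bar k,\nab)^2$ assemble into an isotropic Laplacian with precisely the constant appearing in the condition on $\epsilon$. I would also take care with the sign convention: the operator produced here is $\frac{\pl^2}{\pl\te_1^2}+\frac{\pl^2}{\pl\te_2^2}$, which is the analyst's Laplacian $\lap$ entering the Navier--Stokes system \eqref{ns_consider}, rather than the oppositely signed Laplace--de Rham operator used in Lemma \ref{Gsv-hilbert}.
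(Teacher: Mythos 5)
Your proposal is correct and follows essentially the same route as the paper: reduce to $g=e$ via right-invariance (Lemma \ref{lem5'}), use $(\bar k,\nab)\cos(k\cdot\te)=(\bar k,\nab)\sin(k\cdot\te)=0$ to get $\nab_{\bar A_k}\nab_{\bar A_k}V+\nab_{\bar B_k}\nab_{\bar B_k}V=|k|^{-2\s-2}(\bar k,\nab)^2V$, then symmetrize the lattice sum. The only cosmetic difference is in the last step: where you expand $(\bar k,\nab)^2$ into components and invoke two symmetries of $\Znu_2^+$ (sign flip and $90^\circ$ rotation), the paper pairs each $k$ with $\pm\bar k\in\Znu_2^+$ and uses $(\bar k,\nab)^2+(k,\nab)^2=|k|^2\lap$ directly — both yield the constant $\frac12\sum_{|k|\lt N}|k|^{-2\s}$.
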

 Here $\td \al$ is an integer which is not necessary equal to $\s$.
 \begin{proof}
 By the right-invariance of the vector fields
 $\bar \nab_{A_k} \bar \nab_{A_k} \hat V$ and
 $\bar \nab_{B_k} \bar \nab_{B_k} \hat V$ (Lemma \ref{lem5'}),
 it suffices to show \eqref{lap} for $g=e$.
 We observe that
 \aa{
  (\bar k,\nab)\cos(k\cdot \te) = (\bar k,\nab)\sin(k\cdot \te) = 0.
  }
 Then, for $k\in \Znu_2^+$, $\te\in\Tor$,
  \aa{
  \bar \nab_{A_k}\bar \nab_{A_k}\hat V(e)(\te) = \frac1{|k|^{2\s+2}}
   \cos(k\cdot \te)(\bar k,\nab)\bigl[
   \cos(k\cdot \te)(\bar k,\nab)V(\te)\bigr] \\
   = \frac1{|k|^{2\s+2}} \cos(k\cdot \te)^2(\bar k,\nab)^2 V(\te).
  }
  Similarly, $\bar \nab_{B_k}\bar \nab_{B_k}\hat V(e)(\te)= \frac1{|k|^{2\s+2}} \sin(k\cdot \te)^2(\bar k,\nab)^2 V(\te)$. Hence, for each $k\in\Znu_2^+$,
  \aaa{
  \label{ABk}
  (\bar \nab_{A_k}\bar \nab_{A_k} + \bar \nab_{B_k}\bar \nab_{B_k})\,\hat V(e)(\te)
  = \frac1{|k|^{2\s+2}} (\bar k,\nab)^2 V(\te).
  }
  Note that for each $k\in\Znu_2^+$, either
  $\bar k$ or $-\bar k$ is in $\Znu_2^+$, and
  \aa{
  (\bar k,\nab)^2 + (k,\nab)^2 = |k|^2\lap.
  }
  Summation over $k\in\Znu_2^+$, $|k|\lt N$,
  in \eqref{ABk},
  and coupling the terms numbered by $k$ and $\bar k$ (or
  $-\bar k$) gives:
  \aa{
  \sum_{k\in\Znu_2^+,|k|\lt N}
  (\bar \nab_{A_k}\bar \nab_{A_k} + \bar \nab_{B_k}\bar \nab_{B_k})\hat V(e)(\te)
  = \frac12 \sum_{k\in\Znu_2^+, |k|\lt N}\frac1{|k|^{2\s}}\,\lap V(\te).
  }
  Note that $(\bar \nab_{A_0}\bar \nab_{A_0} + \bar \nab_{B_0}\bar \nab_{B_0})\hat V(e)(\te) = \lap V(\te)$.
  Finally, we obtain:
  \aa{
  \sum_{\substack{k\in\Znu_2^+ \cup\{0\},\\ |k|\lt N}}
  (\bar \nab_{A_k}\bar \nab_{A_k}+ \bar \nab_{B_k}\bar \nab_{B_k})\hat V(e)(\te)
  = \Bigl(1 + \frac12\sum_{k\in\Znu_2^+, |k|\lt N}\frac1{|k|^{2\s}}\Bigr) \lap V(\te).
  }
  The lemma is proved.
 \end{proof}
\begin{cor}
\label{cor_laplacian}
Let the function $\ffi:\Tor\to\Rnu^2$ be $\C^2$-smooth.
Further let $A_k(g)[\ffi\circ g]$ and $B_k(g)[\ffi\circ g]$, $k\in\Znu^+_2$, mean the differentiation
of the function $G^{\td \al} \to L_2(\Tor,\Rnu^2)$, $g\mto \ffi\circ g$
along $A_k$ and resp. $B_k$. Then for all $g\in G^{\td\al}$,
\aaa{
\label{lap_smooth_f}
   \frac{\epsilon^2}2 \sum_{\substack{k\in\Znu_2^+ \cup\{0\},\\ |k|\lt N}}
   \bigl(A_k(g) A_k(g) + B_k(g) B_k(g)\bigr)\, [\ffi\circ g]
   = \nu \, \lap \ffi  \circ g.
}
\end{cor}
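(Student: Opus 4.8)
The plan is to reduce the statement to the pointwise computation already carried out in the proof of Lemma~\ref{lapl}; the only genuinely new ingredient is a chain rule for the composition map $g\mapsto\ffi\circ g$, viewed as taking values in the \emph{weak} space $L_2(\Tor,\Rnu^2)$, which must be established under the hypothesis $\ffi\in\C^2$ rather than $\ffi\in H^{\td\al+2}$.

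First I would establish the first-order identity: for any $\C^1$-map $\psi:\Tor\to\Rnu^2$, the differentiation of $G^{\td\al}\to L_2(\Tor,\Rnu^2)$, $g\mapsto\psi\circ g$, along $A_k$ equals $(\nab_{\bar A_k}\psi)\circ g$, and likewise along $B_k$. This is proved exactly as in \eqref{diff_along_vfield}: if $\gm_\xi$ is the geodesic in $G^{\td\al}$ with $\gm_0=e$ and $\gm'_0=\bar A_k$, then the curve $\xi\mapsto\gm_\xi\circ g$ has velocity $A_k(g)=\bar A_k\circ g$ at $\xi=0$, so for each $\te\in\Tor$ the chain rule gives $\frac{d}{d\xi}\psi(\gm_\xi(g(\te)))\big|_{\xi=0}=D\psi(g(\te))\,\bar A_k(g(\te))=(\nab_{\bar A_k}\psi)\circ g\,(\te)$. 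That the map $g\mapsto\psi\circ g$ is indeed differentiable into $L_2$ for merely $\C^1$ data is the content of the $\om$-lemma (see \cite{Gliklikh1}): passing to the weaker target $L_2$ makes the composition operator one degree smoother than the composed map.

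Applying this identity twice — which is legitimate because $\nab_{\bar A_k}\ffi$ is $\C^1$ whenever $\ffi$ is $\C^2$ — I would obtain $A_k(g)A_k(g)[\ffi\circ g]=(\nab_{\bar A_k}\nab_{\bar A_k}\ffi)\circ g$, together with the analogous identity for $B_k$. Hence the left-hand side of \eqref{lap_smooth_f} becomes $\frac{\epsilon^2}2\sum_{k\in\Znu_2^+\cup\{0\},\,|k|\lt N}(\nab_{\bar A_k}\nab_{\bar A_k}+\nab_{\bar B_k}\nab_{\bar B_k})\ffi\circ g$. It then remains to invoke the fiberwise identity $\frac{\epsilon^2}2\sum_{k}(\nab_{\bar A_k}\nab_{\bar A_k}+\nab_{\bar B_k}\nab_{\bar B_k})\ffi=\nu\,\lap\ffi$, which is precisely the computation performed in the proof of Lemma~\ref{lapl} with $V$ replaced by $\ffi$: it uses only $(\bar k,\nab)\cos(k\cdot\te)=(\bar k,\nab)\sin(k\cdot\te)=0$, the pairing of the terms indexed by $k$ and $\bar k$ through $(\bar k,\nab)^2+(k,\nab)^2=|k|^2\lap$, and the choice of $\epsilon$ fixed in Lemma~\ref{lapl}. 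Composing the resulting equality with $g$ yields $\nu\,\lap\ffi\circ g$, as required.

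The main obstacle is exactly the regularity gap: Lemma~\ref{lapl} is stated for $V\in H^{\td\al+2}$, so one cannot simply substitute $V=\ffi$ there. The substance of the corollary is that, because the two differentiations are now taken into the weaker space $L_2(\Tor,\Rnu^2)$, the second-order directional derivatives make classical sense already for $\ffi\in\C^2$; the work is to justify this low-regularity chain rule rigorously and to match each differentiation with the corresponding classical directional derivative of $\ffi$. Once that is in place, the algebra of the sum over $k$ is identical to that of Lemma~\ref{lapl} and requires nothing new.
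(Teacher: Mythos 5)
Your proposal is correct and follows essentially the same route as the paper: compute the first derivative $A_k(g)[\ffi\circ g]=(\nab_{\bar A_k}\ffi)\circ g$ via the geodesic/chain-rule argument of \eqref{diff_along_vfield}, iterate it once more, and then repeat the summation algebra of Lemma~\ref{lapl}. Your additional discussion of why $\C^2$ regularity suffices when the target is $L_2(\Tor,\Rnu^2)$ makes explicit a point the paper leaves implicit, but it is the same proof.
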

\begin{proof}
The computation that we made in \eqref{diff_along_vfield}
but applied to $\ffi\circ g$ implies that
\aa{
A_k(g)[\ffi\circ g] = \Bigl[\frac1{|k|^{\al+1}} \cos(k\cdot\te) (\bar k,\nab)\ffi(\te) \Bigr]
\circ g.
}
Similarly, we compute $B_k(g)[\ffi\circ g]$.
Now we just have to repeat the proof of Lemma \ref{lapl} to come to \eqref{lap_smooth_f}.
\end{proof}
\begin{lem}
  \label{lem11}
  Let $\Phi_r$, $r\in [t,T]$, $t\in [0,T)$, be an $H^\al(\Tor,\Rnu^2)$-valued stochastic process
  whose trajectories are integrable, and let $\phi_T$ be an $H^\al(\Tor,\Rnu^2)$-valued
  random element so that both $\Phi_r$ and $\phi_T$ possess finite expectations.
  Then there exists an $\mc F_s$-adapted 
  $H^\s(\Tor,\Rnu^2)\x \mc L\bigl(E,H^\s(\Tor,\Rnu^2)\bigr)$-valued pair 
  of stochastic processes $(Y_s, X_s)$
  solving the BSDE 
  \aaa{
  \label{another_bsde}
   Y_s = \phi_T + \int_s^T \Phi_r\, dr -\int_s^T X_r\, dW_r
  }
  on $[t,T]$. 
  The $Y_s$-part of the solution has the representation
  \aaa{
  \label{Y_solu}
  Y_s = \E \, [ \, \phi_T + \int_s^T \Phi_r \, dr \,| \, \mc F_s ] \,,
  }and therefore is unique. The $X_s$-part of the solution is unique with respect to the norm
  $\|X_s\|^2 = \int_t^T \|X_s\|_{\mc L(E,H^\s(\Tor,\Rnu^2))}^2 \, ds$.
 \end{lem}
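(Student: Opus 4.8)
The plan is to reduce Lemma \ref{lem11} to the martingale representation theorem in the Hilbert space $H^\s(\Tor,\Rnu^2)$, exploiting the crucial feature that the generator $\Phi_r$ of the BSDE \eqref{another_bsde} does not depend on the unknowns $(Y_s,X_s)$. Because of this, no fixed-point or contraction argument is needed: once a suitable martingale is represented as a stochastic integral, the pair $(Y_s,X_s)$ is read off explicitly.

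First I would introduce the $H^\s(\Tor,\Rnu^2)$-valued process
\[
M_s = \E\Bigl[\,\phi_T + \int_t^T \Phi_r\,dr \,\Big|\, \mc F_s\,\Bigr], \qquad s\in[t,T].
\]
Since $\phi_T$ and $\int_t^T\Phi_r\,dr$ are integrable $H^\s$-valued random elements, $M_s$ is a well-defined $\mc F_s$-martingale with values in $H^\s(\Tor,\Rnu^2)$. As $\Phi$ is adapted, the term $\int_t^s\Phi_r\,dr$ is $\mc F_s$-measurable and may be pulled out of the conditional expectation, giving $M_s = \int_t^s\Phi_r\,dr + Y_s$, where $Y_s$ is exactly the candidate \eqref{Y_solu}. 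In particular $Y_T = \phi_T$, so the terminal condition holds automatically, and the uniqueness of $Y_s$ is immediate from the a.s.\ uniqueness of conditional expectation.

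Next I would apply the martingale representation theorem to $M_s$. The essential point is that the filtration $\mc F_s = \sg(W_r, r\in[0,s])$ is generated by the finite-dimensional Brownian motion $W_s$, since $E$ is spanned by the finitely many vectors $\{e_k^A,e_k^B\}_{k\in\Znu_2^+\cup\{0\},\,|k|\lt N}$. The Hilbert-space-valued version of this theorem then produces an $\mc F_s$-adapted process $X_s$ with values in $\mc L(E,H^\s(\Tor,\Rnu^2))$, unique in the norm $\|X_s\|^2 = \int_t^T\|X_s\|^2_{\mc L(E,H^\s(\Tor,\Rnu^2))}\,ds$, such that $M_s = M_t + \int_t^s X_r\,dW_r$. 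Concretely, writing $X_r = \sum_{|k|\lt N} X_r^{kA}\ox e_k^A + X_r^{kB}\ox e_k^B$, one may pair $M$ with an orthonormal basis of $H^\s(\Tor,\Rnu^2)$ and apply the classical scalar representation coordinatewise. To conclude, combining $M_T - M_s = \int_s^T X_r\,dW_r$ with $M_s = Y_s + \int_t^s\Phi_r\,dr$ and $M_T = \phi_T + \int_t^T\Phi_r\,dr$ yields
\[
Y_s = \phi_T + \int_s^T \Phi_r\,dr - \int_s^T X_r\,dW_r,
\]
which is precisely \eqref{another_bsde}; uniqueness of $X_s$ in the stated norm is inherited from that in the representation theorem.

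I expect the main obstacle to be the Hilbert-space-valued martingale representation step, and in particular the mismatch between the mere integrability hypothesis (``finite expectations'') and the $L^2$-type uniqueness norm for $X_s$. The delicate point is guaranteeing that the representing integrand $X_s$ genuinely takes values in $\mc L(E,H^\s(\Tor,\Rnu^2))$ with square-integrable $\mc L$-norm; this is where one must either strengthen the data to square-integrability or invoke a localization argument together with the completeness of $H^\s(\Tor,\Rnu^2)$ and the finite dimensionality of $E$.
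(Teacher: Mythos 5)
Your proposal is correct and follows essentially the same route as the paper: define $Y_s$ by the conditional expectation \eqref{Y_solu} and obtain $X_s$ from the martingale representation theorem for the Hilbert-space-valued martingale $\E[\phi_T+\int_t^T\Phi_r\,dr\,|\,\mc F_s]$ (the paper cites Theorem 6.6 of Ikeda--Watanabe and works componentwise, exactly as you suggest, after a minor technical extension of the processes to $[0,t]$). The integrability caveat you raise at the end applies equally to the paper's argument, which likewise invokes It\^o's isometry and thus implicitly uses square-integrability of the data.
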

  The proof of the lemma uses some ideas from \cite{pardoux90}.
  \begin{proof}
Representation \eqref{Y_solu}  follows from \eqref{another_bsde}.
Let us extend the process $Y_s$ to the entire interval $[0,T]$
by setting $Y_s = Y_t$ for $s\in [0,t]$,
and note that the extended process $Y_s$ is a solution of the SDE
\aa{
 Y_s = \phi_T + \int_s^T \ind_{[t,T]}\Phi_r \, dr - \int_s^T X_rdW_r
 }
  on $[0,T]$.  Let
  $X_s \in \mc L\bigl(E,H^\s(\Tor,\Rnu^2)\bigr)$, $s\in [0,T]$,  be such that 
  \aaa{
  \label{X_solu}
  \E \,\bigl[ \,\phi_T + \int_0^T \ind_{[t,T]}\, \Phi_r\,dr - Y_0 \, | \, \mc F_s \bigr] = 
   \int_0^s X_r \, dW_r.
  }
  The process $X_s$
  exists by the martingale representation theorem.
  Indeed, on the right-hand side of \eqref{X_solu} we have a Hilbert space valued martingale.
 
  By Theorem 6.6 of \cite{Wat},  
  each component of the $H^\al(\Tor,\Rnu^2)$-valued martingale on the right-hand side of \eqref{X_solu}
 can be represented as a sum of real-valued stochastic integrals with respect to the Brownian
 motions $\{\beta^A_k(s),\beta^B_k(s)\}_{k\in \Znu^+_2\cup \{0\}, |k|\lt N}$. Hence, there exist
$\mc F_s$-adapted stochastic processes $\{X_s^{kA}, X_s^{kB}\}_{k\in \Znu^+_2\cup \{0\}, |k|\lt N}$
 such that
  \aa{
 \E \bigl[ \phi_T + \hspace{-2mm} \int_0^T\hspace{-1mm} \ind_{[t,T]} \Phi_r\,dr - Y_0 \, | \, \mc F_s \bigr] = 
 \sum_{\substack{k\in \Znu^+_2\cup \{0\},\\ |k|\lt N}} 
 \int_0^s X_r^{kA}\,d\beta^A_k(r) +\int_0^s X_r^{kB}\,d\beta^B_k(r).
}
Let the process $X_s$ be defined by \eqref{process-x} via
the processes $X_s^{kA}$ and  $X_s^{kB}$, $k\in \Znu^+_2\cup \{0\}, |k|\lt N$.
It\^o's isometry shows that $\E\int_0^T \|X_r\|^2_{\mc L(E,H^\s(\Tor,\Rnu^2))} < \infty$.
Note that for  all $s\in [0,t]$, $\int_0^s X_r dW_r = \int_0^t X_r dW_r$. 
This shows that $X_s=0$ for almost all $\om\in\Om$ and almost all $s\in[0,t]$, and
therefore can be chosen equal to zero on $[0,t]$. Thus, \eqref{X_solu} takes the form:
\aaa{
  \label{X_solu1}
  \E \,\bigl[ \,\phi_T + \int_t^T \Phi_r\,dr - Y_t \, | \, \mc F_s \bigr] = 
   \int_t^s X_r \, dW_r.
  }
It is easy to verify that the pair $(Y_s,X_s)$
  defined by \eqref{Y_solu} and \eqref{X_solu1} solves BSDE \eqref{another_bsde}.
  To prove the uniqueness, note that
  any $\mc F_s$-adapted solution to \eqref{another_bsde}
  takes the form \eqref{Y_solu}, \eqref{X_solu1}.
  Moreover, if the processes $X_s$ and  $X'_s$ 
  satisfy \eqref{X_solu1}, then
  \aa{
  \int_t^T \|X_s - X'_s\|_{\mc L(E,H^\al(\Tor,\Rnu^2))}^2 \, dr =
  \Bigl\|\int_t^T (X_s - X'_s) \, dW_r\Bigr\|_{H^\s(\Tor,\Rnu^2)}^2 = 0.
  }
    \end{proof}
 \begin{proof}[Proof of Theorem \ref{thm2}]
  Let us consider BSDE \eqref{bwd_sde} as an $L_2(\Tor,\Rnu^2)$-valued
  SDE, and 
 $\hat Y_s$  as a function $\Gr \to L_2(\Tor,\Rnu^2)$.
 Since for each $s\in [t,T]$, $y(s,\fdot) \in H^{\s+1}(\Tor,\Rnu^2)$ and
 $\s > 2$ by assumption, then
 $\hat Y_s: \Gr \to L_2(\Tor,\Rnu^2)$ is at least $\C^2$-smooth. 
 Equations \eqref{ns_b} show that the function 
 $\pl_s y(\fdot,\fdot):\, [t,T] \to L_2(\Tor,\Rnu^2)$
 is continuous since $\nab p$,
 $\lap y$, and $(y,\nab y)$ are continuous functions $[t,T] \to L_2(\Tor,\Rnu^2)$
 by Assumption \ref{asn1}. Taking into account that the diffeomorphisms of $\Gr$
 are volume-preserving, we conclude that for each fixed $g\in \Gr$,  $\pl_s \hat Y_s(g): [t,T]\to L_2(\Tor,\Rnu^2)$
 is a continuous function. 
 Hence, $\hat Y_{\bullet}: [t,T] \x \Gr \to L_2(\Tor,\Rnu^2)$ is
 $\C^1$-smooth in $s\in [t,T]$ and $\C^2$-smooth in $g\in \Gr$.
 It\^{o}'s formula is therefore applicable to $\hat Y_s(Z^{t,e}_s)$.
  Below we use the fact that $Z^{t,e}_s$ is a solution to forward SDE \eqref{forward_sde} 
  and the identity
  $\frac{\pl \hat Y_s}{\pl s}(Z^{t,e}_s) = \frac{\pl y(s,\fdot)}{\pl s} \circ Z^{t,e}_s$.
  For the latter derivative we substitute the right-hand
  side of the first equation of \eqref{ns_b}. 
  The notation $\hat X(g)[\hat Y_s(g)]$ 
  (sometimes without square brackets)
  means differentiation
  of the function $\hat Y_s: \, \Gr \to L_2(\Tor,\Rnu^2)$
  along the right-invariant vector field  $\hat X$ on $\Gr$ 
at the point $g\in \Gr$. The same argument as in Remark \ref{remark2} implies that
  $\hat X(g)[\hat Y_s(g)] = \bar \nab_{\hat X}\hat Y_s (g)$.
  Taking into account this argument, we obtain:
 \mmm{
  \label{Y-BSDE-Ito1}
  \hat Y_{s}(Z^{t,e}_{s})
  - \hat h(Z^{t,e}_{T})  =
   -\int_{s}^{T}   \pl_r \hat Y_r(Z^{t,e}_r)\, dr 
  - \int_{s}^{T} dr \,  
  \hat Y_{r}(Z_{r}^{t,e}) [\hat Y_{r}(Z_{r}^{t,e}) ]\\
 - \int_{s}^{T}dr \, {\ts\frac{\epsilon^2}2} \hspace{-4mm}
 \sum_{k\in \Znu_2^+ \cup\{0\}, |k| \lt N}
   \bigl[A_k(Z_{r}^{t,e})A_k(Z_{r}^{t,e})\hat Y_{r}(Z_{r}^{t,e})
   +B_k(Z_{r}^{t,e})B_k(Z_{r}^{t,e})\hat Y_{r}(Z_{r}^{t,e})\bigr] \\
 - \int_{s}^{T}  \epsilon \,
 \sg(Z^{t,e}_r)\hat Y_{r}(Z_{r}^{t,e})\, dW_r.
  }
  Note that
 \aa{
  \hat Y_{r}(Z_{r}^{t,e}) [\hat Y_{r}(Z_{r}^{t,e}) ]
   = [(y(r,\fdot), \nab)y(r,\fdot)]\circ Z^{t,e}_r.
  } 
 Also, let us observe that
  \aa{
  & \frac{\epsilon^2}2 \sum_{k\in \Znu_2^+ \cup\{0\}, |k| \lt N}
   \bigl[A_k(Z_{r}^{t,e})A_k(Z_{r}^{t,e})\hat Y_{r}(Z_{r}^{t,e})
   +B_k(Z_{r}^{t,e})B_k(Z_{r}^{t,e})\hat Y_{r}(Z_{r}^{t,e})\bigr] \\
  &=  \frac{\epsilon^2}2 \sum_{k\in \Znu_2^+ \cup\{0\}, |k| \lt N}
   \bigl[ \bar\nab_{A_k}\bar\nab_{A_k}\hat Y_r(Z^{t,e}_r)+ 
  \bar \nab_{B_k}\bar\nab_{B_k}\hat Y_r(Z^{t,e}_r) \bigr]\\
   & \phantom{\frac{\epsilon^2}2 \sum_{k\in \Znu_2^+ \cup\{0\}, |k| \lt N}
   \bigl[A_k(Z_{r}^{t,e})A_k(Z_{r}^{t,e})\hat Y_{r}(Z_{r}^{t,e})}
   = \nu [\lap y(s,\fdot)] \circ Z^{t,e}_r  
  }
   where 
  the latter equality holds by Lemma \ref{lapl}, and
  $\epsilon > 0$ is chosen so that
  $\frac{\epsilon^2}2\Bigl(1+ \frac12\sum_{k\in\Znu_2^+, |k|\lt N}\frac1{|k|^{2\s}}
  \Bigr) = \nu$. 
 Note that the terms 
 $\bar\nab_{A_k}\bar\nab_{A_k}\hat Y_r(Z^{t,e}_r)$ and  $\bar\nab_{B_k}\bar\nab_{B_k}\hat Y_r(Z^{t,e}_r)$
 are elements of $TG^{\s-1}$, and therefore
 are well defined in $L_2(\Tor,\Rnu^2)$. 
   Continuing \eqref{Y-BSDE-Ito1}, we obtain:
  \mmm{
   \label{Y-BSDE-Ito}
   \hat Y_{s}^{t}(Z^{t,e}_{s})
  - \hat h(Z^{t,e}_{T}) \\ =
  \int_{s}^{T} dr
   \Bigl[\hat V(r,Z^{t,e}_r) + [(y(r,\fdot), \nab)y(r,\fdot)]\circ Z^{t,e}_r
   +\nu [\lap y(r,\fdot)] \circ Z^{t,e}_r \Bigr] \\
    - \int_{s}^{T}[(y(r,\fdot), \nab)y(r,\fdot)]\circ Z^{t,e}_r\, dr
    - \int_{s}^{T} \hspace{-2mm} \nu [\lap y(r,\fdot)] \circ Z^{t,e}_r\, dr\\
     - \int_{s}^{T} \hspace{-2mm} \epsilon\, \sg(Z^{t,e}_r)\hat Y_r(Z_{r}^{t,e})\, dW_r
  =\int_{s}^{T} \hat V(r,Z^{t,e}_r)\, dr
  - \int_{s}^{T}\epsilon\, \sg(Z^{t,e}_r)\hat Y_{r}(Z_{r}^{t,e}) \, dW_r.
 } 
Thus the pair of stochastic processes 
$(\hat Y_s(Z^{t,e}_s),  \epsilon\, \sg(Z^{t,e}_s)\hat Y_{s}(Z_{s}^{t,e}))$
is a solution to BSDE \eqref{bwd_sde} in $L_2(\Tor,\Rnu^2)$. It is $\mc F_s$-adapted
since $Z^{t,e}_s$ is $\mc F_s$-adapted.
By Lemma \ref{lem11}, we know that there exists a unique $\mc F_s$-adapted
solution $(Y^{t,e}_s, X^{t,e}_s)$ to \eqref{bwd_sde} in $H^\s(\Tor,\Rnu^2)$. Clearly, $(Y^{t,e}_s, X^{t,e}_s)$
is also a unique $\mc F_s$-adapted solution
to  \eqref{bwd_sde} in $L_2(\Tor,\Rnu^2)$. Hence, $Y^{t,e}_s = \hat Y_s(Z^{t,e}_s)$
and $\int_t^T\|X^{t,e}_s - \epsilon\, \sg(Z^{t,e}_s)\hat Y_{s}(Z_{s}^{t,e}))\|^2_{\mc L(E, H^\s(\Tor,\Rnu^2))}\, ds = 0$,
and therefore the pair of stochastic processes
$\bigl(\hat Y_s(Z^{t,e}_s), \epsilon\, \sg(Z^{t,e}_s)\hat Y_{s}(Z_{s}^{t,e})\bigr)$
is a unique $\mc F_s$-adapted solution to BSDE \eqref{bwd_sde} in $H^\s(\Tor,\Rnu^2)$.
 The theorem is proved.
 \end{proof}
\section{Some identities involving the Navier--Stokes solution}
 The backward SDE allows us to obtain the representation below
 for the Navier--Stokes solution. Also, it easily implies
 the well-known energy identity for the Navier--Stokes
 equations.
 \subsection{Representation of the Navier--Stokes solution}
 \begin{thm}
 Let $t\in [0,T]$, 
 and let $Z^{t,e}_s$ be the solution to SDE \eqref{forward_sde}
 on $[t,T]$ with the initial condition $Z^{t,e}_t = e$.
 Then the following representation holds for the solution
 $y(t,\fdot)$ to \eqref{ns_consider}.
 \aa{
 y(t,\fdot) = 
 \E\Bigl[\hat h(Z_T^{t,e}) + \int_t^T \nab p(s, \fdot) \circ Z^{t,e}_s ds \Bigr].
 }
 \end{thm}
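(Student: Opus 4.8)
The plan is to obtain the representation by reading off the backward equation of the forward--backward system at the initial time $s=t$. By Theorem \ref{thm2}, the triple $\bigl(Z^{t,e}_s,\,Y^{t,e}_s=\hat Y_s(Z^{t,e}_s),\,X^{t,e}_s=\epsilon\,\sg(Z^{t,e}_s)\hat Y_s(Z^{t,e}_s)\bigr)$ solves the FBSDEs \eqref{fbsde-torus}; in particular the pair $(Y^{t,e}_s,X^{t,e}_s)$ satisfies the backward SDE \eqref{bwd_sde}. Evaluating \eqref{bwd_sde} at $s=t$ gives
\[
Y^{t,e}_t = \hat h(Z^{t,e}_T) + \int_t^T \hat V(r,Z^{t,e}_r)\,dr - \int_t^T X^{t,e}_r\,dW_r.
\]

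First I would identify the left-hand side. Since the forward SDE \eqref{forward_sde} starts from $Z^{t,e}_t=e$, the identity of $\Gr$, and $\hat Y_t(g)=y(t,\fdot)\circ g$, we have $Y^{t,e}_t=\hat Y_t(e)=y(t,\fdot)$, a \emph{deterministic} element of $\Alg$. Next I would rewrite the drift integrand: by hypothesis $V(s,\fdot)=\nab p(s,\fdot)$, so $\hat V(r,\fdot)$ is the right-invariant vector field generated by $\nab p(r,\fdot)$, whence $\hat V(r,Z^{t,e}_r)=\nab p(r,\fdot)\circ Z^{t,e}_r$; likewise $\hat h(Z^{t,e}_T)=h\circ Z^{t,e}_T$. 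Substituting, the identity above becomes
\[
y(t,\fdot) = \hat h(Z^{t,e}_T) + \int_t^T \nab p(r,\fdot)\circ Z^{t,e}_r\,dr - \int_t^T X^{t,e}_r\,dW_r.
\]

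Finally I would take the expectation of both sides in $H^\s(\Tor,\Rnu^2)$. The left-hand side is deterministic, hence unchanged. The step I regard as the only real point to justify is that the stochastic-integral term has zero mean: this holds once $\E\int_t^T\|X^{t,e}_r\|^2_{\mc L(E,H^\s(\Tor,\Rnu^2))}\,dr<\infty$, which is precisely the square-integrability established, via It\^{o}'s isometry, in the proof of Lemma \ref{lem11}. Consequently $\int_t^{\fdot} X^{t,e}_r\,dW_r$ is a genuine $H^\s(\Tor,\Rnu^2)$-valued martingale with zero expectation, and dropping that term yields
\[
y(t,\fdot) = \E\Bigl[\hat h(Z^{t,e}_T) + \int_t^T \nab p(r,\fdot)\circ Z^{t,e}_r\,dr\Bigr],
\]
which is the asserted representation (up to renaming the integration variable $r$ as $s$).
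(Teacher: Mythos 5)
Your argument is exactly the paper's proof: evaluate the backward SDE \eqref{bwd_sde} at $s=t$, use $Y^{t,e}_t=\hat Y_t(e)=y(t,\fdot)$ and $\hat V(r,Z^{t,e}_r)=\nab p(r,\fdot)\circ Z^{t,e}_r$, and take expectations using $\E\int_t^T X^{t,e}_r\,dW_r=0$. Your added justification of the zero-mean property via the square-integrability from Lemma \ref{lem11} is a welcome extra detail, but the route is the same.
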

 \begin{proof}
 Note that $\hat Y_t(Z^{t,e}_t) = y(t,\fdot)$, and $\E[\int_t^T X_r^{t,e} dW_r] = 0$.
 Taking the expectation from the both parts of \eqref{bwd_sde} at time $s=t$
 we obtain the above representation.
 \end{proof}

 \subsection{A simple derivation of the energy identity}
It\^{o}'s formula applied to the squared $L_2(\Tor,\Rnu^2)$-norm of
 $Y^{t,e}_s$ gives:
 \mmm{
 \label{Ito-appl}
 \|Y^{t,e}_s\|_{L_2}^2 = \|\hat h(Z^{t,e}_T)\|^2_{L_2}  \\ +
 2 \int_s^T (Y^{t,e}_r, \hat V(Z^{t,e}_r))_{L_2} dr  - 2\int_s^T (Y^{t,e}_r,X^{t,e}_r dW_r)_{L_2}
  - \int_s^T \|X^{t,e}_s\|_{L_2}^2 dr.
 }
 Using representation \eqref{X} for the process
 $X^{t,e}_s$ we obtain:
 \aa{
 & \|X^{t,e}_s\|_{L_2}^2 = \epsilon^2 \Bigl[
 \sum_{k\in \Znu_2^+\cup \{0\},\, |k| \lt N}
  \|\nab_{\bar A_k} y(s,\fdot)\|_{L_2}^2 + \|\nab_{\bar B_k} y(s,\fdot)\|^2_{L_2}
 \,\Bigr]\\
 & =\epsilon^2 \Bigl[\,\sum_{k\in\Znu_2^+,\, |k| \lt N} \frac1{|k|^{2\s+2}} \,\|(\bar k,\nab y(s,\fdot))\|^2_{L_2}
 +\|\nab y(s,\fdot)\|^2_{L_2}\,\Bigr] \\ 
 &= \epsilon^2 \Bigl[\,{\ts\frac12} \hspace{-4mm}\sum_{k\in\Znu_2^+, \, |k| \lt N} \hspace{-1mm} \frac1{|k|^{2\s+2}} 
  \bigl( \|(\bar k,\nab y(s,\fdot))\|^2_{L_2}+ \|(k,\nab y(s,\fdot))\|^2_{L_2}\bigr)
  +\|\nab y(s,\fdot)\|^2_{L_2}\Bigr] \\ 
 & = \epsilon^2 \Bigl(1 + \frac12\sum_{k\in\Znu_2^+, \,|k| \lt N} \frac1{|k|^{2\s}} \Bigr)\, \|\nab y(s,\fdot)\|^2_{L_2} 
  = 2\,\nu\, \|\nab y(s,\fdot)\|^2_{L_2}.
}

 Taking the expectation in \eqref{Ito-appl} and using the volume-preserving
 property of $Z^{t,e}_s$, we obtain:
\aa{
\|y(s,\fdot)\|_{L_2}^2 + 2\nu \int_s^T  \|\nab y(r,\fdot)\|_{L_2}^2 dr = \|h\|_{L_2}^2.
}

\section{Constructing the solution to the Navier--Stokes equations
 from a solution to the FBSDEs}
 Let us prove now a result which is, in some sense, a converse of Theorem \ref{thm2}.
In this section we consider \eqref{fbsde-torus} as a system of forward and backward SDEs 
in the Hilbert space $H^\al(\Tor,\Rnu^2)$, where $\al\gt 3$. 
As before, let $\hat V (s, Z^{t,e}_s)$ denote $\nab p(s,\fdot)\circ Z^{t,e}_s$,
and let  $\mc F_s$ denote the filtration  $\sg\{W_r, r \in [0,s]\}$.
\begin{thm}
 \label{converse}
Assume, for an $H^{\al+1}$-smooth function $p(s,\fdot)$, $s\in [0,T]$, 
and for any $t\in (0,T)$, the existence of an $\mc F_s$-adapted
solution $(Z^{t,e}_s,Y^{t,e}_s,X^{t,e}_s)$ to \eqref{fbsde-torus} on $[t,T]$
such that the processes  $Z^{t,e}_s$ and $Y^{t,e}_s$ have a.s. continuous 
trajectories and such that $Z^{t,e}_s$ take values in $\Gr$.
Then  there exists  $T_0>0$ such that for all $T<T_0$
there exists a deterministic function $y(s,\fdot) \in T_e\Gr$  on $[0,T]$,  such that a.s. on $[t,T]$
the relation $Y^{t,e}_s = y(s,\fdot) \circ Z^{t,e}_s$ holds. Moreover,
the pair of functions $(y,p)$ solves the backward Navier--Stokes
equations  \eqref{ns_consider} on $[0,T]$.
 \end{thm}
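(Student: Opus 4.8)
The plan is to construct the claimed deterministic field as the \emph{decoupling field} of the system, by setting $y(s,\fdot) = Y^{s,e}_s$, the initial value of the backward component of the FBSDE \eqref{fbsde-torus} started at time $s$ from the identity $e$. The argument then splits into three tasks: (i) show that $y(s,\fdot)$ is a well-defined deterministic element of $\Alg$; (ii) prove the decoupling representation $Y^{t,e}_s = y(s,\fdot)\circ Z^{t,e}_s$; and (iii) insert this representation into the backward equation and reverse the It\^o computation from the proof of Theorem \ref{thm2} to recover \eqref{ns_consider}.

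The structural fact I would establish first is the \textbf{right-invariance of the whole system}. If $(Z^{t,e}_s, Y^{t,e}_s, X^{t,e}_s)$ solves \eqref{fbsde-torus} started from $e$, then for any fixed $g\in\Gr$ the right-translated triple $(Z^{t,e}_s\circ g,\, Y^{t,e}_s\circ g,\, X^{t,e}_s\circ g)$ solves the same system with initial datum $g$. This is a direct verification using Lemma \ref{lem_t1}: the diffusion field satisfies $\sg(\eta\circ g) = \sg(\eta)\circ g$, the coupling term satisfies $\hat V(s,\eta)\circ g = \nab p(s,\fdot)\circ\eta\circ g = \hat V(s,\eta\circ g)$, and likewise $\hat h(\eta)\circ g = \hat h(\eta\circ g)$, so the terminal and initial conditions transform consistently. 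In particular $Y^{s,g}_s = y(s,\fdot)\circ g$. Because the FBSDE started at $(s,e)$ is driven only by the increments $\{W_r - W_s\}_{r\ge s}$, which are independent of $\mc F_s$, the value $Y^{s,e}_s$ is independent of $\mc F_s$; being also $\mc F_s$-measurable it is deterministic, and it lies in $\Alg$ since $\hat h$ and $\hat V$ take values in divergence-free $H^\al$-fields and conditional expectation preserves this, which settles (i). For (ii) I would use the flow property of the forward diffusion together with uniqueness: the restriction to $[s,T]$ of the solution started at $(t,e)$ and the $\mc F_s$-conditionally right-translated solution $(Z^{s,e}_r\circ Z^{t,e}_s,\dots)$ both solve \eqref{fbsde-torus} on $[s,T]$ with the same random initial datum $\xi=Z^{t,e}_s$; by uniqueness they agree, and evaluating at $r=s$ gives $Y^{t,e}_s = Y^{s,\xi}_s = y(s,\fdot)\circ Z^{t,e}_s$.

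Granting the representation, task (iii) reverses Theorem \ref{thm2}. Writing $Y^{t,e}_s = \hat Y_s(Z^{t,e}_s)$ with $\hat Y_s$ the right-invariant field generated by $y(s,\fdot)$, It\^o's formula applied to the composition map $g\mapsto y(s,\fdot)\circ g$ along the forward SDE \eqref{forward_sde} produces a drift consisting of the time-derivative $\partial_s y(s,\fdot)\circ Z^{t,e}_s$, the first-order transport term $[(y,\nab)y]\circ Z^{t,e}_s$ (as in \eqref{diff_along_vfield}), and the second-order term which, by Corollary \ref{cor_laplacian} and the choice of $\epsilon$ with $\tfrac{\epsilon^2}{2}\bigl(1+\tfrac12\sum_{k\in\Znu_2^+,|k|\lt N}|k|^{-2\s}\bigr)=\nu$, equals $\nu\,\lap y(s,\fdot)\circ Z^{t,e}_s$. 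Matching this drift against the drift $-\hat V(s,Z^{t,e}_s) = -\nab p(s,\fdot)\circ Z^{t,e}_s$ of the backward equation \eqref{bwd_sde}, and using that composition with the volume-preserving diffeomorphism $Z^{t,e}_s$ is injective on $H^\al$-fields, yields $\partial_s y = -[\nab p + (y,\nab)y + \nu\lap y]$ pointwise on $\Tor$, which integrates to \eqref{ns_consider}; the terminal identity $Y^{t,e}_T = \hat h(Z^{t,e}_T)$ forces $y(T,\fdot)=h$, and $y(s,\fdot)\in\Alg$ gives $\div y = 0$.

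The main obstacle is the uniqueness statement underlying (ii), and it is precisely here that the restriction $T<T_0$ enters: the forward and backward equations are fully coupled through the drift $Y^{t,e}_s$, so unconditional uniqueness cannot be expected, and I would instead prove uniqueness on a short interval by a contraction argument, regarding the map $y\mapsto(\text{solve the decoupled forward SDE \eqref{forward_sde}, then the linear BSDE of Lemma \ref{lem11}, then read off the new field})$ as a self-map that is a strict contraction for $T$ small, in the spirit of \cite{Delarue}. The genuinely new difficulty compared with \cite{Delarue} is that this contraction must be carried out in the infinite-dimensional $H^\al$-setting on $\Gr$, so the stability estimates have to be proved relative to the strong metric \eqref{strong} and with enough spatial regularity ($\al\gt 3$, so that $y(s,\fdot)$ is $\C^2$) together with temporal continuity of the data, which is exactly what makes the composition map $\C^1$ in $s$ and $\C^2$ in $g$ and hence renders the It\^o computation in (iii) licit.
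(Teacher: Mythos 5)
Your overall architecture --- defining $y(s,\fdot)=Y^{s,e}_s$, establishing right-invariance of the whole system, combining the flow property with short-time uniqueness to get the decoupling relation, and then reversing the It\^o computation of Theorem \ref{thm2} --- is exactly the route the paper takes (Lemmas \ref{lem10}--\ref{lem_last}). Two steps, however, are not correctly justified as you state them.

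First, your argument that $y(s,\fdot)\in\Alg$ ``since $\hat h$ and $\hat V$ take values in divergence-free $H^\al$-fields and conditional expectation preserves this'' does not work: $\hat V(s,g)=\nab p(s,\fdot)\circ g$ is a \emph{gradient} field composed with $g$, not a divergence-free field, and $\hat h(Z^{t,e}_T)=h\circ Z^{t,e}_T$ lives in $T_{Z^{t,e}_T}\Gr$, i.e.\ it is divergence-free only after right-translation back to the identity, not as a map $\Tor\to\Rnu^2$. Hence the representation $Y_s=\E\,[\,\hat h(Z^{t,e}_T)+\int_s^T\hat V(r,Z^{t,e}_r)\,dr\,|\,\mc F_s]$ gives no visible reason for $y(s,\fdot)$ to be divergence-free. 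The paper needs a separate geometric argument here (Lemma \ref{lem_last}): it rewrites the forward SDE in the Dalecky--Belopolskaya form on $G^\al$, considers the curve $\gm_\zeta=\E\,[\exp^{-1}Z^{t,e}_\zeta]$ in the tangent space, and identifies $y(t,\fdot)$ with $\pl_\zeta\gm_\zeta|_{\zeta=t}$, which forces $y(t,\fdot)\in T_e\Gr$. You need some substitute for this step.

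Second, your determinism argument (``driven only by the increments $W_r-W_s$, hence independent of $\mc F_s$'') presupposes that the solution is measurable with respect to the $\sg$-algebra generated by the post-$s$ increments, which is not among the hypotheses --- the theorem only assumes $\mc F_s$-adaptedness with respect to the full filtration. The paper instead extends the solution by constants to $[0,s]$ and applies Blumenthal's zero-one law to the $\mc F_0$-measurable variable $Y^{s,e}_0=Y^{s,e}_s$; that is the clean fix. The remaining ingredients --- the stepwise approximation needed to push the \emph{random} right-translation $R_\xi$ with $\xi=Z^{t,e}_s$ under the stochastic integrals (Lemma \ref{lem10}), and the Gronwall estimates that give both uniqueness for $T<T_0$ and the continuity of $t\mto Y^{t,e}_t$ in $H^2$ (Lemma \ref{lem14}), the latter being what lets one identify $\pl_t y$ in the limit $\dl\to 0$ --- match the paper's proof in substance, though you leave the continuity of $y$ in $t$ implicit where the paper proves it by differentiating the FBSDE twice in $\te$.
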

 Lemmas \ref{lem10}--\ref{lem_last} below are the steps in the proof of
 Theorem \ref{converse}.
 \begin{lem}
 \label{lem10}
For all $t\in [0,T)$ and for any $\mc F_t$-measurable $\Gr$-valued random variable $\xi$,
the triple of stochastic processes
\aaa{
 \label{form_of_solution}
 (Z_s^{t,\xi}, Y_s^{t,\xi}, X_s^{t,\xi}) =   
 (Z^{t,e}_s\circ \xi,Y^{t,e}_s\circ \xi, X^{t,e}_s\circ \xi)
 }
is $\mc F_s$-adapted and solves the FBSDEs
 \eee{
 \label{fbsde-xi-t}
 \begin{cases}
  Z_s^{t,\xi} = \xi + \int_t^s Y_r^{t,\xi} \, dr + \int_t^s \sg(Z_r^{t,\xi})\, dW_r \\
  Y_s^{t,\xi} = h(Z_T^{t,\xi}) + \int_s^T \hat V(r,Z_r^{t,\xi}) dr - \int_s^T X_r^{t,\xi} \, dW_r
 \end{cases}
 }
  on the interval $[t,T]$ in the space $H^\al(\Tor,\Rnu^2)$.
  \end{lem}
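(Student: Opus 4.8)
The plan is to work throughout in the $H^\al(\Tor,\Rnu^2)$-valued formulation announced at the start of this section, and to deduce \eqref{fbsde-xi-t} from \eqref{fbsde-torus} by applying the right-composition operator $R_\xi\colon f\mto f\circ\xi$. For a fixed $\xi\in\Gr$ this map is \emph{linear} in $f$, and by the $\om$-lemma together with the composition estimates underlying Lemma \ref{Glik_Sob_map} it is a bounded operator $H^\al(\Tor,\Rnu^2)\to H^\al(\Tor,\Rnu^2)$ (here $\al>3$ guarantees the necessary regularity, and the volume-preserving property of $\xi$ keeps the $L_2$-norm invariant). Since $\xi$ is $\mc F_t$-measurable, $R_\xi$ is an $\mc F_t$-measurable bounded linear operator. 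The whole argument then consists in applying $R_\xi$ to the two integral equations of \eqref{fbsde-torus} and identifying the resulting coefficients.

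First I would record the right-invariance identities that make the coefficients equivariant under $R_\xi$. By Lemma \ref{lem_t1} and Lemma \ref{right-invariant} one has $A_k(g\circ\xi)=\bar A_k\circ g\circ\xi=A_k(g)\circ\xi$ and likewise $B_k(g\circ\xi)=B_k(g)\circ\xi$, whence $\sg(g\circ\xi)=R_\xi\,\sg(g)$ as operators $E\to H^\al$. Similarly $\hat V(r,g\circ\xi)=\nab p(r,\fdot)\circ g\circ\xi=R_\xi\,\hat V(r,g)$ and $\hat h(g\circ\xi)=h\circ g\circ\xi=R_\xi\,\hat h(g)$. Consequently, with $Z^{t,\xi}_r=Z^{t,e}_r\circ\xi$, the composed coefficients $\sg(Z^{t,e}_r)\circ\xi$, $\hat V(r,Z^{t,e}_r)\circ\xi$ and $\hat h(Z^{t,e}_T)\circ\xi$ are exactly $\sg(Z^{t,\xi}_r)$, $\hat V(r,Z^{t,\xi}_r)$ and $\hat h(Z^{t,\xi}_T)$, so once $R_\xi$ has been moved inside the integrals the right-hand sides will already be in the form demanded by \eqref{fbsde-xi-t}.

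Next I would commute $R_\xi$ with the integrals. For the Bochner integrals in $dr$ this is immediate from linearity and boundedness. The delicate point is passing $R_\xi$ through the Itô integrals $\int_t^s\sg(Z^{t,e}_r)\,dW_r$ and $\int_s^T X^{t,e}_r\,dW_r$: here I would condition on $\mc F_t$. Since the increments of $W$ on $[t,T]$ are independent of $\mc F_t$, conditionally on $\mc F_t$ the operator $R_\xi$ is a fixed deterministic bounded linear operator, and the identity $R_\xi\int\Phi\,dW=\int R_\xi\Phi\,dW$ holds by the standard rule for a deterministic operator acting on a Hilbert-space-valued stochastic integral (verified first for simple integrands and then extended by the Itô isometry). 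As this holds almost surely conditionally, it holds almost surely. Applying $R_\xi$ to the forward equation and using $e\circ\xi=\xi$ yields the first line of \eqref{fbsde-xi-t}; applying it to the backward equation yields the second. Adaptedness of $(Z^{t,\xi}_s,Y^{t,\xi}_s,X^{t,\xi}_s)$ is then clear, since composition is continuous and $\xi$ is $\mc F_t\sub\mc F_s$-measurable while $Z^{t,e}_s,Y^{t,e}_s,X^{t,e}_s$ are $\mc F_s$-measurable; moreover $Z^{t,\xi}_s=Z^{t,e}_s\circ\xi\in\Gr$ by the group structure.

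The main obstacle is precisely the justification of this last step — that the random but $\mc F_t$-measurable operator $R_\xi$ may be moved inside the Itô integral — for which the conditioning argument above, together with the verification that $R_\xi$ is a genuine bounded linear operator on $H^\al(\Tor,\Rnu^2)$ for almost every $\xi$, is the essential ingredient; everything else reduces to bookkeeping with the right-invariance identities.
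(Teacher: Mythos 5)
Your overall strategy coincides with the paper's: apply the right translation $R_\xi$ to both equations of \eqref{fbsde-torus}, use the right-invariance identities $\sg(g)\circ\xi=\sg(g\circ\xi)$, $\hat V(r,g)\circ\xi=\hat V(r,g\circ\xi)$, $\hat h(g)\circ\xi=\hat h(g\circ\xi)$ to recognize the composed coefficients, and reduce everything to commuting $R_\xi$ with the two It\^o integrals. You have also correctly isolated that interchange as the only delicate point.

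Where you diverge is in how you justify that interchange, and this is where your argument has a genuine gap. You invoke a conditioning/freezing heuristic: ``conditionally on $\mc F_t$ the operator $R_\xi$ is a fixed deterministic bounded linear operator,'' so the standard rule for deterministic operators applies, ``and as this holds almost surely conditionally, it holds almost surely.'' Conditioning on $\mc F_t$ does not literally turn a random operator into a deterministic one; what you are appealing to is a substitution (freezing) theorem for stochastic integrals, namely that $\bigl(\int_s^S\Phi_r\,dW_r\bigr)\circ\xi=\int_s^S\Phi_r\circ\xi\,dW_r$ a.s.\ when $\xi$ is $\mc F_t$-measurable and the integrand is driven by the increments of $W$ after $t$. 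Such a theorem is not automatic: it requires a jointly measurable, suitably continuous version of the parametrized family $g\mapsto\int_s^S\Phi_r\circ g\,dW_r$, and establishing that is precisely the content of the step you skipped. The paper supplies it explicitly: it first proves the identity for $\mc F_t$-measurable stepwise $\xi=\sum_i g_i\ind_{A_i}$ (where it is elementary, since $\ind_{A_i}$ is $\mc F_t$-measurable and can be pulled inside the integral over $[s,S]$ with $s\gt t$), then approximates a general $\xi$ uniformly on $\Tor$ by such stepwise functions, and passes to the limit in $L_2(\Tor,\Rnu^2)$ using the volume-preserving property of $\xi,\xi_n$ (which makes $\|I(\Phi)\circ\xi_n\|_{L_2}$ independent of $n$, so Lebesgue's theorem applies), the continuity of the integrands in $\te$, and It\^o's isometry. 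Your parenthetical remark that volume preservation keeps the $L_2$-norm invariant shows you are aware of the relevant mechanism, but as written your proof asserts the conclusion of the approximation argument rather than carrying it out. To repair the proposal, either perform the stepwise approximation as in the paper, or state and verify the hypotheses of a concrete substitution theorem for Hilbert-space-valued It\^o integrals; the one-line conditioning argument does not by itself constitute a proof.
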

\begin{proof}
Let us apply the operator $R_\xi$ of the right translation to the both sides
of FBSDEs \eqref{fbsde-torus}.
We only have to prove that we are allowed to write $R_\xi$ under
the signs of both stochastic integrals in \eqref{fbsde-torus}.
Let us prove that it is true for an $\mc F_t$-measurable stepwise function
  $\xi=\sum_{i=1}^\infty g_i\ind_{A_i}$,
where $g_i\in \Gr$ and  the sets $A_i$ are $\mc F_t$-measurable.
Indeed, let $s$ and $S$ be such that
$t\lt s < S \lt T$, and let $\Phi_r$ be an $\mc F_r$-adapted
stochastically integrable process. We obtain:
\mm{
\int_s^S \Phi_r \, dW_r \circ \sum_{i=1}^\infty g_i \ind_{A_i} = 
\sum_{i=1}^\infty \ind_{A_i} \int_s^S \Phi_r \circ g_i \, dW_r =
\sum_{i=1}^\infty \int_s^S \ind_{A_i} \Phi_r \circ g_i \, dW_r \\
= \int_s^S \Phi_r \circ \sum_{i=1}^\infty g_i \ind_{A_i} \, dW_r.
}
Next,  we find a sequence of $\mc F_t$-measurable stepwise functions 
converging to $\xi$ in the space of continuous functions $\C(\Tor,\Rnu^2)$. 
This is possible due to
the separability of $\C(\Tor,\Rnu^2)$. Indeed, let us consider a countable
number of disjoint Borel sets $O^n_i$ covering $\C(\Tor,\Rnu^2)$,
and such that their diameter in the norm of $\C(\Tor,\Rnu^2)$ 
is smaller than $\frac1{n}$. 
Let $A_i^n=\xi^{-1}(O_i^n)$ and $g_i^n \in O_i^n\cap \Gr$.
Define $\xi_n = \sum_{i=1}^\infty g_i^n \ind_{A_i^n}$. 
Then it holds that for all $\om\in\Om$, $\|\xi-\xi_n\|_{\C(\Tor,\Rnu^2)} < \frac1{n}$.
Let $I(\Phi)$ and $I(\Phi\circ \xi)$ denote
$\int_s^S \Phi_r\, dW_r$ and resp. $\int_s^S \Phi_r\circ \xi \, dW_r$.
We have to prove that a.s. $I(\Phi)\circ \xi = I(\Phi\circ \xi)$.
For this it suffices to prove that
\aaa{
\label{61}
\lim_{n\to\infty} \E \|I(\Phi)\circ \xi_n - I(\Phi)\circ \xi\|^2_{L_2(\Tor,\Rnu^2)} &=0, \\
\label{62}
\lim_{n\to\infty} \E \|I(\Phi\circ \xi_n) - I(\Phi\circ \xi)\|^2_{L_2(\Tor,\Rnu^2)} &=0.
}
Due to the volume-preserving property of $\xi$ and $\xi_n$,
$\|I(\Phi)\circ \xi_n\|^2_{L_2(\Tor,\Rnu^2)}=\|I(\Phi)\circ \xi\|^2_{L_2(\Tor,\Rnu^2)}=\|I(\Phi)\|^2_{L_2(\Tor,\Rnu^2)}$.
Hence, by Lebesgue's theorem, in \eqref{61} we can 
pass to the limit under the expectation sign.
 Relation \eqref{61} holds then by
the continuity of $I(\Phi)$ in $\te\in \Tor$. 
To prove \eqref{62} we observe that by It\^o's isometry, the limit in \eqref{62} equals to 
$\lim_{n\to\infty}\E\int_s^S \|\Phi_r\circ \xi_n - \Phi_r \circ \xi\|^2_{L_2(\Tor,\Rnu^2)} dr$.
The same argument that we used to prove \eqref{61} implies that we can pass  to
the limit under the expectation and the integral signs.
Relation \eqref{62} follows from the continuity of $\Phi_r$ in $\te\in\Tor$.

Hence,
 $(Z^{t,e}_s\circ \xi,Y^{t,e}_s\circ \xi, X^{t,e}_s\circ \xi)$
 is a solution to  \eqref{fbsde-xi-t}.
 This solution is clearly $\mc F_s$-adapted.
\end{proof}
Lemmas \ref{lem1}--\ref{y-smooth} below use some ideas and constructions from \cite{Delarue}.
\begin{lem}
 \label{lem1}
 The map $[0,T] \x \Tor \to \Rnu^2$, $(t,\te) \mto Y^{t,e}_t(\te)$  is deterministic.
 \end{lem}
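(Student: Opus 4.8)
The plan is to read $Y^{t,e}_t$ off the backward equation and then show that, although it is a priori only $\mc F_t$-measurable, it is in fact independent of the past $\mc F_t$, which forces it to be deterministic. Setting $s=t$ in the backward equation of \eqref{fbsde-torus} and taking the conditional expectation with respect to $\mc F_t$, the stochastic integral $\int_t^T X^{t,e}_r\,dW_r$ has vanishing $\mc F_t$-conditional expectation while $Y^{t,e}_t$ is itself $\mc F_t$-measurable, so that
\[
Y^{t,e}_t = \E\Bigl[\hat h(Z^{t,e}_T) + \int_t^T \hat V(r,Z^{t,e}_r)\,dr \,\Big|\, \mc F_t\Bigr].
\]
Everything then reduces to understanding the conditional law, given $\mc F_t$, of the forward process $(Z^{t,e}_r)_{r\in[t,T]}$.

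The key step is to argue that this conditional law is deterministic, i.e. that the whole solution triple on $[t,T]$ is measurable with respect to the future increments $\mc G_t = \sg\{W_r - W_t : r\in[t,T]\}$, which is independent of $\mc F_t$. The initial datum $Z^{t,e}_t = e$ is deterministic, the coefficient $\hat V(r,\fdot)$ is a given deterministic (indeed $H^{\al+1}$-smooth) function, and the system \eqref{fbsde-torus} on $[t,T]$ is driven solely by the increments $W_\cdot - W_t$; no dependence on the Brownian path before $t$ enters the equations. Invoking the uniqueness of the $\mc F_s$-adapted solution (the $Y$-part is pinned down by the representation in Lemma \ref{lem11}, and the right-translation structure of Lemma \ref{lem10} transports solutions consistently), any such solution must coincide with the one built from $\mc G_t$ alone; in particular $Z^{t,e}_T$ and $(Z^{t,e}_r)_{r\in[t,T]}$ are $\mc G_t$-measurable, and hence so are $\hat h(Z^{t,e}_T)$ and $\int_t^T \hat V(r,Z^{t,e}_r)\,dr$.

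Since these random variables are independent of $\mc F_t$, the conditional expectation above collapses to the unconditional one,
\[
Y^{t,e}_t = \E\Bigl[\hat h(Z^{t,e}_T) + \int_t^T \hat V(r,Z^{t,e}_r)\,dr\Bigr],
\]
which is a deterministic element of $H^\al(\Tor,\Rnu^2)$; evaluating at $\te\in\Tor$ yields the deterministic map $(t,\te)\mto Y^{t,e}_t(\te)$. Equivalently, $Y^{t,e}_t$ is simultaneously $\mc F_t$-measurable and $\mc G_t$-measurable, hence a.s. constant by independence of the two $\sg$-algebras. The main obstacle is precisely this middle step: because the forward and backward equations are coupled, one cannot freeze the forward process in advance, so the measurability with respect to the future filtration $\mc G_t$ must be extracted from uniqueness of the FBSDE solution together with the translation invariance supplied by Lemma \ref{lem10}, rather than from a naive pathwise construction of $Z^{t,e}$ from the increments $W_\cdot - W_t$.
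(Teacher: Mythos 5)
Your overall logical skeleton --- $Y^{t,e}_t$ is $\mc F_t$-measurable, and if it were also measurable with respect to the future-increment $\sg$-algebra $\mc G_t=\sg\{W_r-W_t:\ r\in[t,T]\}$ it would be a.s.\ constant by independence --- is sound, and the first reduction (writing $Y^{t,e}_t=\E[\hat h(Z^{t,e}_T)+\int_t^T\hat V(r,Z^{t,e}_r)\,dr\,|\,\mc F_t]$ from the backward equation) is fine. The problem is that the middle step, $\mc G_t$-measurability of the solution on $[t,T]$, is exactly the hard part, and you do not actually close it: you name it as the main obstacle and then appeal to ``uniqueness of the FBSDE solution'' together with Lemmas \ref{lem10} and \ref{lem11}, but none of these delivers it. Lemma \ref{lem11} gives uniqueness of the \emph{decoupled} BSDE for a prescribed driver and terminal value, not of the coupled system; Lemma \ref{lem10} is about right translation of the initial condition by an $\mc F_t$-measurable $\xi$ and says nothing about which $\sg$-algebra the solution is measurable with respect to. Uniqueness of the coupled FBSDE is proved in the paper only \emph{after} Lemma \ref{lem14}, and only under the restriction $T<T_0$, so invoking it here is both circular in the ordering of the argument and would saddle Lemma \ref{lem1} with a smallness condition it does not need. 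Moreover, even granting uniqueness, the comparison argument requires you to \emph{exhibit} a solution adapted to the increment filtration $\sg\{W_r-W_t:\ r\in[t,s]\}$; the standing hypothesis of Theorem \ref{converse} only provides an $\mc F_s$-adapted solution, and because the system is coupled there is no Picard-type construction available in the paper from which $\mc G_t$-measurability could be read off.

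The paper takes a shorter route that bypasses the future-increment filtration entirely: extend the triple to $[0,t]$ by setting $Z^{t,e}_s=e$, $Y^{t,e}_s=Y^{t,e}_t$, $X^{t,e}_s=0$, observe that the extended triple solves the FBSDE \eqref{extended_fbsde} on all of $[0,T]$ (the coefficients being multiplied by $\ind_{[t,T]}$), so that $Y^{t,e}_0$ is the time-zero value of an adapted solution and hence $\mc F_0$-measurable; Blumenthal's zero-one law then makes it deterministic, and $Y^{t,e}_t=Y^{t,e}_0$. If you want to salvage your version, you would need to either adopt this extension device or give an honest construction of a solution measurable with respect to the shifted Brownian filtration and only then invoke uniqueness --- as it stands, the claimed $\mc G_t$-measurability is asserted rather than proved.
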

 \begin{proof}
 Let us extend the solution $(Z_s^{t,e}, Y_s^{t,e}, X_s^{t,e})$  to
  the interval $[0,t]$ by setting $Z^{t,e}_s = e$, $Y^{t,e}_s = Y^{t,e}_t$,
  $X^{t,e}_s = 0$ for all $s\in [0,t]$. The extended process
  solves the problem:
  \aaa{
 \label{extended_fbsde}
 \begin{cases}
  Z_s^{t,e} = e + \int_0^s \ind_{[t,T]}(r) Y_r^{t,e} \, dr + \int_0^s \ind_{[t,T]}(r)\sg(Z_r^{t,e})\, dW_r \\
  Y_s^{t,e} = h(Z_T^{t,e}) + \int_s^T \ind_{[t,T]}(r) \hat V(r,Z_r^{t,e}) \, dr - \int_s^T X_r^{t,e} \,dW_r.
 \end{cases}
 }
  The random vector $Y_0^{t,e}$ is $\mc F_0$-measurable, and hence
  is deterministic by Blumenthal's zero-one law.
  Since $Y_t^{t,e} = Y_0^{t,e}$, the result follows. 
 \end{proof}
%
%
\begin{lem} 
\label{lem14}
There exists a constant $T_0 > 0$ such that for $T<T_0$
the function $[0,T] \to H^2(\Tor,\Rnu^2)$, $t\mto Y^{t,e}_t$ is continuous.
\end{lem}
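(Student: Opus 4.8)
The plan is to identify $Y^{t,e}_t$ with the deterministic field $y(t,\fdot)\in\Alg$ furnished by Lemma \ref{lem1}, and to control its increment in $t$ through a cocycle identity linking the solutions started at different initial instants. Fix $0\lt t\lt t'\lt T$. By Lemma \ref{lem10} applied at the initial time $t'$ with the $\mc F_{t'}$-measurable initial datum $\xi=Z^{t,e}_{t'}$, the triple $(Z^{t',e}_s\circ Z^{t,e}_{t'},\,Y^{t',e}_s\circ Z^{t,e}_{t'},\,X^{t',e}_s\circ Z^{t,e}_{t'})$ solves \eqref{fbsde-xi-t} on $[t',T]$. On the other hand, the restriction of $(Z^{t,e}_s,Y^{t,e}_s,X^{t,e}_s)$ to $[t',T]$ solves the very same FBSDE, with the same initial datum $Z^{t,e}_{t'}$ at time $t'$ and the same terminal condition; hence, by the uniqueness (for $T\lt T_0$) of the $\mc F_s$-adapted solution to \eqref{fbsde-xi-t}, the two triples coincide. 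Evaluating at $s=t'$ yields the cocycle identity
\aaa{
\label{cocycle}
Y^{t,e}_{t'} = y(t',\fdot)\circ Z^{t,e}_{t'}.
}

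Next I would take expectations in the backward equation of \eqref{fbsde-torus} between $t$ and $t'$. Since $Y^{t,e}_t=y(t,\fdot)$ is deterministic and $\E\int_t^{t'}X^{t,e}_r\,dW_r=0$, using $\hat V(r,Z^{t,e}_r)=\nab p(r,\fdot)\circ Z^{t,e}_r$ together with \eqref{cocycle} gives
\aaa{
y(t,\fdot) - y(t',\fdot) &= \E\bigl[y(t',\fdot)\circ Z^{t,e}_{t'} - y(t',\fdot)\bigr] \notag \\
&\quad + \E\int_t^{t'} \nab p(r,\fdot)\circ Z^{t,e}_r\, dr. \label{diffy}
}
As $t\uparrow t'$ the forward flow $Z^{t,e}_{t'}$ runs only over the shrinking interval $[t,t']$, so it suffices to show that both terms on the right of \eqref{diffy} tend to $0$ in $H^2(\Tor,\Rnu^2)$ as $|t'-t|\to0$; the same derivation with base time $t_0$ and later time $t>t_0$ (running $Z^{t_0,e}$ on $[t_0,t]$) produces an identity of identical structure and handles the limit from above.

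The two estimates rest on a priori bounds that I would establish first. From $y(t',\fdot)=\E\bigl[h\circ Z^{t',e}_T+\int_{t'}^T\nab p(r,\fdot)\circ Z^{t',e}_r\,dr\bigr]$ (the expectation of the backward equation at $s=t'$), together with the square-integrability and the $t'$-uniform (for $T\lt T_0$) moment bounds on the forward flow $Z^{t',e}$ in $\Gr$ and the $\om$-lemma composition estimates, one obtains a constant $C$ with $\sup_{t'\in[0,T]}\|y(t',\fdot)\|_{H^3}\lt C$. Likewise, since $\sg$ is bounded in the strong norm (Lemma \ref{Gsv-hilbert}), hence in the $H^2$-operator norm, and $\E\,\sup_r\|Y^{t,e}_r\|^2_{H^2}\lt\infty$, the forward equation and It\^o's isometry give $\E\,\|Z^{t,e}_{t'}-e\|^2_{H^2}\lt C'\,|t'-t|$, i.e. the flow leaves $e$ at most at H\"older-$\tfrac12$ rate. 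The second term of \eqref{diffy} is then bounded in $H^2$ by $|t'-t|\,\sup_r\E\|\nab p(r,\fdot)\circ Z^{t,e}_r\|_{H^2}=O(|t'-t|)$.

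For the first term of \eqref{diffy} I would invoke the tame (Moser-type) composition estimate $\|F\circ g-F\|_{H^2}\lt c\,\|F\|_{H^3}\,\|g-e\|_{H^2}$, valid for $F\in H^3$ and $g\in\Gr$ in a bounded neighbourhood of $e$. Applying it with $F=y(t',\fdot)$ and $g=Z^{t,e}_{t'}$, then using Jensen and Cauchy--Schwarz,
\aa{
\bigl\|\E[y(t',\fdot)\circ Z^{t,e}_{t'}-y(t',\fdot)]\bigr\|_{H^2}
&\lt c\,C\,\bigl(\E\|Z^{t,e}_{t'}-e\|^2_{H^2}\bigr)^{1/2} \\
&\lt c\,C\sqrt{C'}\,\sqrt{|t'-t|}\,,
}
which vanishes as $t'\to t$; combining the two estimates proves the continuity of $t\mto Y^{t,e}_t$ into $H^2$. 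The \emph{main obstacle} is exactly this last step: composition by an $H^\al$ diffeomorphism is \emph{not} continuous on $H^\al$ itself, and it is here that the regularity margin $\al\gt3>2$ is used --- the single derivative lost in the Moser estimate is what forces the target to be $H^2$ rather than $H^\al$. The second delicate point, on which the uniformity in $t$ hinges, is securing the $t$-independent a priori bounds on $\|y(t',\fdot)\|_{H^3}$ and on the moments of the forward flow, which is where the smallness of $T$ (through the contraction underlying uniqueness of \eqref{fbsde-xi-t}) enters.
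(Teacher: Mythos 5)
Your route is genuinely different from the paper's, and it has a circularity problem. The paper proves Lemma \ref{lem14} by a direct stability estimate: it extends both solutions $(Z^{t,e},Y^{t,e},X^{t,e})$ and $(Z^{t'\!,e},Y^{t'\!,e},X^{t'\!,e})$ to all of $[0,T]$ as in Lemma \ref{lem1}, applies It\^o's formula to $\|Y^{t,e}_s-Y^{t'\!,e}_s\|^2_{L_2}$ and Gronwall's lemma to get $\sup_s\E\|Y^{t,e}_s-Y^{t'\!,e}_s\|^2_{L_2}<K(t'-t)$ once $T_0<1/K_2$, evaluates at $s=t$, and then repeats the whole argument for the FBSDEs obtained by differentiating once and twice in $\te$ to upgrade $L_2$ to $H^2$. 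Crucially, in the paper the uniqueness of the solution to \eqref{fbsde-xi-t} is proved \emph{after} Lemma \ref{lem14}, and its proof consists of ``the same argument as in the proof of Lemma \ref{lem14}'' applied with $t=t'$; likewise the identity $Y^{t,e}_{t'}=y(t',\fdot)\circ Z^{t,e}_{t'}$ is Lemma \ref{lem111}, whose proof needs that uniqueness. Your first step invokes exactly these two facts. So the contraction/Gronwall estimate that you are trying to avoid is not avoided at all: it is the content of the uniqueness claim you cite, and as written your argument assumes the hard part. To make your proof self-contained you would have to prove small-time uniqueness first, and the natural proof of that is the paper's estimate --- at which point the continuity of $t\mto Y^{t,e}_t$ in $L_2$ already follows and only the differentiated versions remain.

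Even granting uniqueness and the cocycle identity, there is a second gap: the uniform bound $\sup_{t'}\|y(t',\fdot)\|_{H^3}\lt C$ that your Moser-type composition estimate requires. The hypotheses of Theorem \ref{converse} give an $H^\al$-valued solution with a.s.\ continuous trajectories, but no uniform-in-$t$ moment bounds in $H^3$. Your proposed derivation from $y(t',\fdot)=\E[h\circ Z^{t',e}_T+\int_{t'}^T\nab p(r,\fdot)\circ Z^{t',e}_r\,dr]$ needs $t'$-uniform control of $\E\|\nab^3 Z^{t',e}_T\|^2_{L_2}$ (and of lower-order products such as $\E\|\nab Z\|_{L_6}^6$), which in turn requires estimates on the thrice-differentiated coupled FBSDE --- a nontrivial layer of a priori analysis that you assert rather than carry out. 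The paper sidesteps this entirely: it never composes $y(t',\fdot)$ with the flow, but compares the two $Y$-processes directly in $L_2$, then compares $\nab Y$ and $\nab\nab Y$, so only $L_2$-norms of quantities already governed by the (differentiated) FBSDEs ever appear, and no tame composition estimate or $H^3$ a priori bound is needed. Your diagnosis of where the margin $\al\gt 3$ and the smallness of $T$ enter is essentially right, but the proof as proposed does not close.
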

\begin{proof}
Let $(Z^{t,e}_s,Y^{t,e}_s,X^{t,e}_s)$ and $(Z^{t'\!,e}_s,Y^{t'\!,e}_s,X^{t'\!,e}_s)$ be solutions
to \eqref{fbsde-xi-t} which start at the identity $e$ at times $t$ and resp. $t'$, and let $t<t'$.
These solutions can be regarded as solutions of \eqref{extended_fbsde} if we extend them
to the entire interval $[0,T]$ as it was described in Lemma \ref{lem1}.
The application of It\^o's formula to $\|Y^{t,e}_s\|_{L_2(\Tor,\Rnu^2)}^2$ and the backward
SDE of \eqref{fbsde-xi-t} imply that the expectation $\E\|Y^{t,e}_s\|_{L_2(\Tor,\Rnu^2)}^2$ 
is bounded. The forward SDE of \eqref{extended_fbsde}, Gronwall's lemma, and usual 
stochastic integral estimates imply that there exists a constant $K_1>0$ such that
\aa{
\E\|Z^{t,e}_s - Z^{t'\!,e}_s\|^2_{L_2(\Tor,\Rnu^2)} < K_1 \Bigl[
\int_0^s \ind_{[t,T]} \E\|Y^{t,e}_r - Y^{t'\!,e}_r\|^2_{L_2(\Tor,\Rnu^2)} dr + (t'-t)\Bigr].
}
Let us apply It\^o's formula to $\|Y^{t,e}_s - Y^{t'\!,e}_s\|_{L_2(\Tor,\Rnu^2)}^2$
when using the backward SDE of \eqref{extended_fbsde}. Again, 
Gronwall's lemma, usual stochastic integral estimates and the above
estimate for $\E\|Z^{t,e}_s - Z^{t'\!,e}_s\|^2_{L_2(\Tor,\Rnu^2)}$
imply that there exists a constant $K_2 > 0$ such that
\aa{
\E\|Y^{t,e}_s - Y^{t'\!,e}_s\|^2_{L_2(\Tor,\Rnu^2)} < K_2 \Bigl[
\int_0^T \E\|Y^{t,e}_r - Y^{t'\!,e}_r\|^2_{L_2(\Tor,\Rnu^2)} dr + (t'-t)\Bigr].
}
We take $T_0$ smaller than $\frac1{K_2}$. Then
there exists a constant $K>0$ such that
\aaa{
\label{prior-estimate}
\sup_{s\in [0,T]} \E\|Y^{t,e}_s - Y^{t'\!,e}_s\|^2_{L_2(\Tor,\Rnu^2)}
< K(t'-t).
}
Evaluating the right-hand side at the point $s=t$, and taking into
account that $Y^{t'\!,e}_t = Y^{t'\!,e}_{t'}$ 
we obtain that
\aaa{
\label{cont0}
\|Y^{t,e}_t - Y^{t'\!,e}_{t'}\|^2_{L_2(\Tor,\Rnu^2)} < K(t'-t).
}
Differentiating \eqref{extended_fbsde} with respect to $\te$ we 
obtain the following system of forward and backward SDEs:
 \aa{
 \begin{cases}
  \nab Z_s^{t,e} = I + \int_0^s \ind_{[t,T]}(r)\nab Y_r^{t,e} \, dr + \int_0^s \ind_{[t,T]}(r)\nab\sg(Z_r^{t,e})\nab Z_r^{t,e}\, dW_r \\
 \nab Y_s^{t,g} = \nab h(Z_T^{t,e})\nab Z_T^{t,e} + \int_s^T \ind_{[t,T]}(r) \nab \hat V(r,Z_r^{t,g})\nab Z_r^{t,e} \, dr \\
\phantom{\nab Y_s^{t,g} = \nab h(Z_T^{t,e})\nab Z_T^{t,e} } - \int_s^T \nab X_r^{t,e} \,dW_r.
 \end{cases}
 }
Again, standard estimates imply the boundedness 
of $\E\|\nab Z^{t,e}_s\|^2_{L_2(\Tor,\Rnu^2)}$ and $\E\|\nab Y^{t,e}_s\|^2_{L_2(\Tor,\Rnu^2)}$.
The same argument that we used to obtain \eqref{cont0} as well as the estimate
for the $\sup_{s\in [0,T]} \E\|Z^{t,e}_s - Z^{t'\!,e}_s\|^2_{L_2(\Tor,\Rnu^2)}$,
which easily follows from \eqref{prior-estimate}, and the forward SDE
imply that there exists a constant $L>0$ such that for all $t$ and $t'$
from the interval $[0,T]$,
\aaa{
\label{cont1}
\|\nab Y^{t,e}_t - \nab Y^{t'\!,e}_{t'}\|^2_{L_2(\Tor,\Rnu^2)} < L|t'-t|.
}
Differentiating \eqref{extended_fbsde} the second time and using the same argument
once again we obtain that there exist a constant $M>0$ such that
for all $t$ and $t'$ belonging to $[0,T]$, 
\aaa{
\label{cont2}
\|\nab \nab Y^{t,e}_t - \nab \nab Y^{t'\!,e}_{t'}\|^2_{L_2(\Tor,\Rnu^2)} < M|t'-t|.
}
Now \eqref{cont0}, \eqref{cont1}, and \eqref{cont2} imply the continuity 
of the map $t \mto Y^{t,e}_t$ with respect to the $H^2(\Tor,\Rnu^2)$-topology.
\end{proof}
Everywhere below we assume that $T<T_0$ where $T_0$ is the constant defined
in Lemma \ref{lem14}.
\begin{lem}
For every $t\in [0,T)$  and for every $\mc F_t$-measurable
random variable $\xi$, the solution $(Z^{t,\xi}_s, Y^{t,\xi}_s, X^{t,\xi}_s)$ 
to \eqref{fbsde-xi-t} is unique on $[t,T]$.
\end{lem}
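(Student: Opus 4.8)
The plan is to prove uniqueness by a direct contraction argument on the coupled system, exploiting the smallness of $T$ already built into Lemma \ref{lem14}. Let $(Z_s,Y_s,X_s)$ and $(Z_s',Y_s',X_s')$ be two $\mc F_s$-adapted solutions of \eqref{fbsde-xi-t} on $[t,T]$ with the same initial datum $\xi$, both forward processes taking values in $\Gr$. Since both start from $\xi$, the difference of the forward processes vanishes at time $t$, so the initial condition plays no role: the whole mechanism is the forward--backward coupling through the driver $\hat V$ and the terminal map $h$. Throughout I would work in the $L_2(\Tor,\Rnu^2)$-norm and use the volume-preserving property of the elements of $\Gr$ to keep all composition operators $L_2$-isometric, exactly as in the proofs of Lemmas \ref{lem10} and \ref{lem14}.

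First I would record the forward estimate. Subtracting the two forward SDEs of \eqref{fbsde-xi-t}, applying It\^o's formula to $\|Z_s-Z_s'\|^2_{L_2(\Tor,\Rnu^2)}$, and using that $\sg$ is globally Lipschitz on $\Gr$ (it is $\C^\infty$-smooth and bounded by Lemma \ref{Gsv-hilbert}) together with Gronwall's lemma, one obtains a constant $C_1>0$, independent of the particular solutions, with
\aa{
\sup_{s\in[t,T]}\E\|Z_s-Z_s'\|^2_{L_2(\Tor,\Rnu^2)}
\lt C_1\, T \sup_{s\in[t,T]}\E\|Y_s-Y_s'\|^2_{L_2(\Tor,\Rnu^2)}.
}
Next I would treat the backward part. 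Since $Y_T=\hat h(Z_T)$ and $Y_T'=\hat h(Z_T')$, applying It\^o's formula to $\|Y_s-Y_s'\|^2_{L_2(\Tor,\Rnu^2)}$ via the backward SDE and taking expectations (the martingale term vanishes) gives the energy identity
\aa{
\E\|Y_s-Y_s'\|^2_{L_2(\Tor,\Rnu^2)} + \E\int_s^T\|X_r-X_r'\|^2\,dr
= \E\|\hat h(Z_T)-\hat h(Z_T')\|^2_{L_2(\Tor,\Rnu^2)}
+ 2\,\E\int_s^T (Y_r-Y_r', \hat V(r,Z_r)-\hat V(r,Z_r'))_{L_2}\,dr.
}

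I would then bound the right-hand side using the Lipschitz continuity, in the $L_2$-norm, of $g\mapsto \hat h(g)=h\circ g$ and of $g\mapsto \hat V(r,g)=\nab p(r,\fdot)\circ g$ on $\Gr$, followed by Young's inequality; feeding in the forward estimate above to control both $\E\|Z_T-Z_T'\|^2$ and $\E\|Z_r-Z_r'\|^2$ by $T\sup_r\E\|Y_r-Y_r'\|^2$, and then applying Gronwall's lemma once more, yields a constant $C_2>0$ with
\aa{
\sup_{s\in[t,T]}\E\|Y_s-Y_s'\|^2_{L_2(\Tor,\Rnu^2)}
\lt C_2\, T \sup_{s\in[t,T]}\E\|Y_s-Y_s'\|^2_{L_2(\Tor,\Rnu^2)}.
}
Possibly shrinking $T_0$ so that $C_2T<1$ for all $T<T_0$, this self-referential inequality forces $\sup_{s}\E\|Y_s-Y_s'\|^2_{L_2(\Tor,\Rnu^2)}=0$. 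The forward estimate then gives $Z_s=Z_s'$, and the energy identity gives $\E\int_t^T\|X_r-X_r'\|^2\,dr=0$, i.e. $X_s=X_s'$ for a.e. $(s,\om)$; hence the solution of \eqref{fbsde-xi-t} is unique on $[t,T]$.

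The main obstacle is closing the contraction: the forward process is driven by $Y$ while the terminal value and driver of the backward process are driven by $Z$, so the two a priori estimates must be substituted into one another, and the resulting inequality collapses only when $T$ is small — which is precisely why the constant $T_0$ of Lemma \ref{lem14} (shrunk if necessary) reappears here. The technical point to verify with care is the uniform $L_2$-Lipschitz continuity of the composition maps $g\mapsto h\circ g$ and $g\mapsto \nab p(r,\fdot)\circ g$ on $\Gr$; this rests on the $H^{\al+1}$-smoothness of $h$ and $p$ (hence their $\C^1$-boundedness by Sobolev embedding, since $\al\gt 3$) and on the volume-preserving property of the elements of $\Gr$, which makes composition $L_2$-isometric and thereby keeps all the constants $C_1,C_2$ independent of the two solutions being compared.
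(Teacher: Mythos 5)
Your proposal is correct and follows essentially the same route as the paper: the paper's proof simply invokes the Gronwall/contraction argument of Lemma \ref{lem14} applied to the two solutions with $t=t'$, which is exactly the forward--backward energy estimate and small-$T$ contraction you spell out in detail. The only cosmetic difference is that you make explicit the $L_2$-Lipschitz bounds on the composition maps and the substitution of the forward estimate into the backward one, steps the paper leaves implicit by reference to Lemma \ref{lem14}.
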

\begin{proof}
Let us assume that there exists another solution 
$(\td Z^{t,\xi}_s, \td Y^{t,\xi}_s, \td X^{t,\xi}_s)$ 
to \eqref{fbsde-xi-t} on $[t,T]$. The same argument
as in the proof of Lemma \ref{lem14}  implies the uniqueness
of solution to  \eqref{fbsde-xi-t}. Specifically,  the argument that we applied to
the pair of solutions
$(Z^{t,e}_s,Y^{t,e}_s,X^{t,e}_s)$ and $(Z^{t'\!,e}_s,Y^{t'\!,e}_s,X^{t'\!,e}_s)$
has to be applied to
$(Z^{t,\xi}_s, Y^{t,\xi}_s,  X^{t,\xi}_s)$  and $(\td Z^{t,\xi}_s, \td Y^{t,\xi}_s, \td X^{t,\xi}_s)$, 
and it has to be taken into account that $t=t'$.
\end{proof}
\begin{lem}
\label{lem111}
Let the function $y: [0,T]\x \Tor \to \Rnu^2$ be defined by the formula:
\aaa{
\label{function_y}
y(t,\te) = Y^{t,e}_t(\te).
}
Then, for every $t\in [0,T]$, $y(t,\fdot)$ is $H^\al$-smooth, 
 and a.s.
 \aaa{
 \label{111}
 Y_u^{t,e} = y(u,\fdot)\circ Z_u^{t,e}.
 }
\end{lem}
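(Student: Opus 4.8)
The plan is to dispatch the two assertions separately, as they have quite different characters. The $H^\al$-smoothness of $y(t,\fdot)$ is essentially built into the standing hypotheses of this section: the solution triple is assumed to take values in $H^\al(\Tor,\Rnu^2)$, so $Y^{t,e}_t$ is an $H^\al(\Tor,\Rnu^2)$-valued random element, and by Lemma \ref{lem1} it is in fact deterministic. Hence $y(t,\fdot)=Y^{t,e}_t$ is a well-defined element of $H^\al(\Tor,\Rnu^2)$, which is precisely the claimed smoothness. Moreover, since $Z^{t,e}_t=e$ and $Y^{t,e}_s$ takes values in $T_{Z^{t,e}_s}\Gr$, we have $y(t,\fdot)\in T_e\Gr$, so $y(t,\fdot)$ is in addition divergence-free by Lemma \ref{group_lem2}.

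The identity \eqref{111} is a flow (Markov) property, and I would obtain it by a restart-plus-uniqueness argument. Fix $t$ and $u$ with $t\lt u\lt T$. The first step is to observe that the restriction of $(Z^{t,e}_s,Y^{t,e}_s,X^{t,e}_s)$ to $[u,T]$ solves the FBSDEs \eqref{fbsde-xi-t} with initial time $u$ and initial datum $\xi=Z^{t,e}_u$. Indeed, splitting the forward integral gives $Z^{t,e}_s=Z^{t,e}_u+\int_u^s Y^{t,e}_r\,dr+\int_u^s\sg(Z^{t,e}_r)\,dW_r$ for $s\gt u$, while the backward equation on $[u,T]$ is literally the backward equation of \eqref{fbsde-xi-t} with terminal condition at $T$; here $Z^{t,e}_u$ is an $\mc F_u$-measurable, $\Gr$-valued random variable.

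Next I would apply Lemma \ref{lem10} with initial time $u$ and $\xi=Z^{t,e}_u$: the triple $\bigl(Z^{u,e}_s\circ Z^{t,e}_u,\,Y^{u,e}_s\circ Z^{t,e}_u,\,X^{u,e}_s\circ Z^{t,e}_u\bigr)$ is an $\mc F_s$-adapted solution of the very same FBSDEs \eqref{fbsde-xi-t} on $[u,T]$ started at $\xi=Z^{t,e}_u$. By the uniqueness lemma established just above (valid for $\mc F_u$-measurable initial data), the two solutions coincide on $[u,T]$; evaluating the $Y$-components at $s=u$ and using $Y^{u,e}_u=y(u,\fdot)$ from the definition \eqref{function_y}, together with $Z^{u,e}_u=e$, yields $Y^{t,e}_u=Y^{u,e}_u\circ Z^{t,e}_u=y(u,\fdot)\circ Z^{t,e}_u$, which is exactly \eqref{111}.

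The main obstacle is the careful bookkeeping needed to make the restart step legitimate, namely the verification that the restricted triple is genuinely an $\mc F_s$-adapted solution of the FBSDE started at the random time-$u$ datum $Z^{t,e}_u$, so that the uniqueness lemma applies verbatim. This rests on the adaptedness and a.s.\ $\Gr$-valuedness of $Z^{t,e}_u$ (both available from the standing hypotheses and the a.s.\ continuity of trajectories) and on the independence of the increments of $W$, which makes the backward equation on $[u,T]$ well posed relative to the subfiltration. Once this is in place, the equality holds for each fixed $u$ almost surely; finally, the continuity of both sides in $u$ (Lemma \ref{lem14} for $u\mto y(u,\fdot)$ together with the continuity of $s\mto Z^{t,e}_s$ and of $s\mto Y^{t,e}_s$) upgrades it to an identity holding a.s.\ simultaneously for all $u\in[t,T]$.
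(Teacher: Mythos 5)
Your proposal is correct and follows essentially the same route as the paper's proof: the identity \eqref{111} is obtained by restarting the FBSDE at time $u$ from the datum $Z^{t,e}_u$, identifying the restarted solution with the right-translate $Y^{u,e}_s\circ Z^{t,e}_u$ via Lemma \ref{lem10}, invoking the uniqueness lemma, and then upgrading from fixed $u$ to all $u$ by the a.s.\ continuity of trajectories together with Lemma \ref{lem14}. The paper packages the translation step as the relation $Y^{t,\xi}_t = y(t,\fdot)\circ\xi$ before restarting, but this is only a cosmetic reordering of the same argument.
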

\begin{proof}
Note that \eqref{form_of_solution} implies that if $\xi$ is $\mc F_t$-measurable
then
\aaa{
\label{NS-rinv}
Y^{t,\xi}_t = y(t,\fdot)\circ \xi.
}
Further, for each fixed $u\in [t,T]$,  $(Z_s^{t,e}, Y_s^{t,e}, X_s^{t,e})$
 is a solution of the following problem on $[u,T]$:
 \[
 \begin{cases}
  Z_s^{t,e} = Z_u^{t,e} + \int_u^s Y^{t,e}_r dr + \int_u^s \sg(Z^{t,e}_r)dW_r \\
  Y_s^{t,e} = h(Z_T^{t,e}) +  \int_s^T \hat V(r,Z_r^{t,e}) dr - \int_s^T X^{t,e}_r dW_r.
 \end{cases}
 \]
  By uniqueness of solution, it holds that 
 $Y_s^{t,e} = Y_s^{u,Z_u^{t,e}}$ a.s. on $[u,T]$. Next, by \eqref{NS-rinv},
 we obtain that $Y_u^{u,Z_u^{t,e}}= y(u,\fdot) \circ Z_u^{t,e}$.
 This implies that there exists a set
 $\Om_u$ (which depends on $u$) of full $\PP$-measure
 such that \eqref{111} holds everywhere on $\Om_u$.
 Clearly, one can find a set $\Om_\Qnu$, $\PP(\Om_\Qnu)=1$, such that
 \eqref{111} holds on $\Om_\Qnu$ for all rational $u\in [t,T]$.
 But the trajectories of $Z^{t,e}_s$ and $Y^{t,e}_s$
 are a.s. continuous. 
Furthermore, Lemma \ref{lem14} implies the continuity 
of $y(t,\fdot)$ in $t$ with respect to (at least) the 
$L_2(\Tor,\Rnu^2)$-topology. 
Therefore, \eqref{111} holds a.s. with respect to the $L_2(\Tor,\Rnu^2)$-topology. Since
 both sides of \eqref{111} are continuous in $\te\in\Tor$ it also holds a.s. for all $\te\in\Tor$.
\end{proof}
\begin{lem}
\label{y-smooth}
The function $y$ defined by formula \eqref{function_y}
is $C^1$-smooth in $t\in [0,T]$.
\end{lem}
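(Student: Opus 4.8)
The plan is to identify the backward Navier--Stokes equation as the evolution law of $y$ in its time variable, and then to read off from that identity both the existence and the continuity of $\pl_t y$. The one point requiring care is a potential circularity: one would like to apply It\^o's formula to $y(s,\fdot)\circ Z^{t,e}_s$ in the variable $s$, but the time-regularity of $y$ is precisely what is in question. I would circumvent this by \emph{freezing} the time slot of $y$ and applying It\^o only to the spatial composition map $g\mto y(t+h,\fdot)\circ g$, which needs nothing beyond the $H^\al$-smoothness of $y(t+h,\fdot)$ already granted by Lemma \ref{lem111} (and $\al>3$ gives at least $\C^2$ in $\te$).

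First I would start from the backward SDE \eqref{bwd_sde} restricted to a short interval $[t,t+h]$, namely $Y^{t,e}_t = Y^{t,e}_{t+h} + \int_t^{t+h}\hat V(r,Z^{t,e}_r)\,dr - \int_t^{t+h} X^{t,e}_r\,dW_r$, and take expectations. The martingale term vanishes, $Y^{t,e}_t = y(t,\fdot)$ is deterministic by Lemma \ref{lem1}, and $Y^{t,e}_{t+h} = y(t+h,\fdot)\circ Z^{t,e}_{t+h}$ by Lemma \ref{lem111}. This produces the deterministic identity
\[
y(t,\fdot) = \E\bigl[\,y(t+h,\fdot)\circ Z^{t,e}_{t+h}\,\bigr] + \E\int_t^{t+h}\hat V(r,Z^{t,e}_r)\,dr.
\]

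Next I would expand the first expectation by It\^o's formula applied to the fixed spatial map $g\mto y(t+h,\fdot)\circ g$ along the forward SDE \eqref{forward_sde} on $[t,t+h]$. Since the drift of $Z^{t,e}_r$ is $Y^{t,e}_r = y(r,\fdot)\circ Z^{t,e}_r$, the computation \eqref{diff_along_vfield} identifies the first-order term as $[(y(r,\fdot),\nab)y(t+h,\fdot)]\circ Z^{t,e}_r$, while Corollary \ref{cor_laplacian} evaluates the second-order term as $\nu\,[\lap y(t+h,\fdot)]\circ Z^{t,e}_r$. Taking the expectation (which again kills the martingale) and rearranging yields
\[
\frac{y(t+h,\fdot)-y(t,\fdot)}{h} = -\frac1h\,\E\int_t^{t+h}\Bigl(\bigl[(y(r,\fdot),\nab)y(t+h,\fdot) + \nu\lap y(t+h,\fdot)\bigr]\circ Z^{t,e}_r + \hat V(r,Z^{t,e}_r)\Bigr)\,dr.
\]
Then I would let $h\to 0$. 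As $r\to t$ one has $Z^{t,e}_r\to e$, the field $\hat V(r,Z^{t,e}_r)=\nab p(r,\fdot)\circ Z^{t,e}_r\to\nab p(t,\fdot)$, and by the $H^2$-continuity of $t\mto y(t,\fdot)$ from Lemma \ref{lem14} (applying the bounded operators $\nab$ and $\lap$, and using that $\al>3$) the nonlinear and viscous terms converge in $L_2(\Tor,\Rnu^2)$. The right-hand side therefore tends to $-\bigl[(y(t,\fdot),\nab)y(t,\fdot)+\nu\lap y(t,\fdot)+\nab p(t,\fdot)\bigr]$, which is exactly the differentiated form of \eqref{ns_consider}. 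Running the same argument for $h<0$ matches the left derivative, so $\pl_t y(t,\fdot)$ exists in $L_2(\Tor,\Rnu^2)$, equals this expression, and is continuous in $t$; hence $t\mto y(t,\fdot)$ is $\C^1$.

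The step I expect to be the main obstacle is the rigorous passage to the limit in the averaged integral $\frac1h\E\int_t^{t+h}(\cdots)\,dr$. This requires a dominated-convergence or uniform-integrability argument for the composed fields $[\lap y(t+h,\fdot)]\circ Z^{t,e}_r$ and $[(y(r,\fdot),\nab)y(t+h,\fdot)]\circ Z^{t,e}_r$, controlled by uniform-in-$h$ $H^2$-bounds on $y$ coming from Lemma \ref{lem14} together with the volume-preserving property of $Z^{t,e}_r$, which keeps the $L_2$-norms stable under composition. A secondary point demanding attention is verifying that the two one-sided derivatives genuinely coincide and that the identified derivative is continuous rather than merely existing pointwise in $t$.
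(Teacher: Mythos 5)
Your proposal is correct and follows essentially the same route as the paper's proof: freeze the time slot of $y$, apply It\^o's formula to the spatial composition $g\mto y(t+\dl,\fdot)\circ g$ along the forward SDE, combine with the backward SDE, take expectations, and pass to the limit in the resulting difference quotient \eqref{diff} using the $H^2$-continuity from Lemma \ref{lem14}. The only point of divergence is the left derivative: rerunning the argument verbatim for $h<0$ does not quite work since $Z^{t,e}_s$ is only defined for $s\gt t$, and the paper instead deduces the left derivative from the (locally uniform) continuity of the right derivative, which your continuity step already supplies.
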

\begin{proof}
Let $\dl > 0$. We obtain:
\aa{
y(t+\dl,\fdot) - y(t,\fdot) = Y^{t+\dl,e}_{t+\dl} - Y^{t,e}_t = Y^{t+\dl,e}_{t+\dl} -Y^{t,e}_{t+\dl} +  
Y^{t,e}_{t+\dl} - Y^{t,e}_t.
}
Let $\hat Y_s$ be the right-invariant vector field
on $G^\alpha$ generated by $y(s,\fdot)$. 
Lemma \ref{lem111} implies that a.s.
\aa{
Y^{t,e}_{t+\dl} = \hat Y_{t+\dl}(Z^{t,e}_{t+\dl}).
}
Thus we obtain that a.s.
\aa{
y(t+\dl,\fdot) - y(t,\fdot) = \bigl(\hat Y_{t+\dl}(e) - \hat Y_{t+\dl}(Z^{t,e}_{t+\dl})\bigr)
+(Y^{t,e}_{t+\dl} - Y^{t,e}_t).
}
We use the backward SDE for the second difference and
apply It\^o's formula to the first difference when considering
$\hat Y_{t+\dl}$ as a $C^2$-smooth function $\Gr \to L_2(\Tor,\Rnu^2)$. 
We obtain:
\aa{
& \hat Y_{t+\dl}(Z^{t,e}_{t+\dl}) - \hat Y_{t+\dl}(e) \\
& = \int_t^{t+\dl}\hspace{-2mm} dr \, \hat Y^{t,e}_r(Z^{t,e}_r) [\hat Y_{t+\dl}(Z^{t,e}_r)] 
+ \int_t^{t+\dl} \hspace{-2mm}\epsilon \, \sg(Z^{t,e}_r) \,\hat Y_{t+\dl}(Z^{t,e}_r) \, dW_r\\
&+ \int_t^{t+\dl} dr \sum_{k\in \Znu_2^+\cup \{0\}}  [A_k(Z^{t,e}_r)A_k(Z^{t,e}_r)+B_k(Z^{t,e}_r)B_k(Z^{t,e}_r)]\, 
\hat Y_{t+\dl}(Z^{t,e}_r).
}
The same argument as in Theorem \ref{thm2} implies:
\mm{
\hat Y_{t+\dl}(Z^{t,e}_{t+\dl}) - \hat Y_{t+\dl}(e) =  \int_t^{t+\dl}\hspace{-2mm} dr \, \nab_{y(r,\fdot)}\,y(t+\dl,\fdot)\circ Z^{t,e}_r \\
+\int_t^{t+\dl} \hspace{-2mm} dr \, \nu\,\lap\, y(t+\dl,\fdot)\circ Z^{t,e}_r 
+ \int_t^{t+\dl} \hspace{-2mm}\epsilon \, \sg(Z^{t,e}_r) \,\hat Y_{t+\dl}(Z^{t,e}_r) \, dW_r.
}
Further we have:
\aa{
Y^{t,e}_{t}- Y^{t,e}_{t+\dl} = \int_t^{t+\dl}\hspace{-2mm} dr \, \nab p(r,\fdot) \circ
Z^{t,e}_r - \int_t^{t+\dl} \hspace{-2mm} X^{t,e}_r \, dW_r.
}
Finally we obtain that
\mmm{
\label{diff}
\frac1{\dl}\, \bigl(y(t+\dl,\fdot) - y(t,\fdot)\bigr) = - \frac1{\dl}\, \E \Bigl[
\int_t^{t+\dl}\hspace{-2mm} dr \, [ \, (y(r,\fdot),\nab)\,y(t+\dl,\fdot)\\
+\nu\,\lap\, y(t+\dl,\fdot)+\nab p(r,\fdot)] \circ Z^{t,e}_r
\Bigr].
}
Note that $Z^{t,e}_r$, $\nab p(r,\fdot)$, and 
$(y(r,\fdot),\nab)\,y(t+\dl,\fdot)\circ Z^{t,e}_r$
are continuous in $r$ a.s. with respect to the $L_2(\Tor,\Rnu^2)$-topology.
By Lemma \ref{lem14},
$\nab\, y(t,\fdot)$ and $\lap\, y(t,\fdot)$ are continuous in $t$ with
respect to the $L_2(\Tor,\Rnu^2)$-topology. 
Formula \eqref{diff} and the fact that $Z^{t,e}_t = e$ imply that in the $L_2(\Tor,\Rnu^2)$-topology
\aaa{
\label{ns3}
\pl_t y(t,\fdot) = -[\nab_{y(t,\fdot)}\,y(t,\fdot)
+\nu\,\lap\, y(t,\fdot) + \nab p(t,\fdot)]. 
}
Since the right-hand side of \eqref{ns3} is 
an $H^{\al-2}$-map, so is the left-hand side.
This implies that $\pl_t y(t,\fdot)$ is continuous in $\te\in\Tor$.
Relation \eqref{ns3} is obtained so far for the right derivative of $y(t,\te)$
with respect to $t$.
Note that the right-hand side of \eqref{ns3} is continuous in $t$ which implies that
the right derivative $\pl_t y(t,\te)$ is continuous in $t$ on
$[0,T)$. Hence, it is uniformly continuous on every compact subinterval of $[0,T)$.
This implies the existence of the left derivative of $y(t,\te)$ in $t$, and therefore,
the existence of the continuous derivative $\pl_t y(t,\te)$ everywhere on $[0,T]$.
\end{proof}
\begin{lem}
\label{lem_last}
For every $t\in[0,T]$, the function $y(t,\fdot): \Tor\to\Rnu^2$
is divergence-free. Moreover, the pair $(y,p)$ verifies
the backward Navier--Stokes equations.
\end{lem}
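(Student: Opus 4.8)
The plan is to split the statement into two parts: first establish that each $y(t,\fdot)$ is divergence-free, and then verify the integral identity \eqref{ns_consider}. The second part is almost already done, because Lemma \ref{y-smooth} furnishes, in the $L_2(\Tor,\Rnu^2)$-topology, the pointwise-in-time relation \eqref{ns3},
\[
\pl_t y(t,\fdot) = -\bigl[(y(t,\fdot),\nab)\,y(t,\fdot) + \nu\,\lap\,y(t,\fdot) + \nab p(t,\fdot)\bigr],
\]
which is exactly the differentiated form of the backward Navier--Stokes system. So once divergence-freeness and the correct final datum are in hand, integrating this relation over $[s,T]$ will reconstruct \eqref{ns_consider}.

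For the divergence-free property I would argue geometrically. By hypothesis the forward process $Z^{t,e}_s$ takes values in $\Gr$, a smooth submanifold of $G^\s$ (Lemma \ref{group_lem1}), and its diffusion coefficients $A_k(Z^{t,e}_s)$, $B_k(Z^{t,e}_s)$ are tangent to $\Gr$, since by Lemma \ref{Gsv-hilbert} they form a basis of $T_{Z^{t,e}_s}\Gr$. Crucially, Lemma \ref{lem5} shows that for this $\sg$ the It\^o and Stratonovich forms of the forward SDE coincide, so $Y^{t,e}_s$ is simultaneously the It\^o and the Stratonovich drift. Since Stratonovich calculus obeys the ordinary chain rule, a $\Gr$-valued semimartingale whose martingale part is tangent must have tangent drift; hence $Y^{t,e}_s \in T_{Z^{t,e}_s}\Gr$ almost surely. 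By Lemma \ref{lem111}, $Y^{t,e}_s = y(s,\fdot)\circ Z^{t,e}_s$, and since $T_{Z^{t,e}_s}\Gr = \{V\circ Z^{t,e}_s : \div V = 0\}$ while $Z^{t,e}_s$ is a diffeomorphism, this forces $\div y(s,\fdot)=0$. Evaluating at $s=t$, where $Z^{t,e}_t=e$ and $\Alg$ is precisely the space of divergence-free $H^\s$-fields (Lemma \ref{group_lem2}), gives that $y(t,\fdot)$ is divergence-free for every $t\in[0,T)$; the case $t=T$ follows from the final datum $y(T,\fdot)=h$ established below, together with the standing assumption that $h$ is divergence-free.

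It remains to pin down the final condition and assemble the integral equation. From the terminal constraint $Y^{t,e}_T = h\circ Z^{t,e}_T$ together with the representation $Y^{t,e}_T = y(T,\fdot)\circ Z^{t,e}_T$ from Lemma \ref{lem111}, and precomposing with $(Z^{t,e}_T)^{-1}$, I obtain $y(T,\fdot)=h$. Integrating \eqref{ns3} over $[s,T]$ and substituting $y(T,\fdot)=h$ then yields
\[
y(s,\fdot) = h + \int_s^T \bigl[\nab p(r,\fdot) + (y(r,\fdot),\nab)\,y(r,\fdot) + \nu\,\lap\,y(r,\fdot)\bigr]\,dr,
\]
which, with $\div y=0$, is \eqref{ns_consider}.

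The main obstacle is the divergence-free step, namely the rigorous passage ``the solution stays on $\Gr$ with correction-free tangent diffusion, hence its drift is tangent'' in the infinite-dimensional Hilbert-manifold setting, since this rests on a Stratonovich chain rule applied to a local defining map of $\Gr$ inside $G^\s$. If a purely intrinsic argument is judged too fragile, I would replace it by a concrete computation on the Jacobian: since $Z^{t,e}_s\in\Gr$ is volume-preserving one has $\det DZ^{t,e}_s \equiv 1$, and applying It\^o's formula to this determinant isolates the single drift constraint $[\div y(s,\fdot)]\circ Z^{t,e}_s = 0$, i.e. $\div y(s,\fdot)=0$. Here the divergence-free fields $\bar A_k,\bar B_k$ contribute nothing to either the drift or the It\^o correction, by the same identities $(\bar k,\nab)\cos(k\cdot\te)=(\bar k,\nab)\sin(k\cdot\te)=0$ already exploited in Lemmas \ref{lem5} and \ref{lapl}, so that only the $Y^{t,e}_s$-drift survives.
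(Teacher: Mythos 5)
Your proposal is correct and its overall decomposition (divergence-freeness of $y(t,\fdot)$, then the terminal datum $y(T,\fdot)=h$, then integration of \eqref{ns3}) coincides with the paper's; the last two steps are carried out exactly as in the paper, via \eqref{111} and the terminal constraint of \eqref{fbsde-xi-t}. Where you genuinely diverge is the divergence-free step. The paper argues by forming the deterministic curve $\gm_\zeta = \E\,[\exp^{-1}Z^{t,e}_\zeta]$, which takes values in the \emph{linear} space $T_e\Gr$ because $Z^{t,e}_\zeta\in\Gr$; taking the expectation annihilates the martingale part, so $\frac{\pl}{\pl\zeta}\gm_\zeta|_{\zeta=t}=y(t,\fdot)$ is automatically an element of $T_e\Gr$, i.e.\ divergence-free, with no chain rule on a submanifold and no determinant computation needed. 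You instead argue that the drift of a $\Gr$-valued semimartingale with tangent, correction-free noise must itself be tangent, either via the Stratonovich chain rule (using Lemma \ref{lem5}) or via It\^o's formula applied to $\det DZ^{t,e}_s\equiv 1$. Both of your variants are sound in substance and rest on the same identities $(\bar k,\nab)\cos(k\cdot\te)=(\bar k,\nab)\sin(k\cdot\te)=0$ that the paper exploits elsewhere; you correctly flag that the intrinsic Stratonovich version needs a local defining map for $\Gr$ inside $G^\s$ in infinite dimensions, which is precisely the technicality the paper's expectation trick sidesteps by linearizing through $\exp^{-1}$ and averaging. The trade-off is that your Jacobian computation is more concrete and self-contained, while the paper's argument is shorter but leaves the differentiation of $\zeta\mto\E\,[\exp^{-1}Z^{t,e}_\zeta]$ under the expectation implicit.
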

\begin{proof}
Fix a $t>0$, and 
consider the $T_e\Gr$-valued curve $\gm_\zeta = \E\,[{\exp}^{-1}Z^{t,e}_\zeta]$, $\zeta  \gt t$, 
in a neigborhood of the origin of $T_e\Gr$.
The forward SDE of \eqref{fbsde-xi-t} 
can be represented as an SDE on $G^\al$:
\aa{
\begin{cases}
dZ^{t,e}_s = \exp\{\hat Y_s(Z^{t,e}_s)\,ds + \sg(Z^{t,e}_s)\,dW_s\}, \\
Z^{t,e}_t = e,
\end{cases}
}
where $\hat Y_s$ is the right-invariant vector field on $G^\al$ generated
by $y(s,\fdot)$. 
This implies that 
\aa{
\frac{\pl}{\pl\zeta} \gm_\zeta \Bigl |_{\zeta=t}= y(t,\fdot),
}
and therefore $y(t,\fdot)\in T_e\Gr$.
Next, the backward SDE of  \eqref{fbsde-xi-t} implies that $Y^{t,e}_T = h(Z^{t,e}_T)$.
This and relation \eqref{111} imply that $y(T,\fdot) = h$.
Since we already obtained \eqref{ns3} in Lemma
\ref{y-smooth} the proof of the lemma is now complete.
\end{proof}
 \section{The backward SDE as an SDE on a tangent bundle}

 Let $(Z^{t,e}_s, Y^{t,e}_s, X^{t,e}_s)$
 be a solution to FBSDEs \eqref{fbsde-torus}.
 We will show that the backward SDE  can be represented as an SDE on
 the tangent bundle $T\Gr$ as well as an SDE on $TG^\s$.
We will construct a backward SDE in the Dalecky--Belopolskaya form (see \cite{Belopolskaya})
 and show that the process $Y^{t,e}_s$ is its unique solution.

 \subsection{The representation of the backward SDE on $T\Gr$}
  \label{repres_TG}
  Let $y(s,\fdot)$, $s\in [t,T]$, be the solution to the backward Navier--Stokes
  equations \eqref{ns_consider}. 
  Let $\hat Y_s$ be the right-invariant vector field on $\Gr$ generated by
  $y(s,\fdot)$. The connection map on the manifold $\Gr$ generates
  the connection map on the manifold $T\Gr$ as it was shown in \cite{Belopolskaya}, p. 58
  (see also \cite{eliasson}).
  As before, we consider the Levi-Civita connection of the weak Riemannian
  metric \eqref{weak} on $\Gr$.
  Let $\ovl{\exp}$ denote the exponential map of the generated
  connection on $T\Gr$. More precisely, $\ovl{\exp}$ is given
  as follows:
  \aa{
  \ovl{\exp}_{\left( \substack{ x\\ a}\right)} \left( \begin{matrix} \al \\ \beta
  \end{matrix} \right) =  \left( \begin{matrix} \gm_\al(1) \\ \eta_\beta(1)
  \end{matrix} \right)
  }
  where $\left( \begin{matrix} \gm_\al(t) \\ \eta_\beta(t) \end{matrix} \right)$
  is the geodesic curve on $T\Gr$ with the initial data
  $\gm'_\al(0) = \al$, $\eta_\beta'(0) = \beta$, $\gm_\al(0) = x$, $\eta_\beta(0) = a$.
  Let the vector fields
  $A_k^{\sc H}$ and $B_k^{\sc H}$ be the horizontal
  lifts of $A_k$ and $B_k$ onto $TT\Gr$.
  Further let
  $\pl_s \hat Y_s^\ell$ be the vertical lift
  of $\pl_s \hat Y_s$ onto $TT\Gr$.
 Let us consider the backward SDE on $T\Gr$:
 \aaa{
 \label{eqBSDE}
 \begin{split}
  dY^{t,e}_s &= \ovl{\exp}_{Y^{t,e}_s}
  \Bigl \{\pl_s \hat Y_s^\ell(Y^{t,e}_s) ds + {\mathrm S}(Y^{t,e}_s)ds \\
    & +  \epsilon \, \hspace{-2mm} \sum_{k\in \Znu_2^+\cup\{0\}, |k| \lt N}
    \bigl[A_k^{\sc H}(Y^{t,e}_s)\ox e^A_k + B_k^{\sc H}(Y^{t,e}_s)\ox e^A_k\bigr]\, dW_s \Bigr\}, \\
  Y^{t,e}_T & =  \hat h(Z^{t,e}_T)
  \end{split}
  }
  where $\mathrm S$ is the geodesic spray of the Levi-Civita connection
  of the weak Riemannian metric \eqref{weak} on $\Gr$ (see \cite{Gliklikh1} or \cite{Gliklikh2}),
  and $Z^{t,e}_s$, $s\in [t,T]$, is the solution to \eqref{forward_sde} on $\Gr$
  with the initial condition $Z^{t,e}_t = e$. 
  \begin{thm}
  \label{thm8}
  There exists a solution to \eqref{eqBSDE} on $[t,T]$.
  Moreover, if $\pl_s y(s,\fdot)\in H^\s(\Tor,\Rnu^2)$, then
  this solution is unique and
  coincides with the $Y^{t,e}_s$-part of the unique $\mc F_s$-adapted
  solution $(Y^{t,e}_s, X^{t,e}_s)$ to \eqref{bwd_sde}.
  \end{thm}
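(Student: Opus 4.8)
The plan is to take the process $Y^{t,e}_s=\hat Y_s(Z^{t,e}_s)$ — the same process shown in Theorem \ref{thm2} to solve the backward SDE \eqref{bwd_sde} — as the candidate solution of the tangent-bundle equation \eqref{eqBSDE}, to verify it by an It\^o computation on $T\Gr$, and then to obtain uniqueness by rewriting \eqref{eqBSDE} in the ambient space $H^\s(\Tor,\Rnu^2)$ and invoking the uniqueness already available for \eqref{bwd_sde}.

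For existence I would apply It\^o's formula for the exponential map $\ovl{\exp}$ to the $T\Gr$-valued process $\hat Y_s(Z^{t,e}_s)$, regarded as the image of the forward diffusion $Z^{t,e}_s$ under the time-dependent section $\hat Y_\bullet:[t,T]\x\Gr\to T\Gr$. The tangent map of a section decomposes through the induced connection as $T\hat Y_s(v)=v^{\sc H}+(\bar\nabla_v\hat Y_s)^\ell$, so that the horizontal lifts $A_k^{\sc H},B_k^{\sc H}$ together with the spray $\mathrm S$ account for the motion of the base point and for the geodesic (Christoffel) correction, while $\pl_s\hat Y_s^\ell$ supplies the explicit time dependence of the field. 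The content of the verification is that, once \eqref{eqBSDE} is read off in $H^\s(\Tor,\Rnu^2)$ via the embedding of $T\Gr$, the second-order It\^o correction built into $\ovl{\exp}$ together with the vertical contribution produced by the horizontal lifts reassembles exactly the drift $\pl_s\hat Y_s+\bar\nabla_{\hat Y_s}\hat Y_s+\frac{\epsilon^2}{2}\sum_k(\bar\nabla_{A_k}\bar\nabla_{A_k}+\bar\nabla_{B_k}\bar\nabla_{B_k})\hat Y_s$ and the diffusion coefficient $\epsilon\,\sg(Z^{t,e}_s)\hat Y_s(Z^{t,e}_s)$ already computed in \eqref{Y-BSDE-Ito1} and \eqref{Y-BSDE-Ito}. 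Since $y(T,\fdot)=h$ forces $\hat Y_T(Z^{t,e}_T)=\hat h(Z^{t,e}_T)$, the terminal value also matches, and $\hat Y_s(Z^{t,e}_s)$ is a solution.

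For uniqueness under the hypothesis $\pl_s y(s,\fdot)\in H^\s(\Tor,\Rnu^2)$, I would pass from the intrinsic formulation on $T\Gr$ to the ambient Hilbert space. Projecting \eqref{eqBSDE} by $\pi_*$ and using the defining properties $\pi_*\mathrm S(v)=v$, $\pi_* A_k^{\sc H}=A_k$, and $\pi_*\pl_s\hat Y_s^\ell=0$, the base component of any solution satisfies $d\pi(Y_s)=Y_s\,ds+\epsilon\,\sg(\pi(Y_s))\,dW_s$; combined with the vertical part of \eqref{eqBSDE} and the terminal condition $\hat h(Z^{t,e}_T)$, this is precisely the $H^\s(\Tor,\Rnu^2)$-valued backward SDE \eqref{bwd_sde} for the pair $(Y_s,X_s)$ with the martingale integrand now geometrically fixed to $X_s=\epsilon\,\sg(\pi(Y_s))\hat Y_s$. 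Because $\pl_s\hat Y_s$ is, by hypothesis, an $H^\s$-section, the drift of this ambient equation stays in $H^\s(\Tor,\Rnu^2)$, so Lemma \ref{lem11} applies and yields a unique $\mc F_s$-adapted solution; by Theorem \ref{thm2} that solution is $Y^{t,e}_s=\hat Y_s(Z^{t,e}_s)$, which in particular identifies the base with $Z^{t,e}_s$ and pins down the solution of \eqref{eqBSDE}.

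The main obstacle is the second-order stochastic differential geometry on the infinite-dimensional tangent bundle $T\Gr$: one must justify It\^o's formula for $\ovl{\exp}$ of the connection induced on $T\Gr$ and show that the spray together with the horizontal lifts — which individually carry no first-order vertical component, since $K\circ\mathrm S=0$ and $K\circ A_k^{\sc H}=0$ for the connection map $K$ — nonetheless reproduce, through the nonlinearity of $\ovl{\exp}$ and the embedding $T\Gr\hookrightarrow H^\s(\Tor,\Rnu^2)$, the full covariant drift and the diffusion coefficient $\epsilon\,\sg(Z^{t,e}_s)\hat Y_s(Z^{t,e}_s)$ of \eqref{bwd_sde}. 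The regularity assumption $\pl_s y\in H^\s$ enters exactly at the uniqueness step: it places the vertical drift $\pl_s\hat Y_s^\ell$ in the $H^\s$-tangent bundle, so that the ambient equation has $H^\s$-valued coefficients and falls within the scope of Lemma \ref{lem11}.
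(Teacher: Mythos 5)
Your existence argument is essentially the paper's: both verify that $\hat Y_s(Z^{t,e}_s)$ solves \eqref{eqBSDE} by an It\^o computation on $T\Gr$ that reduces the tangent-bundle equation to the identity already established in \eqref{Y-BSDE-Ito1}--\eqref{Y-BSDE-Ito}; the paper runs this through test functions and normal coordinates rather than through the intrinsic decomposition $T\hat Y_s(v)=v^{H}+(\bar\nabla_v\hat Y_s)^\ell$, but the content is the same.

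The uniqueness step is where you diverge, and where there is a genuine gap. The paper never projects \eqref{eqBSDE} into $H^\s(\Tor,\Rnu^2)$: it observes that \eqref{eqBSDE} is an equation on the Hilbert manifold $T\Gr$ with \emph{prescribed} diffusion coefficient and smooth, bounded coefficients (the spray and the horizontal lifts of $A_k$, $B_k$ are $\C^\infty$, and the hypothesis $\pl_s y(s,\fdot)\in H^\s(\Tor,\Rnu^2)$ makes the vertical lift $\pl_s\hat Y_s^\ell$ a legitimate $\C^1$ vector field on $T\Gr$), and then invokes the pathwise uniqueness theory of Gliklikh and Dalecky--Belopolskaya for such manifold-valued equations. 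Your proposed reduction to Lemma \ref{lem11} does not go through as stated, for two reasons. First, Lemma \ref{lem11} treats BSDEs whose drift $\Phi_r$ is an exogenous process, whereas after projection the drift of \eqref{eqBSDE} depends on the unknown solution itself: quadratically through the spray $\mathrm{S}(Y_s)$, and through the base point $\pi(Y_s)$ at which $\pl_s\hat Y_s^\ell$ and the Christoffel corrections are evaluated. Second, the identification $\pi(Y_s)=Z^{t,e}_s$, which you need in order to recognize the projected drift as $-\hat V(s,Z^{t,e}_s)$, is itself part of what must be proved for an arbitrary solution of \eqref{eqBSDE}: the base component only satisfies $d\pi(Y_s)=Y_s\,ds+\epsilon\,\sg(\pi(Y_s))\,dW_s$ anchored by the single condition $\pi(Y_T)=Z^{t,e}_T$, and a forward equation with endogenous drift and a terminal anchor does not by itself single out $Z^{t,e}_s$. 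To repair the argument you should either adopt the paper's route (pathwise uniqueness for the SDE on $T\Gr$ with smooth coefficients) or supply a separate Gronwall-type contraction for the coupled base--fiber system before any appeal to Lemma \ref{lem11}.
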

  \begin{proof}
  From the proof of Theorem \ref{thm2} we know that the pair of stochastic processes
  $(\hat Y_s(Z^{t,e}_s), \epsilon \, \sg(Z^{t,e}_s) \hat Y_s(Z^{t,e}_s))$ is the unique
  $\mc F_s$-adapted solution to \eqref{bwd_sde} in $H^\s(\Tor,\Rnu^2)$.
  Let us prove that $\hat Y_s(Z^{t,e}_s)$ is a strong solution
  to \eqref{eqBSDE}.
  First we describe a system of local coordinates
  $(g^{kA}, X^{kA}, g^{kB}, X^{kB})_{k\in \Znu_2^+ \cup \{0\}}$
  in a neighborhood $U_eg \x T_e\Gr$
  of the point $\hat X(g)\in T\Gr$ where $U_e\sub \Gr$ is
  the canonical chart.
  The vector $\bar g = (g^{kA}, g^{kB})_{k\in \Znu_2^+ \cup \{0\}}$ 
  is the vector of normal coordinates
  in the neighborhood  $U_eg$, $g\in \Gr$.
  The vector $\bar X = (X^{kA}, X^{kB})_{k\in \Znu_2^+ \cup \{0\}}$
  represents the coordinates of the decomposition of the vector
  $\hat X(g) \in T\Gr$ in the basis $\{A_k,B_k\}_{k\in \Znu_2^+ \cup \{0\}}$:
  $\hat X(g) = \sum_{k\in \Znu_2^+ \cup \{0\}}(X^{kA}A_k(g) + X^{kB}B_k(g))$.
    Let $f$ be a smooth function on $T\Gr$, and let
  $\td f(\bar X, \bar g) = f(\hat X(g))$,
  where $\hat X(g) \in T\Gr$.
  %
%
  Let $\tau$ be the exit time of the process $Z^{t,e}_{r}$
  from the neighborhood $U_eZ^{t,e}_s$.
  We will compute the difference $f(Y^{t,e}_s) - f(Y^{t,e}_\tau)$
  using It\^{o}'s formula.  Let
  $(\bar Z_r, \bar Y_r) = (Z^{kA}_r, Z^{kB}_r, Y^{kA}_r, Y^{kB}_r)_{k\in \Znu_2^+ \cup \{0\}}$
  be the vector of local coordinates of the process $\hat Y_r(Z^{t,e}_{r})$ on $[s,\tau]$.
  Using SDE \eqref{eqBSDE}, we obtain:
  \mmm{
  \label{ito5}
  f(Y^{t,e}_s) - f(Y^{t,e}_\tau) = - \sum_{k\in\Znu^+_2 \cup \{0\}} \int_s^\tau
  \Bigl[
   (Y^{kA}_r)'\frac{\pl \td f(\bar Y_r, \bar Z_r)}{\pl Y^{kA}_r}
    + (Y^{kB}_r)'\frac{\pl \td f(\bar Y_r, \bar Z_r)}{\pl Y^{kB}_r} \\
    + Y^{kA}_r \frac{\pl \td f(\bar Y_r, \bar Z_r)}{\pl Z^{kA}_r}
    + Y^{kB}_r \frac{\pl \td f(\bar Y_r, \bar Z_r)}{\pl Z^{kB}_r}
    + \frac{\epsilon^2}2 \dl_k\Bigl(\frac{\pl^2}{\pl (Z^{kA}_r)^2} +
      \frac{\pl^2}{\pl (Z^{kB}_r)^2}\Bigr) \td f(\bar Y_r, \bar Z_r)\Bigr] dr\\
    - \epsilon \hspace{-2mm} \sum_{k\in \Znu_2^+\cup\{0\}, |k| \lt N}
    \int_s^\tau
    \Bigl[\frac{\pl \td f(\bar Y_r, \bar Z_r)}{\pl Z^{kA}_r} \ox e^A_k +
    \frac{\pl \td f(\bar Y_r, \bar Z_r)}{\pl Z^{kB}_r} \ox e^A_k\Bigr]\, dW_r
  }
  where $\dl_k = 1$ if $|k|\lt N$, and $\dl_k = 0$ otherwise.
  Since $f$ is a smooth function on $T\Gr$, all its restrictions
  to the tangent spaces of $\Gr$ are smooth. Hence,
  one can talk about derivatives of $f$ restricted to a tangent space along the vectors
  of this tangent space. Namely, the following relation holds:
  \aa{
  \frac{\pl \td f(\bar Y_r, \bar Z_r)}{\pl Y^{kA}_r} =  f'(\hat Y_r(Z^{t,e}_r)) A_k(Z^{t,e}_r).
  }
  Note that the differentiation of $\td f$ with respect to $Z^{kA}_r$ and $Z^{kB}_r$ can be regarded as 
   the differentiation of the composite function $f\circ \hat Y_r$
     along the vectors $A_k$ and $B_k$. Namely,
     $\frac{\pl \td f(\bar Y_r, \bar Z_r)}{\pl Z^{kA}_r} = A_k(Z^{t,e}_r)[(f\circ \hat Y_r)(Z^{t,e}_r)]$.
  This implies:
   \mmm{
   \label{ito8}
   f(Y^{t,e}_s) - f(\hat h(Z^{t,e}_T)) = -\int_s^T \, dr\,
 \bigl[\, \pl_r (f\circ \hat Y_r)(Z^{t,e}_r)+ \hat Y_r(Z^{t,e}_r)(f\circ \hat Y_r)(Z^{t,e}_r)\\
  +\frac{\epsilon^2}2 \sum_{k\in\Znu^+_2 \cup \{0\}, |k|\lt N} 
  \bigl( A_k(Z^{t,e}_r)A_k(Z^{t,e}_r) +  B_k(Z^{t,e}_r)B_k(Z^{t,e}_r)\bigr)(f\circ \hat Y_r)(Z^{t,e}_r)
  \, \bigr]\\
  -\epsilon \hspace{-2mm}\sum_{k\in\Znu^+_2 \cup \{0\}, |k|\lt N} \int_s^T 
  [ A_k(Z^{t,e}_r)(f\circ \hat Y_r)(Z^{t,e}_r) \ox e^A_k + 
    B_k(Z^{t,e}_r)(f\circ \hat Y_r)(Z^{t,e}_r) \ox e^B_k] dW_r.
   }
 We extended the integration to the entire
 interval $[s,T]$ since the local coordinates no longer appear
 under the integral signs. 
 This is also possible since \eqref{ito5} holds also with respect to 
 the local
 coordinates in the neighborhood $U_eZ^{t,e}_\tau$ and a new
 exit time $\tau_1$. The same argument can be repeated
 with respect to the local coordinates in
 the neighborhood $U_eZ^{t,e}_{\tau_1}$, etc. 
  Let us consider now $f\circ \hat Y_s$ as a time-dependent function of $g\in \Gr$.
  Applying It\^{o}'s formula to $(f\circ \hat Y_s)(Z^{t,e}_s)$
  on the interval $[s,T]$ and
  using SDE \eqref{forward_sde} on $\Gr$,
  we obtain exactly the above identity.
  This proves that $Y^{t,e}_s = \hat Y_s(Z^{t,e}_s)$ is a strong
  solution to \eqref{eqBSDE} on $T\Gr$.
  By results of \cite{Gliklikh1},  $\pl_s \hat Y_s^\ell$ is $\C^1$-smooth.
  Moreover $\mathrm S$, $A_k^{\sc H}$ and  $B_k^{\sc H}$, $k\in \Znu_2^+$, are $\C^\infty$-smooth.
  Again, by results of \cite{Gliklikh1}, the solution of BSDE \eqref{eqBSDE} 
  on $T\Gr$ is unique.
   \end{proof}
  \subsection{The representation of the backward SDE on $TG^\s$}
 Applying Proposition 1.3 (p. 146) of \cite{Belopolskaya} (see also \cite{Gliklikh2}, p. 64) 
 to the manifolds $T\Gr$ and $TG^\s$ and 
 the identical imbedding $\imath_{\sss V}: \, T\Gr \to TG^\s$, we obtain that the process
 $\imath_{\sss V}\bigl(\hat Y_s(Z^{t,e}_s)\bigr)= \hat Y_s(Z^{t,e}_s)$ solves the following
 backward SDE on $TG^\s$:
  \aaa{
 \label{eqBSDE1}
 \begin{split}
  dY^{t,e}_s &= \bar \exp_{Y^{t,e}_s}
  \Bigl \{\pl_s \hat Y_s^{\bar\ell}(Y^{t,e}_s) ds + \bar{\mathrm S}(Y^{t,e}_s)ds \\
    & +  \epsilon \, \hspace{-2mm} \sum_{k\in \Znu_2^+\cup\{0\}, |k| \lt N}
    \bigl[A_k^{\bar{\sc H}}(Y^{t,e}_s)\ox e^A_k + B_k^{\bar{\sc H}}(Y^{t,e}_s)\ox e^A_k\bigr]\, dW_s \Bigr\}, \\
  Y^{t,e}_T & =  \hat h(Z^{t,e}_T)
  \end{split}
  }
 where  $\bar {\mathrm S}$  is the geodesic spray of the Levi-Civita connection
 of the weak Riemannian metric on $G^\s$,
 $\pl_s \hat Y_s^{\bar\ell}$ denotes the vertical lift of
  $\pl_s \hat Y_s$ onto $TTG^\s$, $A_k^{\bar{\sc H}}$ and $B_k^{\bar{\sc H}}$
  denote the horizontal lifts of $A_k$ and $B_k$ onto $TTG^\s$, 
  the process $Z^{t,e}_s$, $s\in [t,T]$, is the solution to \eqref{forward_sde}
  on $G^\s$ with the initial condition $Z^{t,e}_t = e$.
  The exponential map $\bar \exp$ on $TTG^\s$
  is defined similarly to the map $\ovl{\exp}$ on $TT\Gr$. Namely,
  the Levi-Civita connection of the weak Riemannian metric on $G^\s$
  generates a connection on $TG^\s$. The latter
   gives rise to the exponential map $\bar \exp$ on
   $TTG^\s$ as it was described in Paragraph \ref{repres_TG}.
  We actually have obtained the following theorem. 
  \begin{thm}
   Backward SDE \eqref{eqBSDE1} has a unique
   strong solution. Moreover, this solution coincides
   with the unique strong solution to BSDE \eqref{eqBSDE}
   on $T\Gr$, and with the $Y^{t,e}_s$-part of the unique $\mc F_s$-adapted
  solution $(Y^{t,e}_s, X^{t,e}_s)$ to \eqref{bwd_sde}.
  \end{thm}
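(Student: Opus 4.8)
The plan is to recognize that this theorem is the image on $TG^\s$ of Theorem \ref{thm8}, transported through the identical imbedding $\imath_{\sss V}\colon T\Gr\to TG^\s$ in precisely the manner that the forward SDE was carried from $\Gr$ to $G^\s$ in Theorem \ref{existence2}. Existence is already in hand: the paragraph preceding the statement, via Proposition~1.3 of \cite{Belopolskaya}, exhibits $\imath_{\sss V}\bigl(\hat Y_s(Z^{t,e}_s)\bigr)=\hat Y_s(Z^{t,e}_s)$ as a strong solution of \eqref{eqBSDE1}. The structural input there is that $T\imath_{\sss V}\colon TT\Gr\to TTG^\s$ sends the horizontal lifts $A_k^{\sc H},B_k^{\sc H}$, the vertical lift $\pl_s\hat Y_s^{\ell}$, and the spray $\mathrm S$ onto their barred counterparts, and that $\imath_{\sss V}\circ\ovl{\exp}=\bar\exp\circ(T\imath_{\sss V})$, so the Dalecky--Belopolskaya equation \eqref{eqBSDE} transfers verbatim into \eqref{eqBSDE1}.

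It remains to establish uniqueness on $TG^\s$ and to identify the three solutions. For uniqueness I would invoke the uniqueness theorem of Gliklikh (\cite{Gliklikh1}, \cite{Gliklikh2}) for SDEs in Dalecky--Belopolskaya form, exactly as in the proof of Theorem \ref{thm8}. The hypotheses hold: by the results of \cite{Gliklikh1} the spray $\bar{\mathrm S}$ and the horizontal lifts $A_k^{\bar{\sc H}},B_k^{\bar{\sc H}}$ are $\C^\infty$-smooth, while the vertical lift $\pl_s\hat Y_s^{\bar\ell}$ is $\C^1$-smooth under the standing assumption $\pl_s y(s,\fdot)\in H^\s(\Tor,\Rnu^2)$ carried over from Theorem \ref{thm8} (this is the hypothesis ensuring uniqueness there). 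Hence \eqref{eqBSDE1} admits at most one strong solution on $TG^\s$.

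Combining the two, the unique strong solution of \eqref{eqBSDE1} is the process $\hat Y_s(Z^{t,e}_s)$ produced by the imbedding. Since $\imath_{\sss V}$ is the identity on the underlying set, this is literally the same stochastic process as the unique strong solution of \eqref{eqBSDE} on $T\Gr$ furnished by Theorem \ref{thm8}, and by that theorem it equals the $Y^{t,e}_s$-part of the unique $\mc F_s$-adapted solution $(Y^{t,e}_s,X^{t,e}_s)$ to \eqref{bwd_sde}. The main obstacle is the verification hidden in the first step: because the connection on $\Gr$ is the projected connection $\td\nab=\Proj\circ\bar\nab$ rather than the restriction of $\bar\nab$, one must check that $\imath_{\sss V}$ genuinely intertwines the two second-order exponential maps and the horizontal and vertical lift constructions on the iterated tangent bundles; this compatibility is what makes the single process $\hat Y_s(Z^{t,e}_s)$ solve both \eqref{eqBSDE} and \eqref{eqBSDE1} simultaneously, and it rests on the fact, used already in Theorems \ref{existence2} and \ref{thm8}, that all coefficients take values tangent to $\Gr$.
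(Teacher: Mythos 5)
Your proposal matches the paper's own argument: existence comes from the preceding application of Proposition~1.3 of \cite{Belopolskaya} to the imbedding $T\Gr \to TG^\s$, uniqueness is obtained by repeating the Gliklikh-based argument from the proof of Theorem \ref{thm8}, and the identification of the three solutions then follows from Theorem \ref{thm8}. Your closing remark on the compatibility of the projected connection with the lifts and exponential maps is a point the paper leaves implicit in its citation of \cite{Belopolskaya}, but it does not change the route.
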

 \begin{proof}
 We have already shown that the process $\hat Y_s(Z^{t,e}_s)$
 solves BSDE \eqref{eqBSDE1}. The uniqueness of solution
 can be proved in exactly the same way as 
 the uniqueness of solution to \eqref{eqBSDE} on $T\Gr$
 (see the proof of Theorem \ref{thm8}).
  \end{proof}
 \section{Appendix}
\subsection{Geometry of the group of volume-preserving
 diffeomorphisms of the $n$-dimensional torus}
 Let $\mathbb T^n = \underbrace{S^1 \x \dots \x S^1}_n$
 denote the $n$-dimensional torus.
 Let us describe
 a basis of the tangent space $\Alg$ of the group $\Gr$ of
 volume-preserving diffeomorphisms of $\mathbb T^n$.
 We introduce the following notation:
 \aa{
  & \Znu_n^+ = \{(k_1,k_2, \ldots, k_n)\in\Znu^n: k_1 > 0 \; \text{or}
   \; k_1= \cdots = k_{i-1} = 0, \, k_{i}>0, \\
  &\phantom{\Znu_n^+ = (k_1,k_2, \ldots, k_n)\in\Znu^n: k_1 > 0 k_1= \cdots = k_{i-1} = 0, }
    i=2, \dots, n\};  \\
  &k=(k_1,\ldots, k_n)\in \Znu_n^+,  \quad  |k|=\sqrt{\sum_{i=1}^n k_i^2}, \quad
  k\cdot\te = \sum_{i=1}^n k_i\te_i \, ,\\
  &\te = (\te_1, \ldots, \te_n) \in \mathbb T^n, \;
   \nab = \Bigl(\frac{\pl}{\pl\te_1}, \frac{\pl}{\pl\te_2}, \dots, \frac{\pl}{\pl\te_n}\Bigr).
 }
 For every $k\in \Znu_n^+$,
 $(\bar k^1, \ldots, \bar k^{n-1})$ denotes an orthogonal
 system of vectors of length $|k|$
 which is also orthogonal to $k$.
 Introduce the vector fields on $\mathbb T^n$:
 \aa{
 \bar A^i_k = \frac1{|k|^{\s+1}}\cos(k\cdot\te)\, \bar k^i, \;
 \bar B^i_k = \frac1{|k|^{\s+1}}\sin(k\cdot\te)\, \bar k^i, \;
 i=1, \ldots, n-1,\;  k\in \Znu_n^+,
 }
 and the constant vector fields $\bar A^i_0$,
 $i=1, \ldots, n$, whose $i$th
 coordinate is $1$ and the other coordinates are $0$.
 Let $A^i_k, B^i_k$, $i=1, \ldots, n-1$, $k\in \Znu_n^+$,
 denote the right-invariant vector fields on $\Gr$
 generated by  $\bar A^i_k, \bar B^i_k$, $i=1, \ldots, n-1$, $k\in \Znu_n^+$,
 respectively, and let $A^i_0 = \bar A^i_0$, $i=1, \ldots, n$,
 stand for constant vector fields on $\Gr$.
 The following lemma is an analog
 of Lemma \ref{Gsv-hilbert}.
 \begin{lem}
 The vectors $A^i_k(g)$, $B^i_k(g)$, $k\in \Znu_n^+$,
 $i=1, \ldots, n-1$, $g\in\Gr$, $A^i_0$, $i=1, \ldots, n$,
 form an orthogonal basis of
 the tangent space $T_g\Gr$
 with respect to both the weak and the strong inner products
 in $T_g\Gr$.
 In particular, the vectors
 $\bar A^i_k$, $\bar B^i_k$, $k\in \Znu_n^+$,
  $i=1, \ldots, n-1$, $\bar A^i_0$, $i=1, \ldots, n$,
 form an orthogonal basis of the tangent space $\Alg$.
 Moreover, the weak and the strong norms of the basis vectors are bounded
 by the same constant.
 \end{lem}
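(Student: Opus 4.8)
The plan is to follow the proof of Lemma~\ref{Gsv-hilbert} verbatim, replacing the single transversal direction $\bar k$ available in two dimensions by the orthogonal frame $\bar k^1,\dots,\bar k^{n-1}$ of the hyperplane $k^\perp$. First I would reduce everything to the identity $g=e$: since every $g\in\Gr$ is volume-preserving, the weak and strong inner products \eqref{weak} and \eqref{strong} do not depend on $g$, and $A^i_k(g)=\bar A^i_k\circ g$, $B^i_k(g)=\bar B^i_k\circ g$ by construction, so it suffices to establish orthogonality and the norm bounds for the generating fields $\bar A^i_k,\bar B^i_k,\bar A^i_0$ on $\mathbb T^n$. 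I would then check that these fields lie in $\Alg$: each coordinate of $\cos(k\cdot\te)\bar k^i$ is a constant multiple of $\cos(k\cdot\te)$, and $\div(\cos(k\cdot\te)\,\bar k^i)=(\bar k^i,\nab)\cos(k\cdot\te)=-(\bar k^i,k)\sin(k\cdot\te)=0$ because $\bar k^i\perp k$; the same holds for the sine fields, while the $\bar A^i_0$ are trivially divergence-free. By Lemma~\ref{group_lem2} these are divergence-free $H^\s$-vector fields, hence elements of $\Alg$.

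Next I would record the spectral structure. On the flat torus the Laplace--de Rham operator acts componentwise as the scalar Laplacian, so from $\lap\cos(k\cdot\te)=|k|^2\cos(k\cdot\te)$ one gets $\lap\bar A^i_k=|k|^2\bar A^i_k$, $\lap\bar B^i_k=|k|^2\bar B^i_k$, and $\lap\bar A^i_0=0$; consequently $\lap^\s\bar A^i_k=|k|^{2\s}\bar A^i_k$, exactly as in the two-dimensional computation. The $L_2$-orthogonality of distinct generators then follows from two facts: the Fourier orthogonality of $\{\cos(k\cdot\te),\sin(k\cdot\te)\}$ across different $k$ and across cosine versus sine, together with the relation $(\bar k^i,\bar k^j)=|k|^2$ for $i=j$ and $0$ for $i\neq j$, which separates the labels $i$ at a fixed frequency $k$. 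Because every generator is a $\lap$-eigenvector, the strong inner product of two distinct generators is a positive scalar multiple of their $L_2$-inner product (the factor $(1+|k|^{2\s})$, as in Lemma~\ref{Gsv-hilbert}), so $L_2$-orthogonality upgrades to orthogonality in the strong metric. For the bounds I would compute $\|\bar A^i_k\|_{L_2}^2=|k|^{2}|k|^{-2\s-2}\int_{\mathbb T^n}\cos^2(k\cdot\te)\,d\te=\tfrac12(2\pi)^n|k|^{-2\s}$, whence $\|\bar A^i_k\|_\s^2=(1+|k|^{2\s})\|\bar A^i_k\|_{L_2}^2=\tfrac12(2\pi)^n(|k|^{-2\s}+1)$, which lies in $[\tfrac12(2\pi)^n,(2\pi)^n]$ since $|k|\gt 1$; the corresponding sine fields give the same value, and the constant fields give $\|\bar A^i_0\|_0^2=\|\bar A^i_0\|_\s^2=(2\pi)^n$. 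Thus every weak and strong norm is bounded by the single constant $(2\pi)^n$.

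The remaining and, I expect, principal point is completeness --- that this orthogonal system is actually a \emph{basis} of $\Alg$. In two dimensions this was essentially automatic, a divergence-free field on $\mathbb T^2$ being encoded by a scalar stream function, but in $n$ dimensions it requires the Hodge/Helmholtz decomposition. I would expand an arbitrary divergence-free $H^\s$-field $v$ in its (real) Fourier series; the condition $\div v=0$ forces each nonzero-frequency Fourier coefficient to lie in $k^\perp$, and the zero-frequency part to be a constant vector. Since $\{\bar k^1/|k|,\dots,\bar k^{n-1}/|k|\}$ is an orthonormal basis of $k^\perp$ (one such frame may be fixed for each of the countably many $k\in\Znu_n^+$) and $\{\bar A^1_0,\dots,\bar A^n_0\}$ spans the constants, grouping the $\pm k$ modes into their cosine and sine parts shows that $v$ is expanded precisely along $\{\bar A^i_k,\bar B^i_k,\bar A^i_0\}$, the $|k|^{\s+1}$ normalization guaranteeing that the expansion converges in the $H^\s$-topology. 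The genuine work here lies in verifying that this Fourier expansion reproduces exactly the stated generators and converges in the strong topology; once this is in place, orthogonality together with completeness yields the claimed orthogonal basis with respect to both the weak and the strong inner products.
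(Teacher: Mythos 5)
Your proposal is correct and follows essentially the same route as the paper: the paper states this lemma without proof as an analog of Lemma~\ref{Gsv-hilbert}, and your argument is precisely the natural generalization of that two-dimensional proof (eigenvector computation $\lap\bar A^i_k=|k|^2\bar A^i_k$, the factor $(1+|k|^{2\s})$ relating weak and strong inner products, and the explicit norm bounds, which correctly reduce to $2\pi^2\lt\|\bar A_k\|^2_\s\lt 4\pi^2$ when $n=2$). You in fact go slightly beyond the paper by spelling out completeness via the Fourier expansion of divergence-free fields, a point the paper leaves implicit even in the two-dimensional case.
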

 The other lemmas of Section \ref{Geometry2D} hold
 in the $n$-dimensional case, with respect
 to the system
 $A^i_k$, $B^i_k$, $k\in \Znu_n^+$,
  $i=1, \ldots, n-1$, $A^i_0$, $i=1, \ldots, n$, without changes.
  The index $\s$ of the Sobolev space $H^\s$  is an integer bigger than
  $\frac{n}2 + 1$.
 \subsection{The Laplacian of a right-invariant vector field on $G^\s(\mathbb T^n)$}
 One of the most important steps in the proof of Theorems \ref{thm2} and \ref{converse}
 is Lemma \ref{lapl}, i.e. the computation of the Laplacian
 of a right-invariant vector field on $G^\s$ with respect to the subsystem
 $\{A_k,B_k\}_{k\in\Znu_2^+\cup\{0\}, |k|\lt N}$ where $N$
 can be fixed arbitrary.
 Below we prove an $n$-dimensional analog of this lemma.
 \begin{lem}
 Let $\hat V$ be the right-invariant vector field on
 $G^{\td \al}(\mathbb T^n)$ generated by an $H^{\td{\al}+2}$-vector field $V$
 on $\mathbb T^n$. Further let $\epsilon > 0$ be such that
 \aa{
  \frac{\epsilon^2}2 \Bigl(1+\frac{n-1}{n}\hspace{-2mm}\sum_{k\in\Znu^+_n, |k|\lt N}\frac1{|k|^{2\s}}\Bigr) = \nu.
 } 
  Then for all $g\in G^{\td\al}$,
  \aa{
   \frac{\epsilon^2}2 \,\Bigl[\sum_{k\in \Znu_n^+, |k| \lt N}
   \sum_{i=1}^{n-1}
   \bigl( \bar\nab_{A_k^i}\bar\nab_{A_k^i}+
   \bar\nab_{B_k^i}\bar\nab_{B_k^i}
   \bigr) +
   \sum_{i=1}^n \bar\nab_{A_0^i}\bar\nab_{A_0^i}\Bigr]\, \hat V(g)
   = \nu \,\lap V  \circ g.
  }
 \end{lem}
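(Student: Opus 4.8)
The plan is to mimic the two-dimensional computation of Lemma~\ref{lapl} line by line, replacing the elementary pairing of $k$ with $\bar k$ used there by a symmetrization over the full lattice $\Znu^n$. First, by the right-invariance of the second-order fields $\bar\nab_{A_k^i}\bar\nab_{A_k^i}\hat V$ and $\bar\nab_{B_k^i}\bar\nab_{B_k^i}\hat V$ (Lemma~\ref{lem5'}), it suffices to prove the identity at $g=e$. Since every $\bar k^i$ is orthogonal to $k$, we have $(\bar k^i,\nab)\cos(k\cdot\te)=(\bar k^i,\nab)\sin(k\cdot\te)=0$, so the intermediate cosine and sine factors are annihilated by the directional derivative and the product rule collapses exactly as in Lemma~\ref{lapl}. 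This yields, for each $k\in\Znu_n^+$ and each $i=1,\dots,n-1$,
\[
(\bar\nab_{A_k^i}\bar\nab_{A_k^i}+\bar\nab_{B_k^i}\bar\nab_{B_k^i})\hat V(e)(\te)=\frac1{|k|^{2\s+2}}(\bar k^i,\nab)^2 V(\te).
\]

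Next I would sum over $i$. Because $\{k/|k|,\bar k^1/|k|,\dots,\bar k^{n-1}/|k|\}$ is an orthonormal basis of $\Rnu^n$, the orthogonal expansion applied to the formal vector $\nab$ gives $\sum_{i=1}^{n-1}(\bar k^i,\nab)^2=|k|^2\lap-(k,\nab)^2$, which moreover does not depend on the particular orthogonal complement chosen. Hence
\[
\sum_{i=1}^{n-1}(\bar\nab_{A_k^i}\bar\nab_{A_k^i}+\bar\nab_{B_k^i}\bar\nab_{B_k^i})\hat V(e)=\frac1{|k|^{2\s}}\lap V-\frac1{|k|^{2\s+2}}(k,\nab)^2 V.
\]

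The crux is then to evaluate $\sum_{k\in\Znu_n^+,|k|\lt N}|k|^{-2\s-2}(k,\nab)^2$. The summand is invariant under $k\mto -k$, so this sum equals one half of the corresponding sum over all nonzero $k\in\Znu^n$ with $|k|\lt N$; and, unlike $\Znu_n^+$, this full truncated lattice is stable under every coordinate permutation and every sign change. Expanding $(k,\nab)^2=\sum_{a,b}k_ak_b\,\pl_a\pl_b$, the off-diagonal contributions with $a\ne b$ cancel under the reflection $k_a\mto -k_a$, while permutation invariance makes $\sum_k|k|^{-2\s-2}k_a^2$ independent of $a$ and therefore equal to $\frac1n\sum_k|k|^{-2\s}$. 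Consequently
\[
\sum_{k\in\Znu_n^+,|k|\lt N}\frac1{|k|^{2\s+2}}(k,\nab)^2=\frac1n\sum_{k\in\Znu_n^+,|k|\lt N}\frac1{|k|^{2\s}}\,\lap,
\]
so the non-constant fields contribute $\frac{n-1}{n}\sum_{k}|k|^{-2\s}\,\lap V$. The constant fields $\bar A_0^i$ are the standard unit vectors, whence $\sum_{i=1}^n\bar\nab_{A_0^i}\bar\nab_{A_0^i}\hat V(e)=\sum_{i=1}^n\pl_i^2V=\lap V$. Adding the two contributions, multiplying by $\frac{\epsilon^2}2$, and invoking the prescribed relation $\frac{\epsilon^2}2(1+\frac{n-1}{n}\sum_k|k|^{-2\s})=\nu$ finishes the computation at $g=e$, and right-invariance promotes it to arbitrary $g$. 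The main obstacle is precisely this symmetrization step: $\Znu_n^+$ is only a fundamental domain for $k\mto -k$ and is not stable under the hyperoctahedral group, so one must first pass to the ambient symmetric lattice before reflection and permutation invariance can be exploited, and this is the genuine $n$-dimensional generalization of the $k$-versus-$\bar k$ pairing used in Lemma~\ref{lapl}.
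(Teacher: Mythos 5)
Your proposal is correct and follows essentially the same route as the paper's proof: reduction to $g=e$ by right-invariance, the identity $(\bar k^i,\nab)\cos(k\cdot\te)=(\bar k^i,\nab)\sin(k\cdot\te)=0$, the orthonormal-frame identity $\sum_{i=1}^{n-1}(\bar k^i,\nab)^2+(k,\nab)^2=|k|^2\lap$, and the symmetrization over the full lattice $\Znu^n$ (with the factor $\tfrac12$) to cancel the off-diagonal terms and evaluate the diagonal ones by permutation invariance. The only cosmetic difference is that you cancel the cross terms by a single coordinate reflection where the paper groups the lattice points into octuples, but the underlying symmetry argument is the same.
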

 \begin{proof}
 As it was mentioned in the proof of Lemma \ref{lem5'},
 it suffices to consider the case $g=e$.
 We observe that for all $i=1, \ldots, n-1$,
 \aa{
  (\bar k^i,\nab)\cos(k\cdot \te) = -\sin(k\cdot \te)(\bar k^i, k) = 0.
 }
  Similarly, $(\bar k^i,\nab)\sin(k\cdot \te) = 0$.
 Then, for $k\in \Znu_n^+$, $\te\in\mathbb T^n$,
  \aa{
  \sum_{i=1}^{n-1} \bar\nab_{A_k^i}\bar\nab_{A_k^i}\hat V(e)(\te) 
 = \frac1{|k|^{2\s+2}}
  \sum_{i=1}^{n-1}
   \cos(k\cdot \te)(\bar k^i,\nab)\bigl[
   \cos(k\cdot \te)(\bar k^i,\nab)V(\te)\bigr] \\
   =   \frac1{|k|^{2\s+2}} \cos(k\cdot \te)^2
   \sum_{i=1}^{n-1} (\bar k^i,\nab)^2 V(\te)
   = \frac1{|k|^{2\s+2}} \cos(k\cdot \te)^2 (|k|^2\lap - (k,\nab)^2) V(\te).
  }
  The latter equality holds by the identity
  $
  \sum_{i=1}^{n-1} (\bar k^i,\nab)^2  + (k,\nab)^2 = |k|^2\lap
  $
  that follows, in turn, from the fact that the system
  $\bigl\{\frac{\bar k^i}{|k|}, \frac{k}{|k|}\bigr\}$,
  $i=1, \ldots, n-1$, forms an orthonormal basis of $\Rnu^n$.
  Similarly,
  \aa{
\sum_{i=1}^{n-1} \bar\nab_{B_k^i}\bar\nab_{B_k^i}\hat V(e)(\te)= 
  \frac1{|k|^{2\s+2}} \sin(k\cdot \te)^2(|k|^2\lap - (k,\nab)^2) V(\te).
  }
  Hence, for each $k\in\Znu_n^+$,
  \aaa{
  \label{summ}
   \sum_{i=1}^{n-1}
(\bar\nab_{A_k^i}\bar\nab_{A_k^i}+ \bar\nab_{B_k^i}\bar\nab_{B_k^i})\hat V(e)(\te)
  = \frac1{|k|^{2\s+2}} (|k|^2\lap - (k,\nab)^2) V(\te).
  }
 Further we have:
 \mm{
 \sum_{k\in\Znu_n^+, |k|\lt N}  \frac1{|k|^{2\s+2}}(k,\nab)^2
 = \frac12 \sum_{k\in\Znu_n, |k|\lt N}  \frac1{|k|^{2\s+2}}(k,\nab)^2 \\
 = \frac12 \sum_{k\in\Znu_n, |k|\lt N}  \frac1{|k|^{2\s+2}}
  \sum_{i=1}^{n} k_i^2 \pl_i^2 +
   \sum_{k\in\Znu_n, |k|\lt N}  \frac1{|k|^{2\s+2}} \sum_{i\ne j} k_ik_j \pl_i\pl_j
 }
  where $\pl_i = \frac{\pl}{\pl \te_i}$, and due to the factor $\frac12$
  we perform the summation over all $k\in\Znu_n$.
  Clearly, the second sum is zero. To show this,
  we have to specify the way of summation.
  Let us collect in a group the terms $k_i k_j \pl_i\pl_j$ attributed
  to those $k\in \Znu_n$ whose coordinates except the $i$th and the $j$th
  coincide, while the $i$th and the $j$th coordinates
  satisfy the following rules: they are obtained from $k_i$ and $k_j$
  attributed to one of the vectors of the group by means of an arbitrary
  assignment of a sign. This operation specifies four vectors.
  The other four vectors are obtained from the first four vectors
  of the group by means of the permutation of the $i$th and
  the $j$th coordinates.
  In total, we get eight vectors in the group.
  Clearly, the summands $k_i k_j \pl_i\pl_j$ attributed to these vectors cancel each other.
  Let us compute the first sum.
  \aa{
  \sum_{k\in\Znu_n, |k|\lt N}  \frac1{|k|^{2\s+2}}
  \sum_{i=1}^{n} k_i^2 \pl_i^2
  = \sum_{i=1}^{n}\, \Bigl[\sum_{k\in \Znu_n, |k|\lt N} \frac1{|k|^{2\s+2}}
  \, k_i^2 \Bigr] \, \pl_i^2.
  }
  Note that
  \aa{
  \sum_{k\in \Znu_n, |k|=const}
  \, k_1^2 = \cdots = \sum_{k\in \Znu_n, |k|=const}
  \, k_n^2 =  \frac1{n} \sum_{k\in \Znu_n, |k|=const}|k|^2.
  }
  This implies:
  \aa{
  \sum_{k\in\Znu_n, |k|\lt N}  \frac1{|k|^{2\s+2}}
  \sum_{i=1}^{n} k_i^2 \pl_i^2 = \frac1{n}
  \sum_{k\in\Znu_n, |k|\lt N} \frac1{|k|^{2\s}}\, \lap
  = \frac2{n} \sum_{k\in\Znu_n^+, |k|\lt N} \frac1{|k|^{2\s}}\, \lap.
  }
  Together with \eqref{summ} it gives:
  \mm{
  \sum_{k\in\Znu_n^+,|k|\lt N}
  \sum_{i=1}^{n-1}
(\bar\nab_{A_k^i}\bar\nab_{A_k^i}+ \bar\nab_{B_k^i}\bar\nab_{B_k^i})\hat V(e)(\te)
 =\frac{n-1}{n}\sum_{k\in\Znu_n^+, |k|\lt N}\frac1{|k|^{2\s}}\, \lap  V(\te).
  }
  We also have to take into consideration the term
  \aa{
  \sum_{i=1}^{n} \bar\nab_{A_0^i}\bar\nab_{A_0^i}\hat V(e)(\te) = \lap V(\te).
  }
  Finally, we obtain:
  \aa{
  \Bigl[
  \sum_{k\in\Znu_n^+, |k|\lt N} \sum_{i=1}^{n-1}
(\bar\nab_{A_k^i}\bar\nab_{A_k^i}+ \bar\nab_{B_k^i}\bar\nab_{B_k^i})
  + \sum_{i=1}^{n}
\bar\nab_{A_0^i}\bar\nab_{A_0^i}
\Bigr]\,
\hat V(e)(\te)  \\
  = \Bigl(1 +
  \frac{n-1}{n}\sum_{k\in\Znu_n^+, |k|\lt N}\frac1{|k|^{2\s}} \Bigr)
  \,\lap V(\te).
  }
  The lemma is proved.
 \end{proof}

\section*{Acknowledgements}
We would like to thank the referee for meaningful questions.
This work was supported by the Portuguese Foundation for Science
and Technology (FCT) under the projects PTDC/MAT/69635/06
and SFRH/BPD/48714/2008.

\end{document}